\newcommand{\N}{\ensuremath{\mathbb{N}}}
\newcommand{\R}{\ensuremath{\mathbb{R}}}
\newcommand{\e}{e}
\newcommand{\diver}{\mathop{\mathrm{div}}}
\newcommand{\dotdot}{\mathbin :}
\newcommand{\norm}[1]{\lVert #1 \rVert}
\newcommand{\V}[1]{\boldsymbol{#1}}
\newcommand{\T}[1]{\boldsymbol{#1}}
\renewcommand{\d}{d}
\renewcommand{\L}{\boldsymbol{\mathsf L}}
\newcommand{\Ell}{\mathscr{L}}
\newtheorem{theorem}{Theorem}[section]
\newtheorem{definition}[theorem]{Definition}
\newtheorem{lemma}[theorem]{Lemma}
\newtheorem{proposition}[theorem]{Proposition}
\newtheorem{remark}[theorem]{Remark}
\title{On the free rotations of rigid bodies with a liquid-filled gap}
\author{Giusy Mazzone}
\date{}
\address{\small Department of Mathematics and Statistics
\\
Queen's University
\\
Kingston, ON K7L 3N6}
\ead{giusy.mazzone@queensu.ca}
\def\ps@pprintTitle{%
   \let\@oddhead\@empty
   \let\@evenhead\@empty
   \def\@oddfoot{\reset@font\hfil\thepage\hfil}
   \let\@evenfoot\@oddfoot
}
\begin{document}
\begin{frontmatter}
%
%
\begin{abstract}
We consider the system constituted by a hollow rigid body whose cavity contains a homogeneous rigid ball, and let the gap between the solids be entirely filled by a viscous incompressible fluid. We investigate the free rotations of the whole system, i.e., motions driven only by the inertia of the fluid-solids system once an initial angular momentum is imparted on the whole system. We prove the existence of global weak solutions and local strong solutions to the equations of motion. In addition, we prove that the fluid velocity as well as the inner core angular velocity relative to the outer solid converge to zero as time approaches infinity. 
\end{abstract}

\begin{keyword}
Fluid-solid interactions \sep Navier-Stokes equations \sep rigid body motion \sep Leray-Hopf weak solutions \sep global existence \sep critical spaces \sep strong solutions


\MSC[2010] 35Q35 \sep 35Q30 \sep 35B40 \sep 35K58 \sep 76D05 \sep 35Q86
\end{keyword}
\end{frontmatter}
\section{Introduction}
Consider the system constituted by a hollow rigid body $\mathcal B_1$ whose cavity contains a homogeneous rigid ball $\mathcal B_2$. Let the gap between $\mathcal B_1$ and $\mathcal B_2$ be entirely filled by a viscous incompressible fluid $\Ell$ (simply called {\em liquid}). Let $G$ be the center of mass of the system $\mathscr S_C$ constituted by the outer rigid body $\mathcal{B}_1$ and the liquid. Suppose that $G$ is a fixed point in space and time with respect to an inertial frame  of reference $\mathcal I$, and it coincides with the (geometrical) center of the ball $\mathcal B_2$. \footnote{ The geometrical center of the ball is also its center of mass due to the homogeneity and geometrical symmetry of $\mathcal B_2$. }  We are interested in the {\em free rotations} of the whole system of {\em rigid bodies with a  liquid-filled gap}. This type of motion occurs when no external forces and torques are applied, and the system is constrained to rotate (without friction) around $G$ driven by only its inertia once an initial angular momentum is imparted, see Figure \ref{fig:liquid_gap}. 
\begin{figure}[h]
\centering
\psfrag{1}{$\mathcal B_2$}
\psfrag{2}{$\Ell$}
\psfrag{3}{$\mathcal B_1$}
\psfrag{G}{\footnotesize $G$}
\psfrag{o}{$\V \omega_{20}$}
\psfrag{a}{$\V \omega_{10}$}
\includegraphics[width=.5\textwidth]{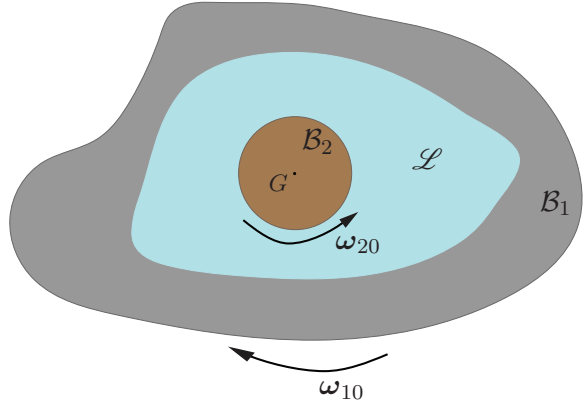}
\caption{Initial configuration for a system of rigid bodies with a liquid-filled gap. In this pictorial situation, the motion of the whole system is driven by the initial velocity imparted on the liquid and the initial angular velocities $\V \omega_{10}$ and $\V \omega_{20}$ of $\mathcal B_1$ and $\mathcal B_2$, respectively. }\label{fig:liquid_gap}
\end{figure} 

This type of fluid-solid interaction problems have been widely studied in connection to some geophysical problems related to the motion of the Earth's inner (solid and liquid) core and its influence on the geodynamo (i.e., the mechanism responsible for the generation of Earth's magnetic field and its maintenance against the Ohmic dissipation), see \cite{Ka,Proud,Bu1,Bu4,ScCDV,Va2004}. From the mathematical point of view, there have been several contributions aimed at proving the existence of solutions to the relevant equations of motion and analyzing their stability properties. In the case where no rigid core is within the liquid-filled cavity, it was conjectured by Zhukovskii (\cite{Zh}) and rigorously proved by the present author and collaborators  that the liquid has a {\em stabilizing effect} on the motion of the solid (see \cite{Ma,Ma2,DiGaMaZu,MaPrSi19,MaPrSi}). In fact, there exists a finite interval of time (whose length depends on the liquid viscosity) where the motion of the system has a “chaotic” nature (as shown numerically in \cite{DiGaMaZu} and experimentally in \cite{Ma2}). After this interval of time, the system reaches (at an exponentially fast rate) a more orderly configuration, corresponding to a steady state in which the system moves as a whole rigid body with a constant angular velocity (see \cite{MaPrSi19,MaPrSi} for a rigorous mathematical proof of this phenomena when the liquid is subject to no-slip and partial slip boundary conditions, respectively). 

Concerning the motion of solids with fluid-filled gaps, known results mainly focus on the translational and rotational motions of rigid bodies in a liquid occupying a bounded domain with a {\em prescribed motion of the liquid outer boundary}. The works \cite{sather,FuSa,Sau} provide the first results of existence of weak solutions \`a la Leray-Hopf to the Navier-Stokes equations in bounded regions with moving boundaries. For the fluid-solid interaction problems with a finite number of rigid bodies within a liquid, existence of weak solutions {\em up to collisions} are proved in \cite{DeEs,CoMaTu}. The work \cite{GrMa} deals with local strong solutions, whereas \cite{Tuc,gunzburger,Fe2003,Fei2003} provide the first results of existence of global weak solutions for both incompressible and compressible cases. We refer also to \cite{Hillairet,Hillairet2009,Hillairet2015,Chemetov2017}
where different boundary conditions and regularity of the boundary are considered for the global existence theory, and to \cite{Sueur2015} where uniqueness of Leray-Hopf solutions in the 2D case is proved.  

In this paper, we show that the problem of free rotations of rigid bodies with a liquid-filled gap admits global weak solutions \`a la Leray-Hopf. In addition, we determine the largest space of initial data for which the equations of motion are well-posed in the setting of maximal $L^p-L^q$ regularity and time-weighted $L^p$ spaces. It is worth emphasizing that for the problem at hand, possible translations of the solids are disregarded. This simplifying assumption has to be contrasted with the existing (cited above) literature in which the motion of the outer solid is instead prescribed. The novelty of the paper lies on considering the full moving boundary problem\footnote{Note that different portions ($\mathcal C$ and $\mathcal S$) of the liquid boundary move with different (unknown) motion. } and proving the existence of weak solutions (Theorem \ref{th:weak}) together with  important properties like a Serrin-type result for weak-strong uniqueness (Theorem \ref{th:continuous_dependence}). One of the main objective of this work is to show that, similarly to the case when no solid is within the liquid-filled cavity, the fluid has a {\em stabilizing effect on the motion of both solids}. In fact, it will be shown that the long-time dynamics\footnote{The long-time behavior of solutions to the governing equations in the Leray-Hopf class for any initial data with finite kinetic energy.} of the whole system is completely characterized by the rest state for the liquid and solid cores relatively to the outer solid, and the system moving as a whole rigid body (see \eqref{eq:decay0}). Such stabilization property is obtained for a large class of fluid-solids configurations. In particular, no restriction will be imposed on the initial data or on physical properties like the Reynolds number, the mass distribution in the outer solid or on the size of the inner core. As an example, take the situation depicted in Figure \ref{fig:liquid_gap} as initial configuration. Note that the initial angular velocities of the solids are in opposite directions (in fact, they could be around any axis). If there was no liquid in the gap between the solids, the motion of the thick crust and inner core would be completely decoupled. When a viscous incompressible fluid fills that gap, the eventual motion of the whole system will be a rigid body motion with crust and inner core at relative rest. From the mathematical point of view, this effect is captured by introducing the new variable $\V \omega$ (see \eqref{eq:new_variables}), the equivalent formulation \eqref{eq:Motion} and by proving the decay \eqref{eq:decay0}. Finally, we prove a local well-posedness result (Theorem \ref{th:strong}) in the functional setting of maximal $L^p-L^q$ regularity in time-weighted $L^p$ spaces. This result is the first of the kind for this class of fluid-solid interactions. 

Here is the plan of the paper. After presenting the basic notation and recalling a well-known Gr\"onwall-type lemma, we proceed with Section \ref{sec:preliminary_formulation} containing the mathematical formulation of the problem as the coupled system of differential equations \eqref{eq:motion}, given by the Navier-Stokes equations and the balances of angular momentums of $\mathcal B_1$ and $\mathcal B_2$, respectively. In Section \ref{sec:functional}, we introduce our functional setting. As equations \eqref{eq:motion} involve both differential and integral terms, in Section \ref{sec:equivalent_formulation}, we provide an equivalent formulation of the problem by replacing the (physical) equations of motion with those governing the motion of a rigid body with a cavity completely filled by a viscous impressible fluid with varying density. In Section \ref{sec:weak}, we prove the existence of weak solutions and related properties. In Section \ref{sec:strong}, we demonstrate the existence of strong solution in the $L^p-L^q$ setting. 

The notation used throughout this paper is quite standard. $\N$ denotes the set of natural numbers. $\R$ indicates the set of real numbers, and $\R^n$ the Euclidean $n$-dimensional space equipped with the canonical basis  $\{\V e_1, \V e_2,\dots, \V e_n\}$. The components of a vector $\V{\mathsf v}$ with respect to the canonical basis are indicated by $(\mathsf v_1,\mathsf v_2,\dots,\mathsf v_n)$, whereas $|\V{\mathsf  v}|$ represents the magnitude of $\V{\mathsf v}$. We will use the Einstein convention for the summation of dummy indexes, and ``$:$'' will denote the tensor contraction. Moreover,  $B_R(G)$ denotes the ball  in $\R^3$ with center at a point $G\in \R^3$ and radius $R$. The ball centered at the origin of a coordinates system $\{\V e_1, \V e_2, \V e_3\}$ will be simply denoted by $B_R$. 
 
If $A$ is an open set of $\R^n$, $s\in \R$ and $p\in [1,\infty]$, then $L^p(A)$, $W^{s,p}(A)$, $W^{s,p}_0(A)$ denote the Lebesgue and (generalized) Sobolev spaces, with norms $\norm{\cdot}_{L^p(A)}$ and $\norm{\cdot}_{W^{s,p}(A)}$, respectively\footnote{Unless confusion arises, we shall use the same symbol for spaces of scalar, vector and tensor functions. }. 

For a bounded, Lipschitz domain $A$, with outward unit normal $\V n$,  we will often use the following well-known Helmholtz-Weyl decomposition (e.g., \cite[Section III.1]{Ga}): 
\begin{equation}\label{eq:HW}
L^q(A) = H_q(A) \oplus G_q(A), 
\end{equation}
where $q\in (1,\infty)$, $H_q(A):=\{\V u\in L^q(A):\; \diver \V u=0\text{ in }A\text{, and } \V u\cdot \V n=0 \text{ on }\partial A\}$ ($\diver \V u$ and $\V u \cdot \V n$ have to be understood in the sense of distributions), and $G_q(A):=\{w \in L^q(A):\; w=\nabla \pi, \text{ for some }\pi\in W^{1,q}(A)\}$. In the case of $q=2$, we will simply write $H(A)$ and $G(A)$, respectively. 

If $(X, \norm{\cdot}_X)$ is a Banach space, for an interval $I$  in $\R$ and $1\le p<\infty$, $L^p(I;X)$ (resp. $W^{k,p}(I;X)$, $k\in \N$) will denote the space of functions $f$ from $I$ to $X$ for which $\left(\int_I \norm{f(t)}^p_X\; \d t\right)^{1/p}<\infty$ (resp. $\sum^k_{\ell=0}\left(\int_I \norm{\partial^\ell_t f (t)}^p_X\; \d t\right)^{1/p}<\infty$). Similarly, $C^k(I;X)$ indicates the space of functions which are $k$-times differentiable with values in $X$, and having $\max_{t\in I}\norm{\partial^\ell_t \cdot}_X < \infty$, for all $\ell = 0,1,...,k$. Finally, $C_w(I;X)$ is the space of functions $f$ from $I$ to $X$ such that  that the map $t \in I \mapsto \phi(f(t))\in \R$ is continuous for all bounded linear functionals $\phi$ defined on $X$. 

We conclude this section by recalling the following Gr\"onwall-type lemma that will be used in the paper. For its proof, we refer the interested reader to \cite{Ma2}. 
\begin{lemma}\label{lem:gronwall1}
Suppose that a function $y\in L^\infty(0,\infty)$, $y\ge 0$, satisfies the following inequality for a.~a. $s\ge 0$ and all $t\ge s$: 
\begin{equation*} 
y(t)\le y(s)-k\int_s^ty(\tau)\,d\tau+\int_s^tF(\tau)\,d\tau\,.
\end{equation*}
Here, $k>0$, and $F\in L^q(a,\infty)\cap L^1_{{\rm loc}}(0,\infty)$, for some $a>0$ and $q\in [1,\infty)$, satisfies $F(t)\ge 0$ for a.~a. $t\ge 0$. Then
$$
\lim_{t\to\infty}y(t)=0\,.
$$ 
If $F\equiv 0$, then 
$$
y(t)\le y(s)\,{\rm e}^{-k(t-s)}
\,,\ \ \mbox{for all $t\ge s$}\,.
$$
\end{lemma}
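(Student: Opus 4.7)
The plan splits by the presence of the forcing. For the autonomous case $F\equiv 0$, the integral inequality $y(t)+k\int_s^t y(\tau)\,d\tau\leq y(s)$ forces $y$ to be essentially non-increasing, since dropping the non-negative integral term yields $y(t)\leq y(s)$ for a.a.\ $s$ and all $t\geq s$. Replacing $y$ by its non-increasing representative $\tilde y(t):=\esssup_{\tau\geq t}y(\tau)$, I may assume $y$ itself is monotone non-increasing on $[0,\infty)$. For any fixed $t>s$ I would partition $[s,t]$ by good points $s=t_0<t_1<\cdots<t_N=t$ with step $\Delta:=(t-s)/N$ (the full-measure availability of good points makes this possible with arbitrary accuracy). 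Monotonicity gives the pointwise lower bound $\int_{t_i}^{t_{i+1}}y\geq y(t_{i+1})\Delta$, so the inequality applied on $[t_i,t_{i+1}]$ yields $y(t_{i+1})(1+k\Delta)\leq y(t_i)$; iterating produces $y(t)\leq y(s)(1+k(t-s)/N)^{-N}$, and sending $N\to\infty$ delivers the advertised bound $y(t)\leq y(s)e^{-k(t-s)}$.

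For the general case the key intermediate step is the variation-of-parameters-type inequality
\[
y(t)\leq y(s)\,e^{-k(t-s)}+\int_s^t e^{-k(t-\tau)}F(\tau)\,d\tau\qquad (\star)
\]
valid for a.a.\ $s$ and all $t\geq s$. I would establish $(\star)$ via a temporal mollification argument: convolve $y$ with a standard smoothing kernel $\phi_\epsilon$ in $t$ and show that the regularization $y_\epsilon$ satisfies the pointwise differential inequality $y_\epsilon'+ky_\epsilon\leq F_\epsilon$ (inherited from the integral version by differentiating in $t$), multiply by the integrating factor $e^{kt}$, integrate on $[s,t]$ to obtain $(\star)$ with $(y_\epsilon,F_\epsilon)$, and pass to the limit $\epsilon\to 0^+$ using $y\in L^\infty$ and $F\in L^1_{\mathrm{loc}}\cap L^q(a,\infty)$.

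Granted $(\star)$, fix a good $s\geq a$. The first term $y(s)e^{-k(t-s)}$ tends to zero as $t\to\infty$, while for the convolution term H\"older's inequality with exponents $q,\,q'=q/(q-1)$ (the endpoint $q=1$ is trivial since $e^{-k(t-\tau)}\leq 1$) yields
\[
\int_s^t e^{-k(t-\tau)}F(\tau)\,d\tau\leq\Bigl(\int_s^t e^{-kq'(t-\tau)}\,d\tau\Bigr)^{\!1/q'}\|F\|_{L^q(s,t)}\leq (kq')^{-1/q'}\|F\|_{L^q(s,\infty)}.
\]
Therefore $\limsup_{t\to\infty}y(t)\leq (kq')^{-1/q'}\|F\|_{L^q(s,\infty)}$, which vanishes as $s\to\infty$ along good points since $F\in L^q(a,\infty)$; combined with $y\geq 0$ this gives $\lim_{t\to\infty}y(t)=0$. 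The principal technical obstacle is the rigorous derivation of $(\star)$ for a merely $L^\infty$ function $y$ (the monotonicity-based discretization from the autonomous case is unavailable when $F\not\equiv 0$), and one must be careful to preserve the ``for a.a.\ $s$'' clause of the hypothesis when extracting a classical differential inequality for $y_\epsilon$ from the integral one.
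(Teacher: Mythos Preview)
The paper does not actually prove this lemma; it merely states the result and refers the reader to \cite{Ma2}, so there is no in-paper argument to compare against.

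Your proof is correct. Two small comments. In the $F\equiv 0$ case, the partition points $t_i$ must be chosen from the full-measure ``good'' set and hence cannot be exactly equispaced, but since $\sum_i\log(1+k\Delta_i)\to k(t-s)$ whenever $\sum_i\Delta_i=t-s$ and $\max_i\Delta_i\to 0$, the limit is unaffected. In the general case your Duhamel bound $(\star)$ together with the H\"older/tail argument is exactly the right endgame; to make the mollification step rigorous, the cleanest route is to note that $G(t):=y(t)+k\int_0^t y-\int_0^t F$ is essentially non-increasing by hypothesis, so its distributional derivative is a non-positive measure, and convolving with a non-negative kernel then yields $y_\epsilon'+ky_\epsilon\le F_\epsilon$ pointwise. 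Alternatively, you can bypass mollification altogether by running your discretization with $F$ present: the hypothesis applied at intermediate $\tau\in[t_i,t_{i+1}]$ gives $y(\tau)\ge y(t_{i+1})-\int_\tau^{t_{i+1}}F$, hence $y(t_{i+1})(1+k\Delta_i)\le y(t_i)+(1+k\Delta_i)\int_{t_i}^{t_{i+1}}F$, and iterating and refining produces $(\star)$ directly.
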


We are now ready to introduce the equations governing the motion of the system of rigid bodies with a liquid-filled gap. 
 
\section{A preliminary mathematical formulation of the problem}\label{sec:preliminary_formulation}

Consider $\mathcal B_1:=\mathcal V_1\setminus \overline{\mathcal V}$, with $\mathcal V_1$ and $\mathcal V$  bounded domains in $\R^3$, $\overline{\mathcal V}\subset \mathcal V_1$, $\overline{B_R(G)}\subset \mathcal V$, and $\mathcal B_2:=B_R(G)$. Let us denote $\mathcal C:=\partial \mathcal V$, $\mathcal S:=\partial \mathcal B_2$~\footnote{$\mathcal S$ is the sphere in $\R^3$ centered at $G$ with radius $R$.}, and ${\Ell}:=\mathcal V\setminus \overline{B_R(G)}$ be the volume occupied by the liquid at each time. Throughout the paper, we will assume that ${\Ell}$ is of class $C^2$. 

Let $\mathcal{F}\equiv\{G, \V e_1,\V e_2,\V e_3\}$ be the {\em non-inertial} reference frame with origin at $G$, and axes coinciding with the {\em central  axes of inertia} of the coupled system $\mathscr S_C$; these axes are directed along the eigenvectors of the inertia tensor $\T I_{C}$ of $\mathscr S_C$ with respect to $G$, and with corresponding (positive and time-independent) eigenvalues $\lambda_1$, $\lambda_2$, and $\lambda_3$ (also called {\em central moment of inertia}). Let us denote by $\T I_{\mathcal B}$ the inertia tensor of the rigid body $\mathcal B_1$ with respect to $G$. Since $\mathcal B_2$ is a homogeneous rigid ball with center at $G$, then any axis passing through its center is also a central axis of inertia. Thus, the inertial tensor of $\mathcal B_2$ with respect to $G$ is simply  
$\lambda (\V e_1\otimes \V e_1+\V e_2\otimes \V e_2+\V e_3\otimes \V e_3)$ with $\lambda=2/5\; mR^2$, and $m$ the mass of the rigid ball. With respect to the reference frame $\mathcal F$, all the volumes considered above are  time-independent. 

The following system of differential equations describes  the dynamics of the given system 
in the reference frame $\mathcal F$ 
\footnote{We refer to 
\cite{Ma}, and \cite{Ma2} for more details about this kind of formulation obtained  
for similar problems in liquid-solid interactions. }. 
\begin{equation}\label{eq:motion}
\begin{aligned}
&\left.\begin{split}
&\rho\left(\frac{\partial \V u}{\partial t}+\V v\cdot \nabla \V u+\V \omega_1\times \V u\right)
=\diver \T T(\V u,p)
\\
&\diver \V u=0
\end{split}\right\}&&\text{ on }\Ell\times (0,\infty),
\\
&\T I_{\mathcal B}\cdot\dot{\V \omega}_1 +\V \omega_1\times \T I_{\mathcal B}\cdot \V \omega_1
=-\int_{ \mathcal C}\V x\times \T T(\V u,p)\cdot \V n\; \d \sigma \qquad &&\text{ in }(0,\infty),
\\
&\lambda(\dot{\V \omega}_2 +\V \omega_1\times \V \omega_2)
=-\int_{\mathcal S}\V x\times \T T(\V u,p)\cdot \V n\; \d \sigma \qquad &&\text{ in }(0,\infty),
\\
&\V u=\V \omega_1\times \V x\qquad &&\text{ on }\mathcal C,
\\
&\V u=\V \omega_2\times \V x\qquad &&\text{ on }\mathcal S.
\end{aligned}
\end{equation}
Here, $\V u,p, \mu$ and $\rho$ denote the Eulerian absolute velocity and pressure of the liquid, its shear viscosity and (constant) density, respectively. In addition,  $\V v$ indicates the Eulerian velocity of the liquid relative to $\mathcal B_1$
\begin{equation}\label{eq:relative_velocity}
\V v:=\V u-\V \omega_1\times \V x. 
\end{equation}
We notice that $\diver \V v=0$, and it enjoys the following boundary conditions 
\begin{equation}\label{eq:bc_v}
\V v=\V 0\qquad \text{on }\mathcal C,\quad \text{and }\quad \V v\cdot \V n=0\qquad \text{on }\mathcal S. 
\end{equation}
Moreover, $\T T(\V u,p)$ denotes the Cauchy stress 
tensor for a viscous incompressible fluid
\begin{equation}\label{eq:Cauchy_stress}
\T T(\V u,p):=-p\T 1+2\mu \T D(\V u), \qquad
\text{where } \; \T D(\V u):=\frac 12 (\nabla \V u+(\nabla \V u)^T).  
\end{equation}
Finally, $\V \omega_1$  and $\V \omega_2$ are the angular velocities of $\mathcal B_1$ and $\mathcal B_2$, respectively. Equations \eqref{eq:motion}$_{1,2}$ with \eqref{eq:relative_velocity} and \eqref{eq:Cauchy_stress} are the {\em Navier-Stokes equations} in the non-inertial reference frame $\mathcal F$. These equations describe the dynamics of the liquid. Equations \eqref{eq:motion}$_{3,4}$ are the {\em balances of  angular momentum} (with respect to $G$) of $\mathcal B_1$ and $\mathcal B_2$, respectively. In particular, the surface integrals in \eqref{eq:motion}$_{3,4}$ represent the total torque exerted by the liquid on the cavity surface $\mathcal C$ and on the sphere $\mathcal S$, respectively. The equations of motion are augmented with the {\em no-slip} boundary conditions \eqref{eq:motion}$_{5,6}$ at $\mathcal C$ and $\mathcal S$, respectively.   

Equations \eqref{eq:motion} feature a combination of {\em dissipative} and {\em conservative} components. The {\em dissipative} role is played by the liquid variable through equations \eqref{eq:motion}$_{1,2,5,6}$.  Whereas, the {\em conservative} feature comes from the coupling with the equations \eqref{eq:motion}$_{3,4}$ describing the dynamics of the solids. As a matter of fact, the energy dissipates only in the liquid variable (see equation \eqref{eq:energy} below), and the total angular momentum (with respect to $G$) of the whole system is conserved at all times (see equation \eqref{eq:conservation0} below). These properties are satisfied for ``sufficiently regular'' solutions. 

\begin{lemma}[Energy Balance] \label{lem:energy}
Consider $t_0\ge 0$, and assume that the quadruple $(\V u, p, \V \omega_1,\V \omega_2)$  
satisfies the following regularity properties for all $T>0$:
\begin{equation}\label{eq:regularity}\begin{split}
&\V u\in C^0([t_0, t_0+T];W^{1,2}(\Ell)\cap H(\Ell))
\cap L^2(t_0,t_0+T;W^{2,2}(\Ell)),
\\
&\quad\frac{\partial \V u}{\partial t}\in L^2(t_0,t_0+T;L^2(\Ell)),\;
\quad p\in L^2(t_0,t_0+T;W^{1,2}(\Ell)),\; 
\\
&\qquad \qquad\qquad\qquad\V \omega_1,\V \omega_2 \in  W^{1,\infty}(t_0,t_0+T). 
\end{split}\end{equation}  
If $(\V u,p,\V \omega_1,\V \omega_2)$ satisfies \eqref{eq:motion} a.e. in $(t_0,\infty)$, then 
the following {\em energy balance} holds. 
\begin{equation}\label{eq:energy}
\frac 12 \frac{\d }{\d t}\left[\rho \norm{\V u}_{L^2(\Ell)}^2+\V \omega_1\cdot \T I_{\mathcal B}\cdot \V \omega_1
+\lambda |\V \omega_2|^2\right]+2\mu\norm{\T D(\V u)}^2_{L^2(\Ell)}=0. 
\end{equation}
\end{lemma}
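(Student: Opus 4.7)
The plan is the standard energy identity for Navier--Stokes coupled with rigid body equations: I multiply the momentum equation \eqref{eq:motion}$_1$ by $\V u$, integrate over $\Ell$, and use the rigid body equations \eqref{eq:motion}$_{3,4}$ to absorb the boundary terms arising from integration by parts. The regularity \eqref{eq:regularity} is exactly what is needed to justify every manipulation (in particular, $\V u\in L^2(W^{2,2})$ and $\partial_t\V u\in L^2(L^2)$ legitimize the integration by parts in space and the identification $\int_\Ell \partial_t\V u\cdot\V u = \tfrac12\tfrac{d}{dt}\norm{\V u}_{L^2}^2$).

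The four terms on the left-hand side of \eqref{eq:motion}$_1$ are handled as follows. The time derivative yields $\tfrac{\rho}{2}\tfrac{d}{dt}\norm{\V u}_{L^2(\Ell)}^2$. The Coriolis term vanishes pointwise since $(\V\omega_1\times\V u)\cdot\V u=0$. For the convective term, I write $(\V v\cdot\nabla\V u)\cdot\V u=\V v\cdot\nabla(\tfrac12|\V u|^2)$ and integrate by parts; using $\diver\V v=0$ in $\Ell$ together with the boundary conditions \eqref{eq:bc_v} ($\V v=\V 0$ on $\mathcal C$ and $\V v\cdot\V n=0$ on $\mathcal S$) this term is zero.

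For the stress term, integration by parts gives
\begin{equation*}
\int_\Ell \diver\T T(\V u,p)\cdot\V u\,\d x
= -\int_\Ell \T T(\V u,p):\nabla\V u\,\d x + \int_{\mathcal C\cup\mathcal S}\V u\cdot\T T(\V u,p)\cdot\V n\,\d\sigma,
\end{equation*}
with $\V n$ the outer unit normal to $\Ell$. Since $\diver\V u=0$, the pressure contribution in the bulk vanishes and the symmetric part gives $-2\mu\norm{\T D(\V u)}^2_{L^2(\Ell)}$. On the boundary, I insert $\V u=\V\omega_i\times\V x$ from \eqref{eq:motion}$_{5,6}$ and use the scalar triple product identity $(\V\omega\times\V x)\cdot\T T\cdot\V n = \V\omega\cdot(\V x\times\T T\cdot\V n)$ to obtain
\begin{equation*}
\V\omega_1\cdot\!\int_{\mathcal C}\V x\times\T T(\V u,p)\cdot\V n\,\d\sigma
+\V\omega_2\cdot\!\int_{\mathcal S}\V x\times\T T(\V u,p)\cdot\V n\,\d\sigma.
\end{equation*}

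At this point I substitute \eqref{eq:motion}$_{3}$ and \eqref{eq:motion}$_{4}$ for the two torque integrals. Taking the scalar product of \eqref{eq:motion}$_3$ with $\V\omega_1$ annihilates the gyroscopic term $\V\omega_1\cdot(\V\omega_1\times\T I_{\mathcal B}\cdot\V\omega_1)=0$ and leaves $\V\omega_1\cdot\T I_{\mathcal B}\cdot\dot{\V\omega}_1=\tfrac12\tfrac{d}{dt}(\V\omega_1\cdot\T I_{\mathcal B}\cdot\V\omega_1)$ (since $\T I_{\mathcal B}$ is symmetric and, being evaluated in the body-fixed frame $\mathcal F$, time-independent). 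Similarly, \eqref{eq:motion}$_4$ dotted with $\V\omega_2$ yields $\lambda\V\omega_2\cdot\dot{\V\omega}_2=\tfrac{\lambda}{2}\tfrac{d}{dt}|\V\omega_2|^2$. Combining everything reproduces \eqref{eq:energy}. There is no real obstacle here; the only points requiring care are the signs of the boundary contributions (so that the two torque integrals appear with precisely the signs found in \eqref{eq:motion}$_{3,4}$) and checking that the regularity \eqref{eq:regularity} suffices, which it does since all quantities in sight lie in $L^2$ on $\Ell\times(t_0,t_0+T)$ with the needed traces on $\mathcal C\cup\mathcal S$.
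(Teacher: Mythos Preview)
Your proof is correct and follows essentially the same route as the paper's own proof: multiply \eqref{eq:motion}$_1$ by $\V u$, kill the convective term via $\diver\V v=0$ and \eqref{eq:bc_v}, integrate the stress term by parts to produce the dissipation $2\mu\norm{\T D(\V u)}^2_{L^2(\Ell)}$ plus the boundary torque integrals, and then close with \eqref{eq:motion}$_{3,4}$ dotted with $\V\omega_1$ and $\V\omega_2$. Your write-up is in fact slightly more explicit than the paper's (you spell out the pointwise vanishing of the Coriolis term and the triple-product identity on the boundary), but the argument is the same.
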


\begin{proof}
Let  us take the $L^2$-inner product of \eqref{eq:motion}$_2$ with $\V u$, we find that 
\[
\frac \rho2 \frac{\d }{\d t}\norm{\V u}_{L^2(\Ell)}^2+\int_{\Ell}(\V v\cdot \nabla \V u)\cdot \V u\; \d V
-\int_{\Ell}\V u\cdot \diver\T T\; \d V=0.
\]
Since $\diver \V v=\diver \V u=0$ by \eqref{eq:motion}$_2$, 
using \eqref{eq:bc_v} and Gauss' Theorem, we can infer the following 
\[
\int_{\Ell}(\V v\cdot \nabla \V u)\cdot \V u\; \d V=0.
\]
By \eqref{eq:motion}$_{5,6}$ and \eqref{eq:Cauchy_stress}, and again by Gauss' Theorem, we get 
\[
\frac \rho2 \frac{\d }{\d t}\norm{\V u}_{L^2(\Ell)}^2
-\V \omega_1\cdot\int_{\mathcal C}\V x\times \T T\cdot \V n\; \d \sigma 
-\V \omega_2\cdot\int_{\mathcal S}\V x\times \T T\cdot \V n\; \d \sigma
+2\mu\norm{\T D(\V u)}^2_{L^2(\Ell)}=0.
\]   
From the latter displayed equation, \eqref{eq:energy} immediately follows by using 
\eqref{eq:motion}$_{3,4}$ dot-multiplied by $\V \omega_1$ and $\V \omega_2$, respectively.  
\end{proof}

With the same hypotheses of the previous lemma, we can show the following. 
\begin{lemma}[Conservation of total angular momentum]\label{lem:conservation}
If the quadruple $(\V u,p,\V \omega_1,\V \omega_2)$ satisfies \eqref{eq:regularity} for some $t_0\ge 0$, 
and \eqref{eq:motion} a.e. in $(t_0,\infty)$, then 
\begin{equation}\label{eq:balance_angular_momentum}
\V{\dot A}+\V \omega_1\times \V A=\V 0,
\end{equation}
where 
\begin{equation}\label{eq:anugular_momentum}
\V A:=\rho \int_{\Ell} \V x\times \V u\; \d V+\T I_{\mathcal B}\cdot \V \omega_1+\lambda \V \omega_2
\end{equation}
is the {\em total angular momentum} of the whole system with respect to $G$. In particular, equation \eqref{eq:balance_angular_momentum} implies that 
\begin{equation}\label{eq:conservation0}
|\V A(t)|=|\V A(t_0)|,\qquad \text{all }t\ge t_0.
\end{equation}
\end{lemma}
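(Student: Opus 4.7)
The plan is to differentiate $\V A$ term by term and substitute from the equations of motion, showing that all surface and volume contributions reorganize into $-\V\omega_1\times\V A$. Since $\Ell$ is fixed in the body frame $\mathcal F$, we have
\[
\dot{\V A}=\rho\int_{\Ell}\V x\times\frac{\partial\V u}{\partial t}\,\d V+\T I_{\mathcal B}\cdot\dot{\V\omega}_1+\lambda\dot{\V\omega}_2,
\]
and the regularity \eqref{eq:regularity} justifies this differentiation under the integral sign. I would then use \eqref{eq:motion}$_1$ to replace $\rho\,\partial_t\V u$ by $\diver\T T-\rho\,\V v\cdot\nabla\V u-\rho\,\V\omega_1\times\V u$ and treat the three resulting volume integrals separately.

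First, for the stress term, integration by parts together with the symmetry of the Cauchy stress $\T T$ makes the volume contribution vanish (the Levi-Civita symbol contracted with $T_{jk}$ is zero), leaving only the boundary pieces
\[
\int_{\Ell}\V x\times\diver\T T\,\d V
=\int_{\mathcal C}\V x\times\T T\cdot\V n\,\d\sigma
+\int_{\mathcal S}\V x\times\T T\cdot\V n\,\d\sigma,
\]
which, by \eqref{eq:motion}$_{3,4}$, equals $-\T I_{\mathcal B}\cdot\dot{\V\omega}_1-\V\omega_1\times\T I_{\mathcal B}\cdot\V\omega_1-\lambda\dot{\V\omega}_2-\lambda\,\V\omega_1\times\V\omega_2$. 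Adding $\T I_{\mathcal B}\cdot\dot{\V\omega}_1+\lambda\dot{\V\omega}_2$ from $\dot{\V A}$ already cancels the time-derivative terms and produces exactly the Euler-type pieces $-\V\omega_1\times(\T I_{\mathcal B}\cdot\V\omega_1+\lambda\V\omega_2)$ of $-\V\omega_1\times\V A$.

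The core of the argument is then to show that the two remaining volume terms combine into the missing piece $-\V\omega_1\times\rho\int_{\Ell}\V x\times\V u\,\d V$. For the convective term, I would integrate by parts using $\diver\V v=0$ and the boundary conditions \eqref{eq:bc_v} (in particular $\V v\cdot\V n=0$ on $\partial\Ell$) to obtain
\[
\rho\int_{\Ell}\V x\times(\V v\cdot\nabla\V u)\,\d V=\rho\int_{\Ell}\V u\times\V v\,\d V;
\]
substituting $\V v=\V u-\V\omega_1\times\V x$ and expanding $\V u\times(\V\omega_1\times\V x)$ by the BAC--CAB identity gives two terms of the form $(\V\omega_1\cdot\V u)\V x$ and $\V\omega_1\,(\V u\cdot\V x)$. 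For the Coriolis-like term $\V x\times(\V\omega_1\times\V u)$, a direct application of BAC--CAB yields $\V\omega_1(\V x\cdot\V u)-\V u(\V x\cdot\V\omega_1)$. Adding the two contributions, the scalar factors $\V\omega_1\int_{\Ell}\V u\cdot\V x$ cancel, and what remains is precisely $-\rho\int_{\Ell}\V\omega_1\times(\V x\times\V u)\,\d V$, i.e.\ $-\V\omega_1\times\rho\int_{\Ell}\V x\times\V u\,\d V$.

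Collecting all the pieces gives $\dot{\V A}=-\V\omega_1\times\V A$, which is \eqref{eq:balance_angular_momentum}. Then \eqref{eq:conservation0} follows by dotting \eqref{eq:balance_angular_momentum} with $\V A$: since $\V A\cdot(\V\omega_1\times\V A)=\V\omega_1\cdot(\V A\times\V A)=0$, the map $t\mapsto|\V A(t)|^2$ is constant. The main obstacle is just the bookkeeping in the second and third volume terms: one must verify that the BAC--CAB expansions indeed produce exactly the combination $\V\omega_1\times(\V x\times\V u)$ after accounting for the sign from the integration by parts in the convective term, which relies crucially on both $\diver\V v=0$ in $\Ell$ and $\V v\cdot\V n=0$ on $\partial\Ell$.
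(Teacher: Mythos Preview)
Your proof is correct and follows essentially the same route as the paper: differentiate $\V A$, substitute from \eqref{eq:motion}$_1$, dispatch the stress term via the symmetry of $\T T$ and Gauss' theorem so that the boundary integrals are absorbed by \eqref{eq:motion}$_{3,4}$, and then reduce the convective plus Coriolis contributions to $-\V\omega_1\times\rho\int_{\Ell}\V x\times\V u\,\d V$. The only cosmetic difference is that where you expand both pieces by BAC--CAB and watch the $\V\omega_1(\V u\cdot\V x)$ terms cancel, the paper packages the same computation into a single application of the Jacobi identity $\V a\times(\V b\times\V c)+\V b\times(\V c\times\V a)=-\V c\times(\V a\times\V b)$ with $(\V a,\V b,\V c)=(\V u,\V x,\V\omega_1)$.
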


\begin{proof}
From \eqref{eq:motion}$_{1,3,4}$, we find that 
\begin{equation}\label{eq:dotA}
\begin{split}
\V{\dot A}&=\rho \int_{\Ell} \V x\times \frac{\partial \V u}{\partial t}\; \d V
+\T I_{\mathcal B}\cdot \dot{\V \omega}_1+\lambda \dot{\V \omega}_2
\\
&=\int_{\Ell} \V x\times\left(\diver \T T(\V u,p)-\rho\V v\cdot \nabla \V u
-\rho\V \omega_1\times \V u\right)\; \d V -\V \omega_1\times \T I_{\mathcal B}\cdot \V \omega_1
\\
&\qquad -\int_{ \mathcal C}\V x\times \T T(\V u,p)\cdot \V n\; \d \sigma
-\lambda \V \omega_1\times \V \omega_2
-\int_{\mathcal S}\V x\times \T T(\V u,p)\cdot \V n\; \d \sigma. 
\end{split}
\end{equation}
Since the Cauchy stress tensor is symmetric, by Gauss' Theorem we get that 
\[
\int_{\Ell} \V x\times\diver \T T(\V u,p)\; \d V
-\int_{ \mathcal C}\V x\times \T T(\V u,p)\cdot \V n\; \d \sigma
-\int_{\mathcal S}\V x\times \T T(\V u,p)\cdot \V n\; \d \sigma=\V 0.
\]
Using again Gauss' Theorem together with \eqref{eq:bc_v}, we also find that 
\[\begin{split}
-\rho\int_{\Ell} \V x\times\left(\V v\cdot \nabla \V u+\V \omega_1\times \V u\right)\d V
&=\rho\int_{\Ell}\left[\V u\times (\V \omega_1\times \V x)
+ \V x\times\left(\V u\times \V \omega_1\right)\right]\d V
\\
&=-\rho\int_{\Ell}\V \omega_1\times (\V x\times \V u)\d V.
\end{split}\]
In the last equality, we have used the following property of the cross product in $\R^3$: 
\[
\V a\times (\V b\times \V c)+\V b\times (\V c\times \V a)=-\V c\times (\V a\times \V b),\quad 
\text{all } \V a,\V b,\V c\in \R^3.
\]
Therefore, \eqref{eq:dotA} becomes 
\[
\V{\dot A}=-\V \omega_1\times \left(\rho\int_{\Ell}\V x\times \V u\;\d V
+\T I_{\mathcal B}\cdot \V \omega_1+\lambda \V \omega_2\right)=-\V \omega_1\times \V A. 
\]
This shows \eqref{eq:balance_angular_momentum}, from which \eqref{eq:conservation0} 
immediately follows by taking the dot-product of \eqref{eq:balance_angular_momentum} 
by $\V A$. 
\end{proof}

In the next section, we will provide the functional setting in which we will study the existence of solutions to the equations of motion. 

\section{Functional spaces}\label{sec:functional}

Consider the spaces 
\[\begin{aligned}
\mathcal R(\mathcal V)&:=\{\V u\in C^\infty(\mathcal V):\; \V u=\V \omega_u \times \V x\text{ on }\mathcal V,
\text{ for some }\V \omega_u \in \R^3\},
\\
C^\infty_R(\mathcal V)&:=\left\{\V u\in C^\infty(\mathcal V):\; \V u=\V \omega_u \times \V x\text{ in a neighborhood of }\mathcal B_2,
\text{ for some }\V \omega_u \in \R^3\right\}. 
\end{aligned}\]
For every $1\le q<\infty$, let us consider the norm 
\begin{equation}\label{eq:norm_w}
\norm{\V u}_q:=\left(\int_{\mathcal V}\tilde \rho\V u^q\right)^{1/q}
=\left(\rho\norm{\V u}^q_{L^q(\Ell)}+\lambda|\V \omega_u|^q\right)^{1/q},\qquad\text{for all }\V u\in C^\infty_R(\mathcal V). 
\end{equation}
In the above equation,  
\begin{equation}\label{eq:density}
\tilde \rho:=\left\{\begin{split}
\rho\qquad\quad &\text{on }\Ell
\\
\frac{15\lambda}{8\pi R^5}\qquad &\text{on }\mathcal B_2.
\end{split}\right.
\end{equation}
$L^q_R(\mathcal V)$ indicates the completion of $C^\infty_R(\mathcal V)$ in the norm $\norm{\cdot}_q$. In the particular case of $q=2$, $L^2_R(\mathcal V)$ is a Hilbert space endowed with the inner product 
\begin{equation}\label{eq:inner_product_w}
(\V u,\V v):=\int_{\mathcal V}\tilde \rho\V u\cdot \V v=\int_\Ell \rho\; \V u\cdot \V v+\lambda \V \omega_u\cdot \V \omega_v.
\end{equation}
One can show that the following characterization holds for every $1\le q<\infty$ (see e.g. \cite[Chapter 1, Section 1]{temam1985})
\[
L^q_R(\mathcal V)=\{\V u\in L^q(\mathcal V):\; \V u=\V \omega_u\times \V x\;\text{ on }\mathcal B_2\;\text{ for some }\V \omega_u\in \R^3\}.
\]

Consider the spaces 
\[
\mathcal D_R(\mathcal V):= \{\V u\in C^\infty_R(\mathcal V)\cap C^\infty_0(\mathcal V):\; \diver \V u=0\; \text{ on }\mathcal V\}, 
\] 
and for $T>0$
\begin{multline*}
\mathcal D_R(\mathcal V_T):= \{C^\infty_0(\mathcal V\times[0,T)):\; \diver \V u=0\; \text{ on }\mathcal V\times[0,T),
\\
\qquad\qquad\quad\V u=\V \omega_u \times \V x\text{ in a neighborhood of }\mathcal B_2, \text{ for some }\V \omega_u \in C^\infty_0([0,T))\}.
\end{multline*}

In addition, $\mathcal H_q(\mathcal V)$ denotes the completion of $\mathcal D_R(\mathcal V)$ with respect to the norm $\norm{\cdot }_q$. In a similar fashion to the classical space of the hydrodynamics (see e.g. \cite[Section III.2]{Ga}), one can show that, the space $\mathcal H_q(\mathcal V)$ has the following representation 
\[\begin{split}
\mathcal H_q(\mathcal V)=\{\V u\in L^q_R(\mathcal V):\;& \diver \V u=0\;\text{ on }\mathcal V,\; \V u\cdot \V n=0\;\text{ on }\mathcal C\}.
\end{split}\]
Moreover, we can consider the projection operator $\mathcal P_q$ of $L^q_R(\mathcal V)$ onto $\mathcal H^q(\mathcal V)$ (c.f. \cite[Remark III.1.1 \& Theorem III.1.2]{Ga}). Let $1<q<\infty$. The space $\mathcal H^1_q(\mathcal V)$ denotes the completion of $\mathcal D_R(\mathcal V)$ with respect to the norm 
\begin{equation}\label{eq:norm_1q}
\norm{\cdot}_{1,q}:=\left(\norm{\cdot}_q^q+2\mu\norm{\T D(\cdot)}^q_{L^q(\mathcal V)}\right)^{1/q}.
\end{equation}
The right-hand side of latter displayed equation defines indeed a norm due to the following {\em Korn inequality} (\cite[Theorem 1]{Geymonat}). 
\begin{lemma}\label{lem:korn_q}
For $1 <q<\infty$ the space $U_q:=\{\V u\in L^q(\mathcal V):\; \T D(\V u)\in L^q(\mathcal V)\}$ is equal to $W^{1,q}(\mathcal V)$. Moreover, there exist two constants $0<c_1<c_2$ such that
\[
c_1\norm{\V u}_{W^{1,q}(\mathcal V)}\le\left(\norm{\V u}_{L^q(\mathcal V)}^q+\norm{\T D(\V u)}^q_{L^q(\mathcal V)}+\norm{\diver(\V u)}^q_{L^q(\mathcal V)}\right)^{1/q} \le c_2\norm{\V u}_{W^{1,q}(\mathcal V)},
\]
for all $\V u\in U_q$. 
\end{lemma}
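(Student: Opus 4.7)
The plan is to split the double inequality into its two halves and, along the way, identify $U_q$ with $W^{1,q}(\mathcal V)$. The right-hand inequality is trivial: the pointwise bounds $|\T D(\V u)|\le |\nabla \V u|$ and $|\diver \V u|\le \sqrt 3\,|\nabla \V u|$ give $W^{1,q}(\mathcal V)\subseteq U_q$ together with a constant $c_2$. For the lower inequality I would invoke the classical Ne\v{c}as negative-norm lemma: on a bounded Lipschitz domain $\Omega$ and for $1<q<\infty$, a distribution $f\in \mathcal D'(\Omega)$ belongs to $L^q(\Omega)$ as soon as $f\in W^{-1,q}(\Omega)$ and $\nabla f\in W^{-1,q}(\Omega;\R^3)$, with the quantitative control
\[
\norm{f}_{L^q(\Omega)}\le C\bigl(\norm{f}_{W^{-1,q}(\Omega)}+\norm{\nabla f}_{W^{-1,q}(\Omega)}\bigr).
\]
This applies to $\Omega=\mathcal V$, whose boundary $\mathcal C$ is $C^2$ by the standing assumption on $\Ell$.

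I would then apply Ne\v{c}as' lemma componentwise to $f=\partial_j u_k$ for fixed $j,k\in\{1,2,3\}$. Since $\V u\in L^q(\mathcal V)$ one immediately has $f\in W^{-1,q}(\mathcal V)$ with $\norm{f}_{W^{-1,q}(\mathcal V)}\le \norm{\V u}_{L^q(\mathcal V)}$. For the gradient of $f$ the key input is the elementary identity
\[
\partial_i\partial_j u_k=\partial_i D_{jk}(\V u)+\partial_j D_{ik}(\V u)-\partial_k D_{ij}(\V u),
\]
which follows directly from $D_{ij}(\V u)=\tfrac12(\partial_i u_j+\partial_j u_i)$. Because $\T D(\V u)\in L^q(\mathcal V)$, each term on the right-hand side lies in $W^{-1,q}(\mathcal V)$ with norm bounded by $\norm{\T D(\V u)}_{L^q(\mathcal V)}$; plugging into Ne\v{c}as' estimate yields $\partial_j u_k\in L^q(\mathcal V)$ together with
\[
\norm{\partial_j u_k}_{L^q(\mathcal V)}\le C\bigl(\norm{\V u}_{L^q(\mathcal V)}+\norm{\T D(\V u)}_{L^q(\mathcal V)}\bigr).
\]
Summing over $j,k$ simultaneously proves $U_q=W^{1,q}(\mathcal V)$ and the left-hand inequality, with the $\norm{\diver \V u}_{L^q(\mathcal V)}^q$ term on the right-hand side automatically dominated.

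The main technical obstacle is the Ne\v{c}as lemma itself: its Lipschitz-domain version is equivalent, by duality, to the solvability of $\diver \V w=f$ in $W^{1,q}_0(\mathcal V;\R^3)$ with $\norm{\V w}_{W^{1,q}_0(\mathcal V)}\le C\norm{f}_{L^q(\mathcal V)}$ on mean-zero $f$. This right inverse of the divergence is furnished by the Bogovski\u\i{} integral operator combined with a partition of unity that reduces the construction to star-shaped subdomains. Since $\mathcal C$ is $C^2$, these hypotheses are comfortably met and one may simply appeal to the statement in \cite{Geymonat}.
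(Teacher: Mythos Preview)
Your argument is correct and is essentially the classical route to Korn's inequality in $L^q$: the trivial upper bound, followed by the identity $\partial_i\partial_j u_k=\partial_i D_{jk}(\V u)+\partial_j D_{ik}(\V u)-\partial_k D_{ij}(\V u)$ and Ne\v{c}as' lemma to recover the full gradient. The paper, however, does not supply any proof of this lemma at all; it simply quotes the result from \cite[Theorem 1]{Geymonat}. So rather than diverging from the paper's approach, you have written out in detail the argument that the paper outsources to that reference, and in fact the proof in \cite{Geymonat} proceeds along exactly the lines you describe.

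One small remark: the $\norm{\diver \V u}_{L^q}^q$ term in the middle expression plays no role in your lower bound (you control $\norm{\nabla \V u}_{L^q}$ already from $\norm{\V u}_{L^q}$ and $\norm{\T D(\V u)}_{L^q}$ alone), which is consistent with the fact that the lemma is applied in the paper only to divergence-free fields. Its presence in the statement is harmless and merely reflects the formulation in \cite{Geymonat}.
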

The following characterization holds 
\begin{equation*}
\mathcal H^1_q(\mathcal V)=\{\V u\in W^{1,q}_0(\mathcal V):\; \diver \V u=0\;\text{ on }\mathcal V,\; 
\V u=\V \omega_u\times \V x\;\text{ on }\mathcal B_2\;\text{ for some }\V \omega_u\in \R^3\}.
\end{equation*}
We notice that $\mathcal D_R(\mathcal V)\subset \mathcal H^1_q(\mathcal V)$, so $\mathcal H^1_q(\mathcal V)$ is dense in $\mathcal H_q(\mathcal V)$. Moreover, since $W^{1,q}(\mathcal V)$ is compactly embedded in $L^q(\mathcal V)$ for all $\displaystyle 1\le q<\infty$ (\cite[Theorem 6.3]{Adams}), we have the following lemma. 
\begin{lemma}\label{lem:embedding1}
If $\displaystyle 1\le q<\infty$, then the embedding of $\mathcal H^1_q(\mathcal V)$ in $\mathcal H_q(\mathcal V)$ is compact. 
\end{lemma}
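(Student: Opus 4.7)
The strategy is the standard Rellich--Kondrachov argument, adapted to account for the rigid constraint $\V u=\V\omega_u\times\V x$ on $\mathcal B_2$. Let $\{\V u_n\}\subset \mathcal H^1_q(\mathcal V)$ be bounded in the norm \eqref{eq:norm_1q}. Since $\tilde\rho$ defined by \eqref{eq:density} is bounded above and below by positive constants, $\norm{\cdot}_q$ is equivalent to $\norm{\cdot}_{L^q(\mathcal V)}$, so the quantity $\norm{\V u_n}_{L^q(\mathcal V)}^q+\norm{\T D(\V u_n)}^q_{L^q(\mathcal V)}$ is uniformly bounded. Using $\diver\V u_n=0$ together with Lemma \ref{lem:korn_q} then yields a uniform bound for $\{\V u_n\}$ in $W^{1,q}_0(\mathcal V)$.

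Next, I invoke the classical Rellich--Kondrachov compact embedding $W^{1,q}_0(\mathcal V)\hookrightarrow\hookrightarrow L^q(\mathcal V)$ (\cite[Theorem 6.3]{Adams}; $\mathcal V$ is bounded and of class $C^2$), extracting a subsequence, not relabeled, with $\V u_n\to \V u$ strongly in $L^q(\mathcal V)$. In particular, $\V u_n\to \V u$ in $L^q(\Ell)$ and in $L^q(\mathcal B_2)$.

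The key observation is that strong $L^q$--convergence on $\mathcal B_2$ automatically upgrades to convergence of the associated angular velocities in $\R^3$. Indeed, writing $\V u_n|_{\mathcal B_2}=\V\omega_n\times\V x$ with $\V\omega_n\in\R^3$, a direct computation in spherical coordinates centred at $G$ gives
\[
\norm{\V\omega\times\V x}_{L^q(\mathcal B_2)}^q=|\V\omega|^q\int_0^R r^{q+2}\,dr\int_{\mathbb S^2}(\sin\theta)^q\,d\sigma=C_{q,R}\,|\V\omega|^q,
\]
with $C_{q,R}>0$. Hence $\{\V\omega_n\}$ is Cauchy in $\R^3$ and converges to some $\V\omega\in\R^3$, and the almost-everywhere limit $\V u$ satisfies $\V u=\V\omega\times\V x$ on $\mathcal B_2$. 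Combining the two pieces,
\[
\norm{\V u_n-\V u}_q^q=\rho\norm{\V u_n-\V u}^q_{L^q(\Ell)}+\lambda|\V\omega_n-\V\omega|^q\longrightarrow 0,
\]
which is precisely convergence in $\mathcal H_q(\mathcal V)$.

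I do not expect any serious obstacle: the membership of the limit $\V u$ in $\mathcal H_q(\mathcal V)$ is automatic from its explicit characterization, since $\diver \V u=0$ and $\V u\cdot \V n=0$ on $\mathcal C$ are preserved under strong $L^q$--convergence of divergence-free fields with zero normal trace. The only subtlety worth flagging is the step connecting $L^q$--convergence on $\mathcal B_2$ to convergence of the finite-dimensional vectors $\V\omega_n$, which is resolved by the elementary identity above (exploiting that $\mathcal B_2$ has positive radius, so the map $\V\omega\mapsto\V\omega\times\V x$ is a linear injection from $\R^3$ into $L^q(\mathcal B_2)$ with closed image).
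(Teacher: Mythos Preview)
Your proof is correct and follows essentially the same approach as the paper, which simply invokes the Rellich--Kondrachov compact embedding $W^{1,q}(\mathcal V)\hookrightarrow L^q(\mathcal V)$ (\cite[Theorem 6.3]{Adams}) without further detail. Your version spells out the passage from boundedness in $\norm{\cdot}_{1,q}$ to boundedness in $W^{1,q}_0(\mathcal V)$ via Lemma~\ref{lem:korn_q}, and makes explicit why the limit inherits the rigid structure on $\mathcal B_2$ and hence lies in $\mathcal H_q(\mathcal V)$.
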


We are now in position to state some inequalities that will be used in the next sections. The proof are standards and will be omitted. We start with the following {\em Korn-type equalities}. 

\begin{lemma}[Korn's equality in $\mathcal H^1_2$]
For all $\V v, \V w\in \mathcal H^1_2(\mathcal V)$ the following equality holds 
\[
2\int_{\Ell}\T D(\V v):\T D(\V w)\; \d V
=\int_{\mathcal V}\nabla \V v:\nabla \V w\; \d V. 
\]
In particular, \begin{equation}\label{eq:korn_2}
\norm{\nabla \V v}_{L^2(\mathcal V)}=\sqrt 2 \norm{\T D( \V v)}_{L^2(\Ell)}. 
\end{equation}
\end{lemma}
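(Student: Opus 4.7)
The plan is to reduce the claim to the classical Korn identity on the full domain $\mathcal V$ by exploiting the rigid motion structure on $\mathcal B_2$. For any $\V v\in \mathcal H^1_2(\mathcal V)$ one has $\V v=\V \omega_v\times \V x$ on $\mathcal B_2$, so $\nabla \V v$ is a constant skew-symmetric tensor and therefore $\T D(\V v)=\V 0$ a.e.\ on $\mathcal B_2$. Consequently
\[
\int_{\Ell}\T D(\V v):\T D(\V w)\,\d V=\int_{\mathcal V}\T D(\V v):\T D(\V w)\,\d V,
\]
and the claim reduces to proving the polarized Korn identity
\[
2\int_{\mathcal V}\T D(\V v):\T D(\V w)\,\d V=\int_{\mathcal V}\nabla \V v:\nabla \V w\,\d V,\qquad \V v,\V w\in \mathcal H^1_2(\mathcal V).
\]

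To establish this I would use the pointwise algebraic identity
\[
2\,\T D(\V v):\T D(\V w)=\nabla \V v:\nabla \V w+(\partial_i v_j)(\partial_j w_i),
\]
which reduces the task to showing that the cross term $I(\V v,\V w):=\int_{\mathcal V}(\partial_i v_j)(\partial_j w_i)\,\d V$ vanishes. The bilinear form $I$ is continuous on $W^{1,2}(\mathcal V)\times W^{1,2}(\mathcal V)$, hence, by the very definition of $\mathcal H^1_2(\mathcal V)$ as the completion of $\mathcal D_R(\mathcal V)$ in the norm $\norm{\cdot}_{1,2}$ (together with Lemma \ref{lem:korn_q}, which makes this norm equivalent to the ambient $W^{1,2}$-norm on $\mathcal D_R(\mathcal V)$), it suffices to prove $I(\V v,\V w)=0$ for $\V v,\V w\in \mathcal D_R(\mathcal V)$. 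For such test fields, $\V v$ is smooth and compactly supported in $\mathcal V$, so it vanishes on $\mathcal C=\partial \mathcal V$, and $\diver \V w=0$ throughout $\mathcal V$ (including on $\mathcal B_2$, since $\V \omega_w\times \V x$ is divergence free). A single integration by parts, together with commuting the partial derivatives on $\V w$, gives
\[
I(\V v,\V w)=-\int_{\mathcal V} v_j\,\partial_j(\diver \V w)\,\d V+\int_{\partial \mathcal V}n_i\, v_j\,\partial_j w_i\,\d\sigma=0,
\]
the volume term being zero because $\diver \V w=0$ and the boundary term because $\V v|_{\mathcal C}=\V 0$.

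The identity \eqref{eq:korn_2} is then the particular case $\V v=\V w$, using once more that $\T D(\V v)=\V 0$ on $\mathcal B_2$. I do not anticipate a serious obstacle: the only delicate point is that arbitrary elements of $\mathcal H^1_2(\mathcal V)$ are not a priori $W^{2,2}$, so the second derivative that appears in the integration by parts has no classical meaning for them; this is however handled painlessly by the density of $\mathcal D_R(\mathcal V)$ combined with $W^{1,2}$-continuity of the bilinear form $I$, which ensures that the identity, once established on smooth rigid-in-$\mathcal B_2$ fields, persists in the limit.
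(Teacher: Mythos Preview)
Your argument is correct and is precisely the standard proof of this Korn-type identity: reduce to $\mathcal V$ using $\T D(\V v)=\V 0$ on $\mathcal B_2$, expand $2\,\T D(\V v):\T D(\V w)$ pointwise, and kill the cross term $\int_{\mathcal V}(\partial_i v_j)(\partial_j w_i)\,\d V$ by one integration by parts on smooth compactly supported divergence-free fields, then pass to the limit by density. The paper itself omits the proof, explicitly calling it standard, so there is nothing to compare against; your write-up is exactly what one would fill in.
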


In a similar fashion as in \cite[Proposition 3.]{Geymonat}, and applying Lemma \ref{lem:korn_q} to \eqref{eq:norm_1q} one can easily show the following Poincar\'e-Korn inequality. 
\begin{lemma}[Poincar\'e-Korn inequality in $\mathcal H^1_q$]
Let $1<q<\infty$. There exist two positive constants $k_1<k_2$ such that 
\begin{equation}\label{eq:korn_q}
k_1\norm{\V v}_{W^{1,q}(\mathcal V)}\le \norm{\V v}_{1,q}\le k_2\norm{\T D(\V v)}_{L^q(\Ell)},\qquad \text{for all }\;\V v\in \mathcal H^1_q(\mathcal V). 
\end{equation}
\end{lemma}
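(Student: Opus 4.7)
The plan is to deduce both bounds in \eqref{eq:korn_q} from Lemma \ref{lem:korn_q} by exploiting the structural rigidity imposed on elements of $\mathcal H^1_q(\mathcal V)$ by the condition $\V v|_{\mathcal B_2} = \V \omega_v \times \V x$ together with the homogeneous Dirichlet condition on $\partial \mathcal V$. Two preliminary observations will be used throughout: (i) since $\V v|_{\mathcal B_2}$ is a rigid motion, $\T D(\V v)$ vanishes on $\mathcal B_2$ and therefore $\norm{\T D(\V v)}_{L^q(\mathcal V)} = \norm{\T D(\V v)}_{L^q(\Ell)}$; (ii) by spherical symmetry of $\mathcal B_2$, a direct computation yields $\norm{\V v}_{L^q(\mathcal B_2)}^q = c_{q,R}\,|\V \omega_v|^q$ for an explicit positive constant $c_{q,R}$, showing that $\norm{\cdot}_q$ is equivalent on $\mathcal H^1_q(\mathcal V)$ to $\norm{\cdot}_{L^q(\mathcal V)}$, with constants depending only on $\rho$, $\lambda$, $R$, and $q$.

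The left inequality is then almost immediate: since $\V v \in W^{1,q}_0(\mathcal V)$ with $\diver \V v = 0$, Lemma \ref{lem:korn_q} combined with (i)--(ii) gives
$$c_1 \norm{\V v}_{W^{1,q}(\mathcal V)} \le \bigl(\norm{\V v}_{L^q(\mathcal V)}^q + \norm{\T D(\V v)}_{L^q(\Ell)}^q\bigr)^{1/q} \le C \bigl(\norm{\V v}_q^q + \norm{\T D(\V v)}_{L^q(\Ell)}^q\bigr)^{1/q},$$
and the right-hand side is controlled by a multiple of $\norm{\V v}_{1,q}$ in view of \eqref{eq:norm_1q}.

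For the right inequality it suffices, by (i), to prove the Poincar\'e-type bound $\norm{\V v}_q \le K \norm{\T D(\V v)}_{L^q(\Ell)}$, which I would handle by a compactness/contradiction argument along the lines of \cite[Proposition 3]{Geymonat}. Assuming the bound fails, one extracts a sequence $\{\V v_n\} \subset \mathcal H^1_q(\mathcal V)$ with $\norm{\V v_n}_q = 1$ and $\norm{\T D(\V v_n)}_{L^q(\Ell)} \to 0$. By the left inequality already established, $\{\V v_n\}$ is bounded in $W^{1,q}(\mathcal V)$, so by Lemma \ref{lem:embedding1} a subsequence converges strongly in $\mathcal H_q(\mathcal V)$ and in $L^q(\mathcal V)$ to some $\V v_\star$ with $\norm{\V v_\star}_q = 1$, $\diver \V v_\star = 0$, $\V v_\star = \V 0$ on $\partial \mathcal V$, $\V v_\star = \V \omega_\star \times \V x$ on $\mathcal B_2$, and $\T D(\V v_\star) = \V 0$ on $\Ell$ by weak lower semicontinuity. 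Since $\Ell$ is connected (as $\overline{B_R(G)} \subset \mathcal V$), the classical fact that a $W^{1,q}$ field with vanishing symmetric gradient on a connected domain is affine with antisymmetric linear part forces $\V v_\star = \V a_\star + \V b_\star \times \V x$ on $\Ell$; the homogeneous trace on $\mathcal C$ then kills both $\V a_\star$ and $\V b_\star$, and trace continuity across $\mathcal S$ subsequently forces $\V \omega_\star = \V 0$, so $\V v_\star \equiv \V 0$, contradicting $\norm{\V v_\star}_q = 1$. The delicate step is the passage to the rigid-motion structure of the limit; once that identification is made, the elimination of $\V a_\star$, $\V b_\star$, and $\V \omega_\star$ via the boundary traces on $\mathcal C$ and $\mathcal S$ is routine.
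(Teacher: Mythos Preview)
Your proposal is correct and follows precisely the route the paper indicates: the left inequality comes directly from Lemma~\ref{lem:korn_q} applied to \eqref{eq:norm_1q} (using that $\diver \V v=0$ and $\T D(\V v)=\V 0$ on $\mathcal B_2$), and the right inequality is obtained via a compactness/contradiction argument in the style of \cite[Proposition~3]{Geymonat}, exactly as the paper suggests. The identification of the limit as a rigid motion on $\Ell$ and its elimination through the traces on $\mathcal C$ and $\mathcal S$ is handled correctly.
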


Recall that $\mathcal V=\Ell\cup \overline{\mathcal B_2}$. Next lemma follows directly from \eqref{eq:korn_2} and Poincar\'e inequality. 
\begin{lemma}\label{le:estimates_2}
The following estimates hold for all $\V v\in \mathcal H^1_2(\mathcal V)$. \begin{enumerate}
\item Let $\V \omega_v \in \R^3$ be such that $\V v=\V \omega_v \times \V x$ on $\mathcal B_2$, then 
\begin{equation}\label{eq:korn1}
\norm{\nabla \V{ v}}_{L^2(\Ell)}^2+\frac 83 \pi R^3 |\V \omega_v|^2=2\norm{\T D (\V{v})}_{L^2(\Ell)}^2
\end{equation}
\item There exists a positive constants $C_1$ depending only on $\Ell$ (and independent of $\V v$) such that 
\begin{equation}\label{eq:poincare_korn}
\norm{\V v}_{L^2(\Ell)}\le C_1\norm{\T D(\V v)}_{L^2(\Ell)}.
\end{equation}
\end{enumerate}
\end{lemma}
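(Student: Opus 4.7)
The plan is to derive both estimates from the Korn equality \eqref{eq:korn_2} in $\mathcal H^1_2(\mathcal V)$, which is already available from the previous lemma, combined with a one-line pointwise computation on the rigid ball and, for (2), the classical Poincar\'e inequality.

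For part (1), I would start by splitting the domain of integration $\mathcal V = \Ell \cup \overline{\mathcal B_2}$ in the squared Korn equality to write $\norm{\nabla \V v}_{L^2(\mathcal V)}^2 = \norm{\nabla \V v}_{L^2(\Ell)}^2 + \norm{\nabla \V v}_{L^2(\mathcal B_2)}^2 = 2\norm{\T D(\V v)}_{L^2(\Ell)}^2$. On $\mathcal B_2$, the identity $\V v = \V \omega_v\times \V x$ reads in components $v_i = \epsilon_{ijk}(\V\omega_v)_j x_k$, so $\partial_\ell v_i = \epsilon_{ij\ell}(\V\omega_v)_j$. Using the contraction $\epsilon_{ij\ell}\epsilon_{ik\ell} = 2\delta_{jk}$, I obtain $|\nabla \V v|^2 = 2|\V\omega_v|^2$ pointwise on $\mathcal B_2$. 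Integrating over $\mathcal B_2 = B_R(G)$ (whose volume equals $\tfrac{4}{3}\pi R^3$) yields $\norm{\nabla \V v}_{L^2(\mathcal B_2)}^2 = \tfrac{8}{3}\pi R^3 |\V\omega_v|^2$, and rearranging gives \eqref{eq:korn1}.

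For part (2), the plan is to exploit the characterization of $\mathcal H^1_2(\mathcal V)$ stated just before the lemma: any $\V v \in \mathcal H^1_2(\mathcal V)$ belongs to $W^{1,2}_0(\mathcal V)$ and therefore has zero trace on $\partial \mathcal V = \mathcal C$. The classical Poincar\'e inequality on the bounded Lipschitz domain $\mathcal V$ then provides a constant $C_P$, depending only on the geometry of $\mathcal V$ (equivalently, only on $\Ell$ and $R$), such that $\norm{\V v}_{L^2(\mathcal V)} \le C_P \norm{\nabla \V v}_{L^2(\mathcal V)}$. Combining the trivial bound $\norm{\V v}_{L^2(\Ell)} \le \norm{\V v}_{L^2(\mathcal V)}$ with \eqref{eq:korn_2} yields \eqref{eq:poincare_korn} with $C_1 = \sqrt{2}\,C_P$.

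No genuine obstacle is anticipated: both statements follow directly from the Korn equality proved in the previous lemma together with an elementary pointwise computation of $|\nabla(\V\omega_v\times \V x)|^2$ and the standard Poincar\'e inequality on $W^{1,2}_0(\mathcal V)$. This matches the authors' comment that the proofs are standard and omitted.
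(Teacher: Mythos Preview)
Your proposal is correct and follows exactly the route the paper indicates: the authors state that the lemma ``follows directly from \eqref{eq:korn_2} and Poincar\'e inequality,'' and you carry this out by splitting $\mathcal V=\Ell\cup\mathcal B_2$ in \eqref{eq:korn_2}, computing $|\nabla(\V\omega_v\times\V x)|^2=2|\V\omega_v|^2$ pointwise on the ball, and then invoking Poincar\'e on $W^{1,2}_0(\mathcal V)$ for part~(2). There is nothing to add.
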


Since $\mathcal D_R(\mathcal V)$ is dense in $\mathcal H^1_R(\mathcal V)$, by  Sobolev inequality together with \eqref{eq:korn_q}, we can prove the following lemma.  

\begin{lemma}\label{le:sobolev_korn}
For all $s<3$, there exists a positive constant $k$ depending only on $\Ell$ (and independent of $\V v$) such that 
\begin{equation}\label{eq:sobolev_korn}
\norm{\V v}_{q}\le k\norm{\T D(\V v)}_{L^s(\Ell)}, \qquad \text{for all }\; \V v\in \mathcal H^1_q(\mathcal V)
\end{equation}
if and only if $q=6/(3-s)$. 
\end{lemma}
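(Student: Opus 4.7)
The plan is to combine the Poincar\'e--Korn inequality \eqref{eq:korn_q} with the standard Sobolev embedding on the bounded domain $\mathcal V$, leveraging the fact that $\V v$ restricts to a rigid rotation on $\mathcal B_2$. This rotation annihilates $\T D(\V v)$ on $\mathcal B_2$, so that $\norm{\T D(\V v)}_{L^s(\Ell)}=\norm{\T D(\V v)}_{L^s(\mathcal V)}$ and the right-hand side of \eqref{eq:sobolev_korn} can be treated as a norm on all of $\mathcal V$.

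For the sufficiency direction I would proceed in three steps. First, the exponent relation forces $q\ge s$, so on the bounded domain $\mathcal V$ H\"older's inequality gives $\mathcal H^1_q(\mathcal V)\hookrightarrow \mathcal H^1_s(\mathcal V)$, and applying \eqref{eq:korn_q} at the exponent $s$ yields
\[
\norm{\V v}_{W^{1,s}(\mathcal V)}\le \frac{k_2}{k_1}\,\norm{\T D(\V v)}_{L^s(\Ell)}.
\]
Second, the classical Sobolev embedding $W^{1,s}(\mathcal V)\hookrightarrow L^q(\mathcal V)$ at the scaling-critical exponent determined by $s<3$ in dimension three produces $\norm{\V v}_{L^q(\mathcal V)}\le C\,\norm{\T D(\V v)}_{L^s(\Ell)}$. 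Third, I split $\norm{\V v}_{L^q(\mathcal V)}^q = \norm{\V v}_{L^q(\Ell)}^q+\norm{\V v}_{L^q(\mathcal B_2)}^q$; on $\mathcal B_2$, writing $\V v=\V \omega_v\times \V x$ and integrating in spherical coordinates with polar axis aligned to $\V \omega_v$ yields an identity $\norm{\V v}_{L^q(\mathcal B_2)}^q=c(q,R)\,|\V \omega_v|^q$ with $c(q,R)>0$, so $|\V \omega_v|\le C\,\norm{\V v}_{L^q(\mathcal B_2)}$. Feeding these bounds back into the definition \eqref{eq:norm_w} of $\norm{\V v}_q$ produces \eqref{eq:sobolev_korn}.

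For the necessity direction I would run a scaling test. Fix $\V x_0\in \Ell$ at positive distance from $\mathcal C\cup \mathcal S$, a nontrivial $\V \Psi\in C^\infty_0(B_1)$, and for small $\epsilon>0$ set $\V v_\epsilon(\V x):=(\curl \V \Psi)\bigl((\V x-\V x_0)/\epsilon\bigr)$. Each $\V v_\epsilon$ is divergence-free and compactly supported in a small ball around $\V x_0$ inside $\Ell$, so it belongs to $\mathcal D_R(\mathcal V)\subset \mathcal H^1_q(\mathcal V)$ with $\V \omega_{v_\epsilon}=\V 0$. A change of variables then gives $\norm{\V v_\epsilon}_{L^q(\Ell)}=\epsilon^{3/q}\norm{\curl \V \Psi}_{L^q(\R^3)}$ and $\norm{\T D(\V v_\epsilon)}_{L^s(\Ell)}=\epsilon^{3/s-1}\norm{\T D(\curl \V \Psi)}_{L^s(\R^3)}$. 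Substituting into \eqref{eq:sobolev_korn} and letting $\epsilon\to 0^+$, the two $\epsilon$-powers must match for a uniform constant $k$ to exist, pinning down the claimed relation between $q$ and $s$.

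The main obstacle will be the book-keeping of the $|\V \omega_v|$-contribution to $\norm{\V v}_q$: one must verify that this term is genuinely controlled rather than simply absent under the embedding. This is handled by the explicit spherical-coordinate identity on $\mathcal B_2$ that makes $|\V \omega_v|$ comparable to $\norm{\V v}_{L^q(\mathcal B_2)}$; after that, the argument reduces to a routine composition of \eqref{eq:korn_q} and Sobolev embedding, with the scaling test giving sharpness.
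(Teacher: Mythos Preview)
Your sufficiency argument is correct and is exactly the route the paper indicates in the sentence preceding the lemma (density of $\mathcal D_R(\mathcal V)$, Poincar\'e--Korn \eqref{eq:korn_q} at exponent $s$, then the Sobolev embedding $W^{1,s}_0(\mathcal V)\hookrightarrow L^q(\mathcal V)$). The extra care you take with the $|\V\omega_v|$ contribution is sound but not strictly needed, since $\norm{\V v}_{L^q(\mathcal V)}$ already dominates a fixed positive multiple of $|\V\omega_v|$ via the ball integral you compute.

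There is, however, a genuine gap in your necessity argument. From your scaling family you get $\norm{\V v_\epsilon}_q = A\,\epsilon^{3/q}$ and $\norm{\T D(\V v_\epsilon)}_{L^s(\Ell)} = B\,\epsilon^{3/s-1}$, and for \eqref{eq:sobolev_korn} to hold with a uniform $k$ you need $A\,\epsilon^{3/q}\le kB\,\epsilon^{3/s-1}$ for all small $\epsilon>0$. This forces only $3/q\ge 3/s-1$, i.e.\ $q\le 3s/(3-s)$; the two powers do \emph{not} have to match. You cannot send $\epsilon\to\infty$ to get the reverse inequality because the support of $\V v_\epsilon$ must stay inside the bounded set $\Ell$. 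In fact, on the bounded domain $\mathcal V$ the inequality \eqref{eq:sobolev_korn} \emph{does} hold for every $q\le 3s/(3-s)$: simply combine your sufficiency proof with the embedding $L^{3s/(3-s)}(\mathcal V)\hookrightarrow L^q(\mathcal V)$. So the ``only if'' cannot be true as literally written. The paper offers no argument for the converse either; the intended content is presumably that $3s/(3-s)$ is the \emph{largest} admissible $q$, and that is precisely what your scaling test establishes. (Incidentally, your scaling identifies the critical exponent as $3s/(3-s)$, which coincides with the stated $6/(3-s)$ only at $s=2$; the latter looks like a typo in the paper.)
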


We conclude this section by introducing the space $\mathcal H^k_q(\mathcal V)$ as the completion of $\mathcal D_R(\mathcal V)$ with respect to the norm $\norm{\cdot }_{W^{k,q}(\mathcal V)}$ for all $1\le q<\infty$ and $k\in \N$, $k\ge 2$. In particular, $\mathcal H^2_q(\mathcal V)$ is a Banach space endowed with the norm 
\begin{equation}\label{eq:norm_2q}
\norm{\cdot}_{2,q}:=\left(\norm{\cdot}^q_{L^q(\mathcal V)}+\norm{\T D(\cdot)}^q_{L^q(\mathcal V)}+\norm{\T H(\cdot)}^q_{L^q(\mathcal V)}\right)^{1/q},
\end{equation}
where $\T H$ denote the third order tensor of second order derivatives. Similarly to Lemma \ref{lem:embedding1}, the following embedding also holds. 
\begin{lemma}\label{lem:embeddingk}
If $\displaystyle 1\le q<\infty$ and $k\ge 1$, then the embedding of $\mathcal H^k_q(\mathcal V)$ in $\mathcal H_q(\mathcal V)$ is compact. 
\end{lemma}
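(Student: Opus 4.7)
My plan is to deduce Lemma~\ref{lem:embeddingk} directly from the $k=1$ case already given as Lemma~\ref{lem:embedding1}, by factoring the embedding through $\mathcal H^1_q(\mathcal V)$. Since the composition of a continuous embedding with a compact one is compact, it suffices to produce a continuous inclusion $\mathcal H^k_q(\mathcal V)\hookrightarrow \mathcal H^1_q(\mathcal V)$ for every $k\ge 1$; composing it with Lemma~\ref{lem:embedding1} then yields the claim.

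To establish that continuous inclusion I would compare the defining norms on the common dense subspace $\mathcal D_R(\mathcal V)$. The weight $\tilde\rho$ introduced in \eqref{eq:density} is bounded above and bounded away from zero on $\mathcal V$, so $\|\cdot\|_q$ is equivalent to the unweighted $L^q(\mathcal V)$-norm on $L^q_R(\mathcal V)$. Combined with the elementary bound $\|\T D(\V u)\|_{L^q(\mathcal V)}\le \|\nabla \V u\|_{L^q(\mathcal V)}$, this gives
\[
\norm{\V u}_{1,q}\le C\,\norm{\V u}_{W^{1,q}(\mathcal V)}\le C\,\norm{\V u}_{W^{k,q}(\mathcal V)}\qquad\text{for all }\V u\in \mathcal D_R(\mathcal V).
\]
Extending by density furnishes the required continuous embedding, and composition with Lemma~\ref{lem:embedding1} completes the reduction.

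An equivalent direct route, which I would keep in mind as a sanity check, is the following. A bounded sequence $\{\V u_n\}\subset \mathcal H^k_q(\mathcal V)$ is bounded in $W^{k,q}(\mathcal V)$, so by Rellich--Kondrachov a subsequence (not relabelled) converges in $L^q(\mathcal V)$ to some $\V u$. On $\mathcal B_2$ each $\V u_n$ has the rigid form $\V\omega_n\times \V x$, and the linear map $\V\omega\mapsto \V\omega\times \V x$ from $\R^3$ into $L^q(\mathcal B_2)$ is injective; since $\R^3$ is finite-dimensional this map is a linear homeomorphism onto its image, so the sequence $\{\V\omega_n\}$ is Cauchy in $\R^3$ and converges to some $\V\omega$ with $\V u|_{\mathcal B_2}=\V\omega\times \V x$. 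The norm equivalence noted above then yields $\norm{\V u_n-\V u}_q\to 0$.

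The main (and rather mild) obstacle will be confirming that the limit $\V u$ actually lies in $\mathcal H_q(\mathcal V)$, i.e., that $\diver \V u=0$ in $\mathcal V$ and $\V u\cdot \V n=0$ on $\mathcal C$ survive the $L^q(\mathcal V)$-limit. Each $\V u_n$ is itself the $W^{k,q}(\mathcal V)$-limit of elements of $\mathcal D_R(\mathcal V)$, hence is divergence-free on $\mathcal V$ and vanishes on $\partial \mathcal V=\mathcal C$; testing $\diver \V u_n$ and the normal trace $\V u_n\cdot \V n$ against smooth compactly supported functions (the latter via the duality between $L^q$-vector fields with divergence in $L^q$ and $W^{1-1/q,q'}(\partial\mathcal V)$), both constraints pass to the $L^q$-limit, placing $\V u$ in $\mathcal H_q(\mathcal V)$ and finishing the proof.
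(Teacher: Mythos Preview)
Your proposal is correct and aligns with the paper's treatment: the paper does not give a detailed proof but simply states that the result holds ``similarly to Lemma~\ref{lem:embedding1}'', i.e., by the Rellich--Kondrachov compactness $W^{k,q}(\mathcal V)\hookrightarrow L^q(\mathcal V)$, which is exactly your second route. Your first route, factoring through $\mathcal H^1_q(\mathcal V)$ and invoking Lemma~\ref{lem:embedding1}, is an equally valid minor variant; note also that the verification that the limit lies in $\mathcal H_q(\mathcal V)$ can be shortened by observing that $\mathcal H_q(\mathcal V)$, being a completion, is closed in $(L^q_R(\mathcal V),\norm{\cdot}_q)$.
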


The previous results together with Lemma \ref{lem:energy} and Lemma \ref{lem:conservation} 
allow us to present a new mathematical formulation of the problem. This new formulation is equivalent to \eqref{eq:motion}, and will reveal more features of the dynamics of our physical system. 
 
\section{An equivalent formulation}\label{sec:equivalent_formulation}

Let us introduce the new variable
\begin{equation}\label{eq:new_variables}
\V \omega:=\V \omega_2-\V \omega_1.  
\end{equation}
The definition of the variable $\V \omega$ comes from the following heuristic reasoning. Due to the liquid viscosity (since also $\T D(\V u)=\T D(\V v)$), we expect the velocity of the liquid relative to $\mathcal B_1$ (and also the one relative to $\mathcal B_2$) to decay to zero as time approaches to infinity. If this happens, from the boundary conditions \eqref{eq:bc_v}, also $\V \omega$ is expected to decay, and  the system would then move as a whole rigid body.  

Let $\T I:=\T I_{C}+\lambda \T 1=(\lambda_1+\lambda)\V e_1\otimes\V e_1
+(\lambda_2+\lambda)\V e_2\otimes\V e_2+(\lambda_3+\lambda)\V e_3\otimes\V e_3$ be the inertia tensor of the whole system with respect to $G$. Here, $\T 1$ denotes the identity tensor in $\R^3\times \R^3$. We note that  $\T I_{C}=\T I_{\Ell}+\T I_{\mathcal B}$, where
\[
\V b\cdot \T I_{\Ell}\cdot \V c=\rho\int_{\Ell}(\V x\times \V b)\cdot (\V x\times \V c)\; \d V,
\qquad \V b, \V c\in \R^3.
\]
The tensor $\T I$ is a symmetric and positive definite (thus, invertible). To simplify the notation, let us introduce the vector field  
\begin{equation}\label{eq:omega_R}
\V \omega_R:=-\T I^{-1}\cdot\left[\rho\int_{\Ell}\V x\times \V v\; \d V+\lambda \V \omega\right].
\end{equation}

In terms of the variables $(\V v,p,\V \omega_1,\V \omega)$, and taking into account \eqref{eq:Cauchy_stress} together with Lemma \ref{lem:conservation}, the equations of motion \eqref{eq:motion} can be equivalently reformulated as follows: 
\begin{equation}\label{eq:Motion}
\begin{aligned}
&\left.\begin{split}
&\rho\left(\frac{\partial \V v}{\partial t}+\V{\dot \omega}_1\times \V x+
\V v\cdot \nabla \V v+2\V \omega_1\times \V v\right)
\\
&\qquad\qquad\qquad\qquad\quad\qquad
=\frac \rho2 \nabla |\V \omega_1\times \V x|^2+\diver \T T(\V v,p)
\\
&\diver \V v=0
\end{split}\right\}&&\text{on }\Ell\times (0,\infty),
\\
&\T I\cdot(\V{\dot \omega}_1-\V{\dot \omega}_R) +\V \omega_1\times \T I\cdot (\V \omega_1-\V \omega_R)=\V 0 
\qquad&&\text{in }(0,\infty),
\\
&\lambda\left(\V{\dot \omega}+\V{\dot \omega}_1
+\V \omega_1\times \V \omega\right)
=-\int_{\mathcal S}\V x\times \T T(\V v,p)\cdot \V n\; \d \sigma \qquad &&\text{in }(0,\infty),
\\
&\V v=\V 0\qquad &&\text{on }\mathcal C,
\\
&\V v=\V \omega\times \V x\qquad &&\text{on }\mathcal S.
\end{aligned}
\end{equation}
The proof of the equivalence between the formulations \eqref{eq:motion} and \eqref{eq:Motion} goes along the one provided in the case when no rigid body is within the cavity of $\mathcal B_1$ (namely, if $R\equiv 0$). We refer the interested reader to \cite[Appendix]{DiGaMaZu} and \cite[Sections 2.1 and 2.2]{Ma2}. The energy balance \eqref{eq:energy} can be rewritten as follows 
\begin{equation}\label{eq:Energy0}
\begin{split}
\frac 12 \frac{\d }{\d t}\left[\rho \norm{\V v}^2_{L^2(\Ell)}+\lambda |\V \omega|^2-\V \omega_R\cdot \T I \cdot \V \omega_R+(\V \omega_1-\V \omega_R)\cdot \T I\cdot 
(\V \omega_1-\V \omega_R)\right]
+2\mu\norm{\T D( \V v)}^2_{L^2(\Ell)}=0.  
\end{split}\end{equation}

Consider the functionals  
\begin{equation}\label{eq:a_varphi}
\begin{split}
b:\;\V w\in\mathcal H_q(\mathcal V)\mapsto 
b(\V w)&:=-\T I^{-1}\cdot \int_{\mathcal V}\tilde \rho\V x\times \V w
\\
&=-\T I^{-1}\cdot \left(\rho\int_{\Ell}\V x\times \V w+\lambda \V \omega_w\right)\in \R^3,
\end{split}\end{equation}
and taking $q=2$ in the previous definition, we define   
\begin{equation}\label{eq:lyapunov}
\mathcal E:\; \V w\in \mathcal H_2(\mathcal V)\mapsto \mathcal E(\V w):=\norm{\V w}^2_2-b(\V w)\cdot\T I\cdot b(\V w)\in \R.  
\end{equation}
In particular, if we consider the field 
\begin{equation}\label{eq:extension}
\V{\tilde v}:=\left\{\begin{split}
\V v\quad\quad \text{in }&\Ell,
\\
\V \omega \times \V x\quad \text{in }&\mathcal B_2,
\end{split}\right.\end{equation}
and use \eqref{eq:norm_w} and \eqref{eq:omega_R}, we find that  $b(\V{\tilde v})=\V \omega_R$ and 
\begin{equation}\label{eq:energy_d}
\mathcal E(\V{\tilde v})=\rho \norm{\V v}^2_{L^2(\Ell)}+\lambda |\V \omega|^2-\V \omega_R\cdot \T I \cdot \V \omega_R. 
\end{equation}
The following lemma ensures that $\mathcal E$ is a positive definite functional. Actually, it says a little more. 
\begin{lemma}\label{le:kokr}
There exists a constant $c\in (0,1)$ such that  
\begin{equation}\label{eq:kokr}
c\norm{\V w}^2_2\le \mathcal E(\V w)
\le\norm{\V w}^2_2,
\end{equation}
for all $\V w\in \mathcal H_2(\mathcal V)$. Moreover, for every $\V w\in \mathcal H^1_2(\mathcal V)$, there exists a positive constant $C$ such that 
\begin{equation}\label{eq:dissipation}
\mathcal E(\V w)\le C\norm{\T D( \V w)}^2_{L^2(\Ell)}.
\end{equation}
\end{lemma}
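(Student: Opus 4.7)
The plan is to obtain both bounds in \eqref{eq:kokr} by first deriving a convenient algebraic identity for $\mathcal E$. The upper bound $\mathcal E(\V w)\le\norm{\V w}_2^2$ is immediate from the definition \eqref{eq:lyapunov} and positive-definiteness of $\T I$, so the heart of the matter is the lower bound. My first step would be to establish the identity
\begin{equation*}
\mathcal E(\V w) = \norm{\V w + b(\V w)\times \V x}_2^2 + b(\V w)\cdot \T I_{\mathcal B}\cdot b(\V w).
\end{equation*}
Setting $\V\zeta := b(\V w)$ and expanding $\norm{\V w + \V\zeta\times\V x}_2^2$ with respect to the inner product \eqref{eq:inner_product_w}, the cross-term equals $2\V\zeta\cdot\int_{\mathcal V}\tilde\rho(\V x\times\V w) = -2\V\zeta\cdot\T I\V\zeta$ by \eqref{eq:a_varphi}, while a direct computation (using that the density in \eqref{eq:density} is chosen precisely so that the inertia of $\mathcal B_2$ with respect to $G$ is $\lambda\T 1$) yields $\norm{\V\zeta\times\V x}_2^2 = \V\zeta\cdot(\T I_{\Ell}+\lambda\T 1)\V\zeta$. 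Rewriting with $\T I = \T I_{\Ell}+\T I_{\mathcal B}+\lambda\T 1$ then produces the identity and, in particular, $\mathcal E(\V w)\ge 0$, with equality only for $\V w = \V 0$.

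To upgrade $\mathcal E\ge 0$ to the coercive estimate $\mathcal E\ge c\norm{\cdot}_2^2$ with $c\in(0,1)$, I would exploit that $b:\mathcal H_2(\mathcal V)\to\R^3$ has finite-dimensional range, so $(\ker b)^\perp$ has dimension at most $3$. Writing the orthogonal decomposition $\V w = \V w_0 + \V w_1$ with $\V w_0\in(\ker b)^\perp$ and $\V w_1\in\ker b$, one has $\norm{\V w}_2^2 = \norm{\V w_0}_2^2 + \norm{\V w_1}_2^2$ and $b(\V w) = b(\V w_0)$, whence $\mathcal E(\V w) = \norm{\V w_1}_2^2 + \mathcal E(\V w_0)$. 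On the compact unit sphere of the finite-dimensional space $(\ker b)^\perp$, the continuous function $\V w_0\mapsto \mathcal E(\V w_0)$ is strictly positive by the identity just derived (since $b(\V w_0)\neq\V 0$ there), hence attains a positive minimum $c$; this $c$ is strictly less than $1$ because $b(\V w_0)\cdot\T I\cdot b(\V w_0)>0$ for every nonzero $\V w_0\in(\ker b)^\perp$. Combining yields $\mathcal E(\V w)\ge c\norm{\V w_0}_2^2 + \norm{\V w_1}_2^2\ge c\norm{\V w}_2^2$.

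Finally, \eqref{eq:dissipation} will follow at once from the upper bound just proved together with the Poincar\'e-Korn inequality \eqref{eq:korn_q} applied with $q=2$: $\norm{\V w}_2\le\norm{\V w}_{1,2}\le k_2\norm{\T D(\V w)}_{L^2(\Ell)}$, giving $\mathcal E(\V w)\le k_2^2\norm{\T D(\V w)}_{L^2(\Ell)}^2$. The main technical obstacle I anticipate is the careful bookkeeping in the identity, in particular verifying that the ad-hoc density in \eqref{eq:density} on $\mathcal B_2$ makes $\int_{\mathcal B_2}\tilde\rho(|\V x|^2\T 1-\V x\otimes\V x)$ equal to $\lambda\T 1$, which is what forces the clean cancellation via $\T I = \T I_{\Ell}+\T I_{\mathcal B}+\lambda\T 1$; everything else then reduces to finite-dimensional linear algebra and a standard compactness argument.
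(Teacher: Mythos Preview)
Your argument is correct. Both you and the paper establish the same key identity
\[
\mathcal E(\V w) = \norm{\V w + b(\V w)\times \V x}_2^2 + b(\V w)\cdot \T I_{\mathcal B}\cdot b(\V w),
\]
and both use it to conclude that $\mathcal E(\V w)=0$ forces $\V w=\V 0$. The difference lies in how coercivity is extracted from this. The paper introduces the finite-rank, nonnegative, self-adjoint operator $\mathbb B\V w := -b(\V w)\times \V x$, notes that $\mathcal E(\V w)=((\T 1-\mathbb B)\V w,\V w)$, and argues spectrally: since $\mathbb B$ is compact and $1$ is not an eigenvalue, $\T 1-\mathbb B$ has a bounded inverse, which yields the lower bound directly. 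Your route is more elementary: you split $\mathcal H_2(\mathcal V)=\ker b\oplus(\ker b)^\perp$, observe that $\mathcal E$ decouples as $\mathcal E(\V w)=\norm{\V w_1}_2^2+\mathcal E(\V w_0)$, and then minimize the continuous positive quadratic form $\mathcal E$ over the compact unit sphere of the at-most-three-dimensional space $(\ker b)^\perp$. Your approach avoids any appeal to operator or spectral theory and makes the role of finite-dimensionality completely explicit; the paper's approach is slightly more streamlined once the operator framework is set up, and it dovetails with later uses of $\mathbb B$ (e.g., the inner product $(\cdot,\cdot)_B$ in \eqref{eq:inner_product_B}). For \eqref{eq:dissipation}, both you and the paper simply combine the upper bound with a Poincar\'e--Korn/Sobolev--Korn inequality.
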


\begin{proof}
To prove \eqref{eq:kokr}, we will borrow some ideas from \cite[Section 7.2.3]{KoKr}. Consider the linear operator with finite dimensional range 
\begin{equation}\label{eq:compact}
\mathbb B:\; \V w\in \mathcal H_q(\mathcal V)\mapsto 
(\mathbb B\V w)(\V x):=-b(\V w)\times \V x\in \mathcal R(\mathcal V),
\end{equation}
where $b(\cdot)$ has been defined in \eqref{eq:a_varphi}. 

If $q=2$,  $\mathbb B$ is a nonnegative self-adjoint operator in $\mathcal H_2(\mathcal V)$ endowed with the inner product $(\cdot,\cdot)$ defined in \eqref{eq:inner_product_w}. In fact, since $\T I$ is symmetric, for all $\V w$ and $\V z\in \mathcal H_2(\mathcal V)$ we have 
\[\begin{split}
(\mathbb B\V w,\V z)&=-\rho\int_{\Ell}(b(\V w)\times \V x)\cdot\V z\; \d V-\lambda \V \omega_z\cdot b(\V w)
\\
&=-b(\V w)\cdot\left(\rho\int_{\Ell}\V x\times \V z\; \d V+\lambda \V \omega_z\right)
=b(\V w)\cdot \T I\cdot b(\V z)=(\V w,\mathbb B\V z).
\end{split}\]
In particular, since $\T I$ is positive definite,  
$
(\mathbb B\V w, \V w)=b(\V w)\cdot \T I\cdot b(\V w)\ge 0$. Moreover, 
\begin{equation}\label{eq:1-B}
((\T 1-\mathbb B)\V w,\V w)=\norm{\V w}^2_2-b(\V w)\cdot \T I\cdot b(\V w)
=\mathcal E(\V w).
\end{equation}
The inequality on the right-hand side of \eqref{eq:kokr} follows immediately from the latter displayed equations. Thus, to complete the proof of \eqref{eq:kokr}, it is enough to show that the operator $\T 1-\mathbb B$ admits a bounded inverse in $( \mathcal H_2(\mathcal V),\norm{\cdot}_2)$. First, we will show that $\T 1-\mathbb B$ is a nonnegative operator on $ \mathcal H_2(\mathcal V)$.

Using the above calculations, we have the following:  
\[\begin{split}
((\T 1-\mathbb B)\V w,\V w)&=\norm{\V w}^2_2-b(\V w)\cdot \T I\cdot b(\V w)
\\
&=\norm{\V w+b(\V w)\times \V x}^2_2-\rho\norm{b(\V w)\times \V x}^2_{L^2(\Ell)}
\\
&\quad-2b(\V w)\cdot \left(\rho\int_{\Ell}\V x\times \V w\; \d V+\lambda \V \omega_w\right) -\lambda |b(\V w)|^2
-b(\V w)\cdot \T I\cdot b(\V w)
\\
&=\norm{\V w+b(\V w)\times \V x}^2_2-b(\V w)\cdot(\T I_{\Ell}+\lambda\T 1)\cdot b(\V w)
+2b(\V w)\cdot \T I\cdot b(\V w)
\\
&\quad -b(\V w)\cdot \T I\cdot b(\V w)
\\
&=\norm{\V w+b(\V w)\times \V x}^2_2-b(\V w)\cdot(\T I_{\Ell}+\lambda\T 1)\cdot b(\V w)
+b(\V w)\cdot \T I\cdot b(\V w)
\\
&=\norm{\V w+b(\V w)\times \V x}^2_2
+b(\V w)\cdot \T I_{\mathcal B}\cdot b(\V w)\ge 0
\end{split}\]
since $\T I_{\mathcal B}=\T I-\T I_{\Ell}-\lambda \T 1$ is also a positive definite tensor.  In addition to this, one can also show that $((\T 1-\mathbb B)\V w,\V w)=0$ iff $\V w\equiv \V 0$ on $\mathcal V$. We need to show only that $((\T 1-\mathbb B)\V w,\V w)=0$ implies that  $\V w\equiv \V 0$ (the converse implication is obvious). If $((\T 1-\mathbb B)\V w,\V w)=0$, then $b(\V w)\cdot \T I_{\mathcal B}\cdot b(\V w)=0$. Since $\T I_{\mathcal B}$ is positive definite, then the previous statement implies that $b(\V w)\equiv \V 0$, and also $(\mathbb B\V w,\V w)=0$. Thus, 
\[
\norm{\V w}_2=(\mathbb B\V w,\V w)=0,
\] 
implying that $\V w\equiv \V 0$ in $\mathcal V$. Summarizing, we have shown that $\mathbb B$ is a linear, nonnegative and self-adjoint operator with finite dimensional range, and for which $\gamma=1$ is not an eigenvalue. Necessarily, $\gamma=1$ is in the resolvent of $\mathbb B$, implying that $\T 1-\mathbb B$ admits a bounded inverse in $\mathcal H_2(\mathcal V)$ endowed with the norm defined in \eqref{eq:norm_w}. This concludes the proof of \eqref{eq:kokr}. 

The estimate \eqref{eq:dissipation} is an immediate consequence of \eqref{eq:kokr}together with \eqref{eq:sobolev_korn}. 
\end{proof}

Using \eqref{eq:energy_d} and \eqref{eq:lyapunov} in \eqref{eq:Energy0}, the balance of energy then reads as follows 
\begin{equation}\label{eq:Energy}
\frac{\d }{\d t}\left[\mathcal E(\V{\tilde v})+(\V \omega_1-\V \omega_R)\cdot \T I\cdot 
(\V \omega_1-\V \omega_R)\right]+4\mu\norm{\T D( \V v)}^2_{L^2(\Ell)}=0,
\end{equation} 
where $\V{\tilde v}$ has been define in \eqref{eq:extension}. From the physical viewpoint, $\mathcal E(\V{\tilde v})+(\V \omega_1-\V \omega_R)\cdot \T I\cdot (\V \omega_1-\V \omega_R)$ represents the {\em total kinetic energy} of the whole system of rigid bodies with a liquid-filled gap. 

Thanks to Lemma \ref{le:kokr}, we can introduce the inner product 
\begin{equation}\label{eq:inner_product_B}
(\V v,\V w)_B:=((\T 1-\mathbb B)\V v,\V w),\qquad \text{for all }\V v,\V w\in \mathcal H_2(\mathcal V)
\end{equation}
with associated norm $\norm{\cdot}_B:=\sqrt{((\T 1-\mathbb B)\cdot,\cdot)}=\sqrt{\mathcal E(\cdot)}$. 

In addition to the energy balance, the conservation of the total angular momentum \eqref{eq:conservation0} for the whole system can be rewritten in terms of the new variables
\begin{equation}\label{eq:conservation}
|\T I\cdot (\V \omega_1(t)-\V \omega_R(t))|=|\T I\cdot (\V \omega_1(0)-\V \omega_R(0))|\qquad \text{ all }t\ge 0. 
\end{equation}
One can also obtain \eqref{eq:conservation} by taking the dot-product of \eqref{eq:Motion}$_3$ by $\T I\cdot (\V \omega_1-\V \omega_R)$. 
 
\section{Weak solutions and their properties}\label{sec:weak}

Our investigation on the inertial motion about a fixed point of the system of two rigid bodies with a liquid-filled gap is carried out in a considerably large class of solutions to \eqref{eq:Motion} having finite kinetic energy.  

A {\em weak formulation} for the problem \eqref{eq:Motion}, can be found by dot-multiplying both sides of \eqref{eq:Motion}$_1$ by $\V \varphi\in\mathcal H^1_2(\mathcal V)$, integrating (by parts) the resulting equation over $\Ell\times (0,t)$, and using \eqref{eq:Motion}$_{3,4}$ together with \eqref{eq:xtimesomegatimesx} and \eqref{eq:omegatimesxtimesomegatimesx}. This leads to the following of problem: {\em find a solution $(\V{\tilde v},\V \Omega)$ to the following system of equations}
\begin{equation}\label{eq:weak}
\begin{aligned}
(\V{\tilde v}(t), \V \varphi)_B 
&+2\mu\int^t_0\int_{\mathcal V}\T D(\V{\tilde v})\dotdot \T D(\V \varphi)\;\d V\d \tau
+ b(\V \varphi)\cdot\int^t_0 [\V \Omega+b(\V{\tilde v})]\times \T I\cdot \V \Omega\;\d \tau 
\\
&+ \int^t_0\int_{\mathcal V}\tilde \rho[\V{\tilde v}\cdot \nabla \V{\tilde v}
+2(\V \Omega+b(\V{\tilde v}))\times \V{\tilde v}]\cdot \V \varphi\;\d V\d \tau=(\V{\tilde v}(0), \V \varphi)_B,
\\
& \qquad\qquad\qquad\qquad\qquad \qquad\text{ for all $\V \varphi\in\mathcal H^1_2(\mathcal V)$, }\text{and all  $t\in[0,\infty)$. }
\\
\T I\cdot\V{\Omega}(t)&+\int^t_0[\V \Omega+b(\V{\tilde v})]\times \T I\cdot \V \Omega\;\d \tau=\T I\cdot\V{\Omega}(0), \qquad\qquad\text{for all }t\in[0,\infty).
\end{aligned}
\end{equation}

\begin{definition}\label{def:weak}
The triple $(\V v,\V \omega_1,\V \omega)$ is a {\em weak solution} to \eqref{eq:Motion} if the following requirements are met. 
\begin{enumerate}
\item Consider the field $\V{\tilde v}$ in \eqref{eq:extension}. Then, 
\[
\V{\tilde v}\in C_w ([0,\infty);\mathcal H_2(\mathcal V))\cap L^\infty(0,\infty;\mathcal H_2(\mathcal V))\cap L^2(0,\infty;\mathcal H^1_2(\mathcal V)).
\]
\item The vector field $\V \Omega=\V \omega_1-b(\V{\tilde v})\in C^0([0,\infty))\cap C^1((0,\infty))$.  
\item $(\V{\tilde v},\V \Omega)$ satisfies \eqref{eq:weak}.
\item The following {\em strong energy inequality} holds: 
\begin{multline}\label{eq:strong_energy}
\mathcal E(\V{\tilde v}(t))+\V \Omega(t)\cdot \T I\cdot \V \Omega(t)+4\mu\int^t_s\norm{\T D( \V{\tilde v}(\tau))}^2_{L^2(\Ell)}\;\d \tau
\le\mathcal E(\V{\tilde v}(s))+\V \Omega(s)\cdot \T I\cdot 
\V \Omega(s),
\end{multline}
for all $t\ge s$ and a.a. $s\ge 0$ including $s=0$. 
\end{enumerate}
\end{definition}

From the previous definition, it immediately follows that the physical velocity fields $(\V v,\V \omega_1,\V \omega)$ enjoy the following properties 
\begin{equation}\label{eq:regularity_weak}
\begin{split}
&\V v\in C_w ([0,\infty);H(\Ell))\cap L^\infty(0,\infty;H(\Ell))\cap L^2(0,\infty;H(\Ell)\cap W^{1,2}(\Ell))
\\
&\V \omega_1\in C([0,\infty))\cap L^\infty(0,\infty),
\\
&\V \omega\in C([0,\infty))\cap L^\infty(0,\infty)\cap L^2(0,\infty),
\\
&\V v=\V 0\quad \text{on }\; \mathcal C, \qquad\V v=\V \omega\times \V x\quad\text{on }\;\mathcal S\text{ (in the trace sense).}   
\end{split}
\end{equation}
In particular, if $(\V v,\V \omega_1,\V \omega)$  is a weak solution, by \eqref{eq:strong_energy} together with \eqref{eq:kokr} and \eqref{eq:inner_product_w}, it follows that there exists a constant $c_0=c_0(\V v(0),\V \Omega(0),\V\omega(0))$ such that 
\[
\rho\norm{\V v}_{L^2(\Ell)}^2+\lambda|\V \omega|^2\le c^2_0,\;\qquad \quad \text{for all }\; t\ge 0. 
\]
Furthermore, up to redefining the above constant $c_0$, we also have 
\[\begin{aligned}
|\V \omega_R(t)|\le \rho\int_{\Ell}|\V x\times \V v|\; \d V+\lambda|\V \omega&|\le c_0\;&&\text{ for all }\; t\ge 0, 
\\
\V \Omega(t)\cdot \T I\cdot \V \Omega(t)=\V \omega_1(t)\cdot \T I\cdot \V \omega_1(t)-2\V \omega_R(t)\cdot \T I \cdot \V \omega_1(t)&\le c^2_0\;&&\text{ for all }\; t\ge 0.
\end{aligned}\]
Thus, for every $\varepsilon>0$, 
\[\begin{split}
\lambda_{\min}|\V \omega_1(t)|^2\le c^2_0+2\V \omega_R(t)\cdot \T I \cdot \V \omega_1(t)
&\le c^2_0+2\lambda_{\max}|\V \omega_R(t)|\;|\V \omega_1(t)|
\\
&\le c^2_0+\frac{\lambda_{\max}}{\varepsilon}|\V \omega_R(t)|^2+\lambda_{\max}\varepsilon|\V \omega_1(t)|^2.
\end{split}\]
Here, $\lambda_{\min}$ and $\lambda_{\max}$ denote the minimum and maximum eigenvalue of $\T I$, respectively. Choosing $\varepsilon:=\lambda_{\min}/(2\lambda_{max})$, we can conclude that 
\[
\frac 12 \lambda_{\min}|\V \omega_1(t)|^2\le c^2_0\left(1+2\frac{\lambda^2_{\max}}{\lambda_{\min}}\right)\;\qquad \quad \text{for all }\; t\ge 0. 
\]

\begin{remark}
Equations \eqref{eq:weak} together with \eqref{eq:strong_energy} represent the ``classical'' weak formulation ({\em \`a la Leray-Hopf}) for the problem of a rigid body having a cavity $\mathcal V$ completely filled by a viscous liquid with the varying density $\tilde \rho$ defined in  \eqref{eq:density}. However, setting $\V \omega_1=\V \Omega+\V \omega_R$ and using \eqref{eq:xtimesomegatimesx} and \eqref{eq:omegatimesxtimesomegatimesx}, one can immediately observe that the system of equations in \eqref{eq:weak} is the appropriate weak formulation obtained by testing \eqref{eq:Motion} with functions $\V \psi\in C^\infty(\Ell)$, $\diver \V \psi=0$ on $\Ell$ and satisfying the boundary conditions $\V \psi=\V 0$ on $\mathcal C$ and $\V \psi=\V \omega_\psi\times \V x$ on $\mathcal S$. In fact, for such test functions\footnote{Due to its regularity, we can extend $\V \psi$ by its boundary value on $\mathcal B_2$ and use it as test function in \eqref{eq:weak}. } $\V \psi$ and all  $t\in(0,\infty)$,  
\begin{equation}\label{eq:weak_v}
\begin{split}
&\int_{\Ell} \rho[\V v(t)+\V \omega_1(t)\times \V x] \cdot \V \psi\; \d V 
+\lambda\left[\V \omega(t)+\V \omega_1(t)+\int^t_0\V \omega_1\times \V \omega\;\d \tau\right]\cdot \V \omega_\psi
\\
&\quad\qquad +2\mu\int^t_0\int_{\Ell}\T D(\V v)\dotdot \T D(\V \psi)\;\d V\d \tau+ \int^t_0\int_{\Ell}\rho[\V v\cdot \nabla \V v
+2\V \omega_1\times \V v]\cdot \V \psi\;\d V\d \tau
\\
&\quad=\int_{\Ell} \rho[\V v(0)+\V \omega_1(0)\times \V x] \cdot \V \psi\; \d V
+\lambda[\V \omega(0)+\V \omega_1(0)]\cdot\V \omega_\psi,
\\
&\T I\cdot(\V \omega_1(t)-\V \omega_R(t))+\int^t_0\V \omega_1\times \T I\cdot \V \Omega\;\d \tau=\T I\cdot(\V \omega_1(0)-\V \omega_R(0)). 
\end{split}
\end{equation}
\end{remark}

\begin{remark}\label{re:strong_2} 
Assume that $\V{\tilde v}$ possesses enough regularity to allow differentiation with respect to time and integration by parts in \eqref{eq:weak}$_1$. Then 
\[
\V \omega_1=\V \Omega+b(\V{\tilde v})=\V \Omega+\V \omega_R\;\in C^1(0,\infty),
\]
and \eqref{eq:Motion}$_3$ is satisfied for a.a. $t\in (0,\infty)$. Moreover, the fields $\V v$ and $\V \omega\times \V x$ in \eqref{eq:extension} maintain the same regularity of $\V{\tilde v}$ on $\Ell$ and $\mathcal B_2$, respectively. 

By \eqref{eq:weak}, we find that $\V{\tilde v}$ also satisfies  
\begin{equation}\label{eq:weak_s}
\begin{split}
&(\frac{\partial \V{\tilde v}}{\partial t}+\V{\dot \omega}_1\times \V x+\V{\tilde v}\cdot \nabla \V{\tilde v}
+2\V{\omega}_1\times \V{\tilde v}, \V \varphi) +2\mu\int_{\mathcal V}\T D(\V{\tilde v})\dotdot\T D(\V \varphi)
=0
\end{split}
\end{equation}
for all $\V \varphi\in\mathcal H^1_2(\mathcal V)$ and all  $t\in(0,\infty)$. In particular, 
\[\begin{split}
\int_{\Ell}\left[\rho\left(\frac{\partial \V v}{\partial t}+\V{\dot \omega}_1\times \V x+\V{v}\cdot \nabla \V{v}
+2\V{\omega}_1\times \V{v}\right)-\mu\Delta \V v\right]\cdot \V \varphi=0 
\end{split}\]
for every $\V \varphi\in H(\Ell)\cap W^{1,2}_0(\Ell)$. Thus, there exists $\tilde p\in L^2(0,\infty;W^{1,2}(\Ell))$ such that 
\[
\rho\left(\frac{\partial \V v}{\partial t}+\V{\dot \omega}_1\times \V x+\V{v}\cdot \nabla \V{v}
+2\V{\omega}_1\times \V{v}\right)-\mu\Delta \V v=\nabla \tilde p\qquad \text{a.e. in }\Ell\times (0,\infty). 
\]
Set 
\[
p:=\tilde p-\frac \rho2 |\V{\omega}_1\times \V x|^2\qquad \text{in }\Ell, 
\] 
then one immediately notices that equations \eqref{eq:Motion}$_{1,2,5,6}$ are satisfied almost everywhere in space-time. Dot-multiplying \eqref{eq:Motion}$_1$ by $\V\varphi \in \mathcal H^1_2(\mathcal V)$ such that $\V\omega_\varphi=\V e_i$, $i=1,\,2,\,3$, and integrating the resulting equation over $\Ell$ we find 
\begin{multline*}
\int_{\Ell}\rho\left[\frac{\partial \V v}{\partial t}+\V{\dot \omega}_1\times \V x+
\V v\cdot \nabla \V v+2\V \omega_1\times \V v\right]\cdot \V \varphi
=\int_{\mathcal S}(\V x\times \T T\cdot \V n)\cdot \V e_i-2\mu\int_{\Ell} \T D(\V v)\dotdot \T D(\V \varphi). 
\end{multline*}Using \eqref{eq:xtimesomegatimesx} and \eqref{eq:omegatimesxtimesomegatimesx}, the latter displayed equation is equivalent to the following one: 
\begin{multline*}
(\frac{\partial \V{\tilde v}}{\partial t}+\V{\dot \omega}_1\times \V x+\V{\tilde v}\cdot \nabla \V{\tilde v}
+2\V{\omega}_1\times \V{\tilde v}, \V \varphi) +2\mu\int_{\mathcal V}\T D(\V{\tilde v})\dotdot\T D(\V \varphi)
\\
-\lambda(\V{\dot \omega}+\V{\dot \omega}_1+\V \omega_1\times \V \omega)\cdot \V e_i=\int_{\mathcal S}(\V x\times \T T\cdot \V n)\cdot \V e_i. 
\end{multline*}
By \eqref{eq:weak_s}, we can then conclude that 
\[
\lambda(\V{\dot \omega}+\V{\dot \omega}_1+\V \omega_1\times \V \omega)\cdot \V e_i
=-\int_{\mathcal S}(\V x\times \T T\cdot \V n)\cdot \V e_i, 
\]
for all $i=1,2,3$, and this proves that also \eqref{eq:Motion}$_4$ is satisfied. 
\end{remark}

The proof of the existence of weak solutions will be accomplished by using the Galerkin method together with a suitable approximation of the liquid velocity in $\mathcal H_2(\mathcal V)$.  To this aim, we will prove  the existence of a special basis of $\mathcal H_2(\mathcal V)$ and of a special basis of $\mathcal H^2_2(\mathcal V)$. We start by noticing that, taking \eqref{eq:norm_1q} with $q=2$, the norm $\norm{\cdot}_{1,2}$ is induced by the following inner product 
\begin{equation}\label{eq:inner_product_w_1}
(\V v,\V w)_1=(\V v,\V w)+2\mu\int_{\Ell}\T D(\V v)\dotdot \T D(\V w)\; \d V,
\end{equation}
and the latter makes $\mathcal H^1_2(\mathcal V)$ a Hilbert space.  

Consider the bilinear form $a:\; \mathcal H^1_2(\mathcal V)\times \mathcal H^1_2(\mathcal V)\to \R$ defined as follows 
\begin{equation}\label{eq:a}
a(\V v,\V w):=2\mu\int_{\Ell}\T D(\V v)\dotdot\T D(\V w).
\end{equation}
By \eqref{eq:norm_1q} and \eqref{eq:korn_q} with $q=2$, $a(\cdot,\cdot)$ is a continuous and coercive bilinear form in $\mathcal H^1_2(\mathcal V)$. Thus, by Lax-Milgram Theorem, for every $\V f\in \mathcal H_2(\mathcal V)$ there exists a unique solution $\V w\in \mathcal H^1_2(\mathcal V)$ to the variational problem 
\begin{equation}\label{eq:variational_stokes}
a(\V w,\V \varphi)=( \V f,\V \varphi),\qquad \text{ for all }\V \varphi\in \mathcal H^1_2(\mathcal V),
\end{equation}
where the inner product $(\cdot,\cdot )$ has been defined in \eqref{eq:inner_product_w}. In other words, $\V w$ is a generalized solution (with respect to the inner product \eqref{eq:inner_product_w}) to the problem 
\begin{equation}\label{eq:stokes0}
\begin{split}
&\left.\begin{split}
&-\frac{1}{\tilde \rho}\diver \T T(\V{\tilde v},p)=\V g
\\
&\diver \V{\tilde v}=0
\end{split}\quad\right\}\quad \text{in }\mathcal V
\\
&\ \  \V{\tilde v}=\V 0\qquad \text{on }\mathcal C,
\end{split}
\end{equation}
where $\V g\in L^2(\mathcal V)$ is such that $\V f=\mathcal P_2 \V g$. \footnote{We recall that $\mathcal P_2$ is the orthogonal projection of $L^2_R(\mathcal V)$ onto $\mathcal H_2(\mathcal V)$ with respect to the inner product $(\cdot,\cdot)$, defined in \eqref{eq:inner_product_w} (see Section \ref{sec:functional}). } 

With an argument similar to the one that leads to the classical estimates for the Stokes problem (see \cite[Theorem IV.6.1]{Ga}), one can further show that $\V w\in \mathcal H^2_2(\mathcal V)$, and there exists a unique (up to a constant) pressure field $q\in W^{1,2}(\mathcal V)$ such that equations \eqref{eq:stokes0}$_{1,2}$ are satisfied almost everywhere on $\mathcal V$. Moreover, $(\V w, q)$ satisfies the following estimates 
\begin{equation}\label{eq:stokes_estimate}
\norm{\V w}_{2,2}+\norm{q}_{W^{1,2}(\mathcal V)}\le c\norm{\V g}_2, 
\end{equation}
with $c=c(\mu,\rho,\lambda,R,\mathcal V)$ a positive constant. 

Consider the linear operator 
\[
A:\; \V u\in \mathcal H^2_2(\mathcal V)\mapsto A\V u:=-\nu \mathcal P(\Delta \V u)\in \mathcal H_2(\mathcal V), 
\]
where $\nu:=\mu/\rho$ is the liquid coefficient of kinematic viscosity. An integration by parts implies that $a(\V u,\V w)=(A\V u,\V w)$ for all $\V u,\V w\in  \mathcal H^2_2(\mathcal V)$. Thus, $A$ is a symmetric operator. Moreover, $A$ is invertible and closed. In fact, the inverse is defined by the operator 
\[
A^{-1}:\;\V f\in\mathcal H_2(\mathcal V)\mapsto A^{-1}\V f=\V{\tilde w}\in \mathcal H^1_2(\mathcal V), 
\]
the unique solution to \eqref{eq:variational_stokes}, and $A^{-1}$ is bounded  because of \eqref{eq:stokes_estimate}. Therefore, $A$ and $A^{-1}$ are self-adjoint.  In addition, thanks to the estimate \eqref{eq:stokes_estimate}, we have the following lemma. 

\begin{lemma}
There exists a positive constant $c$ such that  
\begin{equation}\label{eq:Pdelta}
\norm{\V w}_{2,2}\le c\nu\norm{\mathcal P(\Delta \V w)}_2\qquad \text{for all }
\V w\in \mathcal H^2_2(\mathcal V). 
\end{equation}
\end{lemma}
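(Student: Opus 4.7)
The inequality \eqref{eq:Pdelta} asserts precisely that the operator $A^{-1}$, which by \eqref{eq:stokes_estimate} has already been identified as bounded from $\mathcal H_2(\mathcal V)$ into $\mathcal H^1_2(\mathcal V)$, is in fact bounded from $\mathcal H_2(\mathcal V)$ into $\mathcal H^2_2(\mathcal V)$. My plan is therefore to represent an arbitrary $\V w\in\mathcal H^2_2(\mathcal V)$ as $\V w=A^{-1}(A\V w)$ and then apply the regularity estimate \eqref{eq:stokes_estimate} to $A\V w$.

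Given $\V w\in\mathcal H^2_2(\mathcal V)$, I would set $\V f:=A\V w=-\nu\mathcal P(\Delta \V w)\in\mathcal H_2(\mathcal V)$. The identity $a(\V w,\V \varphi)=(A\V w,\V \varphi)$ holds for every $\V\varphi\in\mathcal H^2_2(\mathcal V)$ by the integration by parts recorded immediately before the lemma. Since both sides are continuous in $\V\varphi$ with respect to the $\norm{\cdot}_{1,2}$-topology, and $\mathcal H^2_2(\mathcal V)$ is dense in $\mathcal H^1_2(\mathcal V)$ (both being completions of $\mathcal D_R(\mathcal V)$), the identity extends by density to every $\V\varphi\in\mathcal H^1_2(\mathcal V)$. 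Consequently $\V w$ coincides with the unique Lax--Milgram solution of the variational problem \eqref{eq:variational_stokes} associated with the datum $\V f$; in particular, $\V w = A^{-1}\V f$.

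To conclude, I would invoke \eqref{eq:stokes_estimate} with the choice $\V g:=\V f\in L^2(\mathcal V)$, which is admissible because $\mathcal P_2\V f=\V f$ (as $\V f\in \mathcal H_2(\mathcal V)$). This yields
$$
\norm{\V w}_{2,2}\le c\norm{\V g}_2=c\norm{\V f}_2=c\,\nu\,\norm{\mathcal P(\Delta \V w)}_2,
$$
which is \eqref{eq:Pdelta}. The only step demanding some care is the density extension of the identity $a(\V w,\cdot)=(A\V w,\cdot)$ from $\mathcal H^2_2$ to $\mathcal H^1_2$ test functions; all other ingredients follow directly from the construction of $A$ and from the regularity theory already quoted for \eqref{eq:stokes0}.
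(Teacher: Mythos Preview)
Your proposal is correct and follows exactly the route the paper indicates: the paper does not give a detailed proof but simply states that the lemma holds ``thanks to the estimate \eqref{eq:stokes_estimate}'', and your argument---writing $\V w=A^{-1}(A\V w)$ and applying \eqref{eq:stokes_estimate} with $\V g=\V f=A\V w$---is precisely the natural unpacking of that remark. The density step you flag (extending $a(\V w,\cdot)=(A\V w,\cdot)$ from $\mathcal H^2_2$ to $\mathcal H^1_2$ test functions) is the only detail requiring comment, and you handle it correctly.
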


Let us consider the following inner product in $\mathcal H^2_2(\mathcal V)$
\begin{equation}\label{eq:inner_product_w_2}
(\V u,\V w)_2:=(A\V u,A\V w),\qquad\text{for all }\;\V u,\V w\in \mathcal H^2_2(\mathcal V).
\end{equation}
By \eqref{eq:Pdelta}, the associated norm is equivalent to $\norm{\cdot}_{2,2}$. We are now ready to prove the existence of a special basis.

\begin{theorem}\label{th:basis}
The spectral problem 
\begin{equation}\label{spectral}
(\V u,\V \varphi)_2=\lambda (\V u,\V \varphi)_B\qquad \text{for all}\; \V \varphi\in\mathcal H^2_2(\mathcal V)
\end{equation}
admits a denumerable number of positive eigenvalues $\{\lambda_n\}_{n\in \N}$ clustering at $+\infty$. The corresponding eigenfunctions $\{\V w_n\}_{n\in \N}$ belong to $\mathcal H^2_2(\mathcal V)$and  form an orthonormal basis in $\mathcal H_2(\mathcal V)$ with respect to the inner product $( \cdot,\cdot)_B$ defined in \eqref{eq:inner_product_B}. 

Furthermore,   $\{\V w_n/\sqrt{\lambda_n}\}_{n\in \N}$ forms an orthonormal basis in $H^2_2(\mathcal V)$ 
with respect to the inner product $(\cdot,\cdot)_2$ defined in \eqref{eq:inner_product_w_2}. 
\end{theorem}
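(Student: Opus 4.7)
The strategy is to recast the spectral problem \eqref{spectral} as an eigenvalue equation for a compact, self-adjoint, positive operator on the Hilbert space $(\mathcal{H}_2(\mathcal V),(\cdot,\cdot)_B)$, and then appeal to the Hilbert--Schmidt theorem. I would begin by introducing the solution operator $T\colon \mathcal{H}_2(\mathcal V)\to \mathcal{H}^2_2(\mathcal V)$ which, to each $\V f\in \mathcal{H}_2(\mathcal V)$, associates the unique $T\V f\in \mathcal{H}^2_2(\mathcal V)$ such that
\begin{equation*}
(T\V f,\V \varphi)_2=(\V f,\V \varphi)_B\qquad\text{for all }\V \varphi\in \mathcal{H}^2_2(\mathcal V).
\end{equation*}
Well-posedness of this definition follows from the Lax--Milgram theorem: estimate \eqref{eq:Pdelta} shows that $(\cdot,\cdot)_2$ is equivalent to $\norm{\cdot}_{2,2}^2$, so it endows $\mathcal{H}^2_2(\mathcal V)$ with a Hilbert structure in which the left-hand side is bounded and coercive; the right-hand side $\V \varphi\mapsto (\V f,\V \varphi)_B$ is continuous on $\mathcal{H}^2_2(\mathcal V)$ by Lemma \ref{le:kokr} together with the embedding $\mathcal{H}^2_2(\mathcal V)\hookrightarrow \mathcal{H}_2(\mathcal V)$. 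Composing with this embedding, I regard $T$ as a bounded linear operator from $\mathcal{H}_2(\mathcal V)$ to itself, which by Lemma \ref{lem:embeddingk} is compact.

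The next step is to show that $T$ is self-adjoint and positive on $(\mathcal{H}_2(\mathcal V),(\cdot,\cdot)_B)$. Testing the defining identity with $\V \varphi=T\V g$ gives $(T\V f,T\V g)_2=(\V f,T\V g)_B$, and swapping $\V f$ with $\V g$ yields $(T\V g,T\V f)_2=(\V g,T\V f)_B$; symmetry of $(\cdot,\cdot)_2$ then produces $(T\V f,\V g)_B=(\V f,T\V g)_B$. Choosing $\V g=\V f$ in the first identity gives $(T\V f,\V f)_B=\norm{T\V f}_2^2\geq 0$. If $T\V f=\V 0$, the defining relation forces $(\V f,\V \varphi)_B=0$ for every $\V \varphi\in \mathcal{H}^2_2(\mathcal V)$; since $\mathcal D_R(\mathcal V)\subset \mathcal{H}^2_2(\mathcal V)$ is dense in $\mathcal{H}_2(\mathcal V)$, this extends to $\mathcal{H}_2(\mathcal V)$ and forces $\V f=\V 0$. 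Hence $T$ is an injective, compact, self-adjoint, positive operator.

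The Hilbert--Schmidt spectral theorem then provides a sequence of positive eigenvalues $\mu_n\searrow 0$ of $T$ and a complete orthonormal system $\{\V w_n\}_{n\in \N}\subset \mathcal{H}^2_2(\mathcal V)$ in $(\mathcal{H}_2(\mathcal V),(\cdot,\cdot)_B)$ such that $T\V w_n=\mu_n \V w_n$. Setting $\lambda_n:=1/\mu_n$, the relation $T\V w_n=\mu_n \V w_n$ is equivalent to $(\V w_n,\V \varphi)_2=\lambda_n(\V w_n,\V \varphi)_B$ for every $\V \varphi\in \mathcal{H}^2_2(\mathcal V)$, which is precisely \eqref{spectral}; moreover $\lambda_n\to +\infty$. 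For the second claim, the spectral identity with $\V \varphi=\V w_m$ gives $(\V w_n,\V w_m)_2=\lambda_n(\V w_n,\V w_m)_B=\lambda_n\delta_{nm}$, so $\{\V w_n/\sqrt{\lambda_n}\}_{n\in \N}$ is orthonormal in $(\mathcal{H}^2_2(\mathcal V),(\cdot,\cdot)_2)$; if some $\V u\in \mathcal{H}^2_2(\mathcal V)$ were $(\cdot,\cdot)_2$-orthogonal to every $\V w_n$, the same identity would give $(\V u,\V w_n)_B=0$ for every $n$, forcing $\V u=\V 0$ by completeness in $\mathcal{H}_2(\mathcal V)$. The main technical point I expect to be careful with is the verification of self-adjointness, since the bilinear form defining $T$ mixes two distinct inner products $(\cdot,\cdot)_2$ and $(\cdot,\cdot)_B$; once this bookkeeping is handled, everything else reduces to the standard compact--self-adjoint spectral machinery.
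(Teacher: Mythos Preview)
Your proposal is correct and follows essentially the same route as the paper: define the solution operator $T$ (the paper calls it $S$) via Lax--Milgram, use the compact embedding $\mathcal H^2_2(\mathcal V)\hookrightarrow \mathcal H_2(\mathcal V)$ to obtain compactness, verify symmetry and positivity with respect to $(\cdot,\cdot)_B$, and apply the Hilbert--Schmidt theorem. Your treatment of self-adjointness is in fact slightly cleaner than the paper's, which detours through a Fredholm-type argument instead of simply noting that a bounded symmetric operator on a Hilbert space is self-adjoint.
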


\begin{proof}
By Lemma \ref{le:kokr} and Lax-Milgram Theorem, for every $\V f\in\mathcal H_2(\mathcal V)$ there exists a unique solution to the problem
\begin{equation}\label{eq:lax}
(\V u,\V \varphi)_2= (\V f,\V \varphi)_B\qquad \text{for all}\; \V \varphi\in\mathcal H^2_2(\mathcal V). 
\end{equation}
Consider the operator $S_0:\V f\in\mathcal H_2(\mathcal V)\mapsto S_0\V f:=\V u\in\mathcal H^2_2(\mathcal V)$ the unique solution to \eqref{eq:lax}. By Lemma \ref{lem:embeddingk}, the injection $J:\mathcal H^2_2(\mathcal V)\to \mathcal H_2(\mathcal V)$ is compact. Thus, the operator $S:=J\circ S_0:\V f\in\mathcal H_2(\mathcal V)\to S\V f\in\mathcal H_2(\mathcal V)$ is also compact. Moreover, $S$ is symmetric with respect to the inner product $( \cdot,\cdot)_B$ defined in \eqref{eq:inner_product_B}. In fact, for every $\V f_1$ and $\V f_2\in\mathcal H_2(\mathcal V)$, we know that there exist unique $\V u_1$ and $\V u_2\in\mathcal H^2_2(\mathcal V)$ solutions to \eqref{eq:lax} with $\V f$ replaced by $\V f_1$ and $\V f_2$, respectively. So, $S\V f_1=\V u_1$ and $S\V f_2=\V u_2$, and 
\begin{multline*}
( S\V f_1,\V f_2)_B=(\V u_1,\V f_2)_B=( \V f_2,\V u_1)_B=(\V u_2,\V u_1)_2
=(\V u_1,\V u_2)_2=( \V f_1,\V u_2)_B=( \V f_1,S\V f_2)_B.
\end{multline*}
In addition, if $\V f_1=\V f_2\equiv\V f$, then $\V u_1=\V u_2\equiv\V u$ and $( S\V f,\V f)_B=(\V u,\V u)_2$. Thus, $S$ is also a positive definite operator. Finally, $S$ is self-adjoint. To prove the latter, we notice that $S$ is a compact perturbation of the identity, and $-1$ is not an eigenvalue of $S$. Thus, $Range(S)=\mathcal H_2(\mathcal V)$ (\cite[Theorem 1, Section 5, Chapter X]{Yosida}). Since $Range(S)=\mathcal H_2(\mathcal V)$ and $S$ is symmetric, then $S$ is self-adjoint (\cite[Corollary to Theorem 1, Section 3, Chapter VII ]{Yosida}). By the Hilbert-Schmidt Theorem, $(\mathcal H_2(\mathcal V),(\cdot,\cdot)_B)$ admits an orthonormal basis of eigenfunctions $\{\V w_n\}_{n\in \N}$ of $S$ with corresponding positive eigenvalues $\{\nu_n\}_{n\in N}$ converging to $0$ as $n\to\infty$. 

Let us denote $\lambda_n:=\nu_n^{-1}>0$ for every $n\in \N$. So, $\{\lambda_n\}_{n\in \N}$ forms a sequence of eigenvalues of the problem \eqref{eq:lax} clustering at infinity as $n\to \infty$ and with corresponding eigenfunctions $\{\V w_n\}_{n\in \N}$. Indeed, by the definition of $S$, we find that $\V w_n\in\mathcal H^2_2(\mathcal V)$ and 
\[
\nu_n(\V w_n,\V \varphi)_2=( S\V w_n,\V \varphi)_2=( \V w_n,\V \varphi)_B,\quad\text{for every }\V \varphi\in\mathcal H^2_2(\mathcal V),\;n\in \N. 
\]

Finally, $\{\V w_n/\sqrt{\lambda_n}\}_{n\in \N}$ forms an orthonormal basis in $H^2_2(\mathcal V)$ with respect to the inner product $(\cdot,\cdot)_2$ defined in \eqref{eq:inner_product_w_2}. To see this, consider $\V u\in \mathcal H^2_2(\mathcal V)$ be such that $(\V w_n,\V u)_2=0$ fo every $n\in \N$. Then, 
\[
0=\nu_n\left(\V w_n,\V u\right)_2=(S\V w_n,\V u)_2=( \V w_n,\V u)_B
\] 
for every $n\in \N$, and this implies that  $\V u=0$ since $\{\V w_n\}_{n\in \N}$ forms a basis in $\mathcal H_2(\mathcal V)$ endowed with the inner product $(\cdot,\cdot)_B$. Therefore,  $\{\V w_n/\sqrt{\lambda_n}\}_{n\in \N}$ is a basis of $\mathcal H^2_2(\mathcal V)$. Furthermore, 
\[\begin{split}
(\frac{\V w_n}{\sqrt{\lambda_n}},\frac{\V w_m}{\sqrt{\lambda_m}})_2&=\frac{1}{\sqrt{\lambda_n}}
\frac{1}{\sqrt{\lambda_m}}\left(\V w_n,\V w_m\right)_2
=\frac{\lambda_n}{\sqrt{\lambda_n}\sqrt{\lambda_m}}\left(S\V w_n,\V w_m\right)_2
\\
&=\frac{\lambda_n}{\sqrt{\lambda_n}\sqrt{\lambda_m}}( \V w_n,\V w_m)_B
=\frac{\lambda_n}{\sqrt{\lambda_n}\sqrt{\lambda_m}}\delta_{nm}\qquad\text{for all }n,m\in \N. 
\end{split}\]
\end{proof}

We are now in position to prove the following result about the existence of weak solutions to \eqref{eq:Motion}. 

\begin{theorem}\label{th:weak}
For every $\V v_0\in H(\Ell)$, $\V \omega_{10},\; \V \omega_{0}\in \R^3$ such that $\V v_0=\V \omega_0\times \V x$ on $\mathcal S$, there exists at least one weak solution to \eqref{eq:Motion} such that 
\begin{enumerate}
\item $\lim_{t\to 0^+}\norm{\V{v}(t)-\V{v}_0}_2=\lim_{t\to 0^+}|\V \omega_1(t)-\V \omega_{10}|=\lim_{t\to 0^+}|\V \omega(t)-\V \omega_{0}|=0$.
\item The following decays hold
\begin{equation}\label{eq:decay0}
\lim_{t\to\infty}\norm{\V v}_{L^2(\Ell)}=0\quad \text{and}\quad\lim_{t\to \infty}|\V \omega(t)|=0. 
\end{equation}
In particular, if $\lambda_1=\lambda_2=\lambda_3$, then the rate of the previous decays is exponential. 
\item Equation \eqref{eq:conservation} holds. 
\end{enumerate}
\end{theorem}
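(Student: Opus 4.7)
The plan is to produce a weak solution by Galerkin approximation on the special basis $\{\V w_n\}_{n\in\N}\subset \mathcal H^2_2(\mathcal V)$ furnished by Theorem \ref{th:basis}, and then read off the three properties from a careful look at the energy inequality and at the ODE for $\V\Omega$. Concretely, I would set $\V{\tilde v}^{(N)}(t)=\sum_{k=1}^N c_k^{(N)}(t)\,\V w_k$, let $\V\Omega^{(N)}$ be the solution of the ODE obtained by differentiating \eqref{eq:weak}$_2$ and driven by $\V{\tilde v}^{(N)}$, and couple the two through the projection of \eqref{eq:weak}$_1$ onto $\mathrm{span}\{\V w_1,\dots,\V w_N\}$. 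The (equality version of the) energy balance \eqref{eq:Energy} together with the conserved quantity $|\T I\cdot\V\Omega^{(N)}|$ provides uniform bounds for $\V{\tilde v}^{(N)}$ in $L^\infty(0,\infty;\mathcal H_2(\mathcal V))\cap L^2(0,\infty;\mathcal H^1_2(\mathcal V))$ and for $\V\Omega^{(N)}$ in $W^{1,\infty}(0,\infty)$, so solutions are global. An Aubin--Lions argument (estimating the time derivative of $\V{\tilde v}^{(N)}$ in the dual of a higher-order space) then gives strong convergence of $\V{\tilde v}^{(N)}$ in $L^2_{\mathrm{loc}}(\mathcal H_2(\mathcal V))$ and uniform convergence of $\V\Omega^{(N)}$ on compact time intervals, which suffices to pass to the limit in all terms of \eqref{eq:weak}; the strong energy inequality \eqref{eq:strong_energy} then follows from the pre-limit identity and weak lower semicontinuity of $\mathcal E$ and of $\int_{\Ell}\|\T D(\cdot)\|^2$.

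For item (1), I would combine weak continuity of $\V{\tilde v}$ at $t=0$ with \eqref{eq:strong_energy} at $s=0$: the latter gives $\limsup_{t\to 0^+}[\mathcal E(\V{\tilde v}(t))+\V\Omega(t)\cdot\T I\cdot\V\Omega(t)]\le\mathcal E(\V{\tilde v}(0))+\V\Omega(0)\cdot\T I\cdot\V\Omega(0)$, while weak lower semicontinuity supplies the reverse inequality; one obtains norm convergence in the Hilbert space $(\mathcal H_2(\mathcal V),(\cdot,\cdot)_B)$, which by \eqref{eq:kokr} and \eqref{eq:norm_w} is exactly $\|\V v(t)-\V v_0\|_{L^2(\Ell)},\,|\V\omega(t)-\V\omega_0|\to 0$, and continuity of $\V\Omega$ and of $b(\V{\tilde v})$ at $0$ then yields the analogous statement for $\V\omega_1$. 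Item (3) follows at once from the differentiated form of \eqref{eq:weak}$_2$, namely $\T I\cdot\dot{\V\Omega}=-[\V\Omega+b(\V{\tilde v})]\times\T I\cdot\V\Omega$: dotting with $\T I\cdot\V\Omega$ yields $\frac{d}{dt}|\T I\cdot\V\Omega|^2=0$, which is \eqref{eq:conservation}.

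The main obstacle is item (2). Setting $y(t):=\mathcal E(\V{\tilde v}(t))$ and $g(t):=\V\Omega(t)\cdot\T I\cdot\V\Omega(t)$, combining \eqref{eq:strong_energy} with \eqref{eq:dissipation} yields
\[
y(t)+k\int_s^t y(\tau)\,d\tau \;\le\; y(s)+[g(s)-g(t)],\qquad k:=\tfrac{4\mu}{C},
\]
for all $t\ge s$ and a.a.\ $s\ge 0$. Taking $s=0$, letting $t\to\infty$, and using that $g$ is uniformly bounded, one already gets $y\in L^1(0,\infty)$. The key identity for the anisotropic case comes from differentiating $g$: using the ODE form of \eqref{eq:weak}$_2$ and $\V\omega_1=\V\Omega+b(\V{\tilde v})$, together with the scalar-triple-product cancellation $\V\Omega\cdot(\T I\cdot\V\Omega\times\V\Omega)=0$, one obtains $\dot g=-2\,b(\V{\tilde v})\cdot(\T I\cdot\V\Omega\times\V\Omega)$; combining with $|b(\V{\tilde v})|\le C\sqrt{y}$ (from \eqref{eq:a_varphi} and \eqref{eq:kokr}), the conservation of $|\T I\cdot\V\Omega|$ from item (3), and the uniform bound on $|\V\Omega|$, one concludes $|\dot g(\tau)|\le C''\sqrt{y(\tau)}\in L^2(0,\infty)$. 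The displayed inequality now fits the hypothesis of Lemma \ref{lem:gronwall1} with $F:=C''\sqrt{y}\ge 0$, and that lemma produces $y(t)\to 0$, which via \eqref{eq:kokr} is \eqref{eq:decay0}. In the isotropic case $\lambda_1=\lambda_2=\lambda_3$, $\T I$ is a scalar multiple of the identity, the conservation of $|\T I\cdot\V\Omega|$ forces $g\equiv\mathrm{const}$, so the same inequality holds with $F\equiv 0$ and Lemma \ref{lem:gronwall1} delivers the exponential rate. The delicate point is precisely the anisotropic decay: without the cancellation reducing $\dot g$ to a term linear in $b(\V{\tilde v})$ and hence $O(\sqrt{y})$, the forcing $g(s)-g(t)$ would only be bounded, not $L^2$-summable, and no decay for $y$ could be extracted from the Gr\"onwall lemma.
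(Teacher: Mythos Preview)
Your proposal is correct and follows essentially the same route as the paper: Galerkin approximation on the special basis of Theorem~\ref{th:basis}, uniform energy bounds, Aubin--Lions compactness with the time derivative controlled in $(\mathcal H^2_2(\mathcal V))'$, passage to the limit, and then properties (1)--(3) read off from the strong energy inequality and the ODE for $\V\Omega$. In particular, your treatment of the decay in item~(2)---computing $\dot g=-2\,b(\V{\tilde v})\cdot(\T I\cdot\V\Omega\times\V\Omega)$ via the triple-product cancellation, bounding it by $C''\sqrt{y}\in L^2(0,\infty)$, and invoking Lemma~\ref{lem:gronwall1} (with $F\equiv 0$ in the isotropic case since $\T I\cdot\V\Omega\times\V\Omega=\V 0$)---is exactly the mechanism the paper uses.
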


\begin{proof}
Consider the basis of $\mathcal H_2(\mathcal V)$ constructed in Theorem \ref{th:basis}. We look for ``approximate'' solutions 
\begin{equation}\label{eq:approximations}
\V{\tilde v}_n(\V x,t)=\sum^n_{p=1}c_{np}(t)\V w_p(\V x),\qquad \V\Omega_{n}(t)=\sum^3_{i=1}\hat c_{ni}(t)\V e_i
\end{equation}
satisfying \eqref{eq:weak}$_1$ with $\V \varphi=\V w_r$, and \eqref{eq:weak}$_2$. Set 
\[
\V{\tilde v}_0:=\left\{\begin{split}
\V v_0\qquad &\text{in }\Ell,
\\
\V \omega_0\times \V x\quad &\text{in }\mathcal B_2. 
\end{split}\right.
\]
Then, $\V{\tilde v}_0\in \mathcal H_2(\mathcal V)$. Moreover, set $\V \Omega_0:=\V \omega_{10}+b(\V{\tilde v}_0)\in \R^3$. 

Let $\V{\tilde v}_{0n}$ denote the projection of $\V{\tilde v}_0$ on the $span\{\V w_1,\dots,\V w_n\}$. Replacing \eqref{eq:approximations} in \eqref{eq:weak}, we find that $(c_{nr},\hat c_{nk})_{r=1,\dots,n,\;k=1,2,3}$ satisfy the following system of $(n+3)\times(n+3)$ first order initial value problems 
\begin{equation}\label{eq:coefficients}\begin{aligned}
&\left.\begin{aligned}
&\dot c_{nr}(t)+2\mu \sum^n_{p=1}a_{pr}c_{np}(t)+\sum^n_{p=1}\sum^n_{q=1}b_{pqr}c_{np}(t)c_{nq}(t)
\\
&\qquad\qquad +\sum^3_{i=1}\sum^n_{p=1}d_{ipr}\hat c_{ni}(t)c_{np}(t)
+\sum^3_{i=1}\sum^3_{j=1}f_{ijr}\hat c_{ni}(t)\hat c_{nj}(t)=0
\\
&c_{nr}(0)=( \V{\tilde v}_{0n},\V w_r)_B
\end{aligned}\right\}&&\text{ for }r=1,\dots,n,
\\
&\left.\begin{split}
&\ell_k\dot{\hat c}_{nk}(t)+\sum^3_{i=1}\sum^3_{j=1}g_{ijk}\hat c_{ni}(t)\hat c_{nj}(t)
+\sum^n_{p=1}\sum^3_{j=1}h_{pjk}c_{np}(t)\hat c_{nj}(t)=0
\\
&\hat c_{nk}(0)=\V\Omega_{0}\cdot \V e_k
\end{split}\right\}&&\text{ for }k=1,2,3,
\end{aligned}\end{equation} 
where the (constant) coefficients are: $\ell_k:=\V e_k\cdot\T I\cdot\V e_k>0$, 
\[\begin{split}
&a_{pr}:=2\mu\int_{\mathcal V}\T D(\V w_p)\dotdot\T D(\V w_r)\;\d V,
\\
&b_{pqr}:=\int_{\mathcal V}\tilde \rho\left[\V w_p\cdot \nabla\V{\tilde w}_q 
-2\left(\T I^{-1}\cdot\int_{\mathcal V}\tilde \rho \V x\times \V w_p\;\d V\right)\times \V{\tilde w}_q\right]\cdot \V w_r\;\d V,
\\
&d_{ipr}:=2\int_{\mathcal V}\tilde \rho (\V e_i\times\V w_p)\cdot\V w_r
-\V e_i\cdot \T I\cdot\left[\left(\T I^{-1}\cdot\int_{\mathcal V}\tilde \rho \V x\times \V w_p\;\d V\right)\times\left(\T I^{-1}\cdot\int_{\mathcal V}\tilde \rho \V x\times \V w_r\;\d V\right)\right]\d V,
\\
&f_{ijr}:=\V e_i\cdot\left[\left(\T I^{-1}\cdot\int_{\mathcal V}\tilde \rho \V x\times \V w_r\;\d V\right)\times \T I\cdot \V e_j\right], 
\quad g_{ijk}:=\V e_k\cdot (\V e_i\times \T I\cdot \V e_j),
\\
&h_{pjk}:=-\V e_k\cdot \left[\left(\T I^{-1}\cdot\int_{\mathcal V}\tilde \rho \V x\times \V w_p\;\d V\right)\times \T I\cdot \V e_j\right]. 
\end{split}\]
By the classical theory of ordinary differential equations, the initial value problem \eqref{eq:coefficients} admits a unique solution $(c_{nr},\hat c_{nk})_{r=1,\dots,n,k=1,2,3}$ defined in some interval $[0,T_n)$ with $T_n>0$. Actually, $T_n=+\infty$ for all $n\in \N$. In fact, the approximate solutions satisfy the following system of equations 
\begin{equation}\label{eq:weak_n}
\begin{aligned}
&(\frac{\d \V{\tilde v}_n}{\d t}, \V w_r)_B 
+2\mu\int_{\mathcal V}\T D(\V{\tilde v}_n)\dotdot \T D(\V w_r)\;\d V
+ b(\V w_r)\cdot[(\V \Omega_n+b(\V{\tilde v}_n))\times \T I\cdot \V \Omega_n]
\\
&\qquad\qquad\qquad\quad + \int_{\mathcal V}\tilde \rho[\V{\tilde v}_n\cdot \nabla \V{\tilde v}_n
+2(\V \Omega_n+b(\V{\tilde v}_n))\times \V{\tilde v}_n]\cdot \V w_r\;\d V=0, 
&&\text{ for all }r,n\in \N,
\\
&\T I\cdot\V{\dot \Omega}_n+[\V \Omega_n+b(\V{\tilde v}_n)]\times \T I\cdot \V \Omega_n=0,&&\text{ for all }n\in \N, 
\end{aligned}
\end{equation}
and the energy equality
\begin{equation}\label{eq:energy_n}
\frac 12 \frac{\d }{\d t}\left[\mathcal E(\V{\tilde v}_n)+\V\Omega_{n}\cdot \T I \cdot \V \Omega_{n}\right]+2\mu\norm{\T D(\V{\tilde v}_n)}^2_{L^2(\Ell)}=0\quad \text{ in }(0,T_n),\text{ for all }n\in \N. 
\end{equation}
The latter equality is obtained by multiplying \eqref{eq:coefficients}$_1$ by $c_{nr}$ and summing over $r=1,\dots,n$, by multiplying \eqref{eq:coefficients}$_3$ by $\hat c_{nk}$ and summing over $k=1,2,3$, and then adding  the resulting equations. Integrating \eqref{eq:energy_n}  in $[0,t]$, $t<T_n$, and using 
\eqref{eq:kokr}, we find that 
\begin{equation}\label{eq:apriori2}
c\norm{\V{\tilde v}_n(t)}_2^2+\V\Omega_{n}(t)\cdot \T I \cdot \V \Omega_{n}(t)+2\mu\int_0^t\norm{\T D(\V{\tilde v}_n)}^2_{L^2(\Ell)}\;\d \tau\le \norm{\V{\tilde v}_0}^2_2+\V\Omega_{0}\cdot \T I \cdot \V \Omega_{0},
\end{equation}
for all $t\in[0,T_n)$. Since the right-hand side does not depend on $n$ and $t$, necessarily $T_n=+\infty$ by the standard continuation theorem for ordinary differential equations. 
Moreover, the sequence $\{(\V{\tilde v}_n,\V \Omega_n)\}_{n\in \N}$ enjoys the following properties. 
\begin{enumerate}
\item[(a)] By \eqref{eq:apriori2}, $\{\V{\tilde v}_n\}_{n\in \N}$ is uniformly bounded in $L^\infty(0,\infty;\mathcal H_2(\mathcal V))$.
\item[(b)]  $\{\V{\tilde v}_n\}_{n\in \N}$ is uniformly bounded also in $L^2(0,\infty;\mathcal H^1_2(\mathcal V))$ by \eqref{eq:apriori2} and \eqref{eq:sobolev_korn}. 
\item[(c)] $\{\V \Omega_n\}_{n\in \N}$ is uniformly bounded in $C^0([0,\infty))\cap C^1(0,\infty)$, by \eqref{eq:weak_n}$_2$ and \eqref{eq:apriori2}. 
\item[(d)]$\{\d\V{\tilde v}_n/\d t\}_{n\in \N}$ is uniformly bounded in $L^2(0,T;(\mathcal H^2_2(\mathcal V))')$ for every $T>0$.  To show this, let $\mathbb P_n$ be the orthogonal projection of $\mathcal H^2_2(\mathcal V)$ onto $span\{\V w_1/\sqrt{\lambda_1},\dots,\V w_n/\sqrt{\lambda_n}\}$. By Theorem \ref{th:basis}, for every $\V w\in\mathcal H^2_2(\mathcal V)$ one has 
\begin{equation}\label{eq:mathbbPn}
\V w=\sum^\infty_{\ell=0}(\V w,\V w_\ell)_2\V w_\ell\qquad \text{and}\qquad\norm{\mathbb P_n\V w}_{2,2}\le \norm{\V w}_{2,2},\quad\text{for all }n\in \N.
\end{equation}
For every $\V w\in\mathcal H^2_2(\mathcal V)$, 
\[\begin{split}
(\frac{\d \V{\tilde v}_n}{\d t}, \V w)_B&=(\frac{\d \V{\tilde v}_n}{\d t}, \mathbb P_n\V w)_B
=-2\mu\int_{\mathcal V}\T D(\V{\tilde v}_n)\dotdot \T D(\mathbb P_n\V w)\;\d V
\\
&\quad - \int_{\mathcal V}\tilde \rho[\V{\tilde v}_n\cdot \nabla \V{\tilde v}_n
+2(\V \Omega_n+b(\V{\tilde v}_n))\times \V{\tilde v}_n]\cdot (\mathbb P_n\V w)\;\d V
\\
&\quad-b(\mathbb P_n\V w)\cdot[(\V \Omega_n+b(\V{\tilde v}_n))\times \T I\cdot \V \Omega_n]\quad 
\text{ for all }n\in \N. 
\end{split}\]
We recall the following classical estimates that can be obtained using an integration by parts together with H\"older inequality, \eqref{eq:korn_q} and \eqref{eq:sobolev_korn}. For every $\V u_1, \V u_2\in \mathcal H^1_2(\mathcal V)$ and $\V z\in \mathcal H^2_2(\mathcal V)$ one has 
\begin{multline}\label{eq:nonlinear}
\left|\int_{\mathcal V}\tilde \rho(\V u_1\cdot\nabla \V u_2)\cdot \V z\; \d V\right|= \left|\int_{\mathcal V}\tilde \rho(\V u_1\cdot\nabla\V z)\cdot \V u_2\; \d V\right|
\\
\le \norm{\V u_1}_6\norm{\nabla \V z}_3\norm{\V u_2}_2
\le c\norm{\T D(\V u_1)}_{L^2(\Ell)}\norm{\V z}_{2,2}\norm{\V u_2}_2. 
\end{multline}
Using again H\"older inequality, \eqref{eq:nonlinear} and \eqref{eq:apriori2}, we find that 
\begin{multline*}
\left|(\frac{\d \V{\tilde v}_n}{\d t}, \V w)_B\right|=\left|(\frac{\d \V{\tilde v}_n}{\d t}, \mathbb P_n\V w)_B\right|
\le c_1\norm{\T D(\V{\tilde v}_n)}_{L^2(\Ell)} \norm{\V w}_{2,2}
\\
+c_2\norm{ \T D(\V{\tilde v}_n)}_{L^2(\Ell)}\norm{\V w}_{2,2}\norm{\V{\tilde v}_n}_2
+c_3\norm{\V{\tilde v}_n}_2\norm{\V w}_{2,2}
+c_4|\V \Omega_n|\norm{\V w}_{2,2}. 
\end{multline*}
Since the previous estimates hold for every $\V w\in \mathcal H^2_2(\mathcal V)$ and $\mathcal H_2(\mathcal V)\hookrightarrow (\mathcal H^2_2(\mathcal V))'$ , by properties (a), (b) and (c), we can conclude that the sequence $\{\d\V{\tilde v}_n/\d t\}_{n\in \N}$ belongs to a bounded set of $L^2(0,T;(\mathcal H^2_2(\mathcal V))')$ for every $T>0$. 
\end{enumerate}
Properties (b) and (d) imply that the sequence $\{\V{\tilde v}_n\}_{n\in \N}$ remains in a bounded set of the following space 
\[
\{\V u\in L^2(0,T;\mathcal H^1_2(\mathcal V)):\; \d \V u/\d t\in L^2(0,T;\mathcal (H^2_2(\mathcal V))')  \}.
\]
Moreover,  $\mathcal H^1_2(\mathcal V))\hookrightarrow \mathcal H_2(\mathcal V)\hookrightarrow (H^2_2(\mathcal V))'$, with the first embedding being compact (Lemma \ref{lem:embedding1}). Taking into account all these features and properties (a)-(d), we can claim the existence of functions 
\[\begin{split}
&\V{\tilde v}\in L^\infty(0,\infty;\mathcal H_2(\mathcal V))\cap L^2(0,\infty;\mathcal H^1_2(\mathcal V)),
\\
&\V \Omega\in C^0([0,\infty))\cap C^1(0,\infty),
\end{split}\]
and subsequences, again denoted by $\{\V{\tilde v}_n\}_{n\in\N}$ and $\{\V \Omega_n\}_{n\in\N}$, such that 
\begin{equation}\label{eq:convergence}\begin{split}
&\lim_{n\to \infty}\V{\tilde v}_n=\V{\tilde v}\quad\text{weakly$-*$ in }L^\infty(0,\infty;\mathcal H_2(\mathcal V)),
\\
&\lim_{n\to \infty}\V{\tilde v}_n=\V{\tilde v}\quad\text{weakly in }L^2(0,\infty;\mathcal H^1_2(\mathcal V)),
\\
&\lim_{n\to \infty}\V \Omega_n=\V \Omega\quad\text{uniformly in every closed interval }J\subset[0,\infty),
\\
&\lim_{n\to \infty}\V{\tilde v}_n=\V{\tilde v}\quad\text{strongly in }L^2(0,T;\mathcal H_2(\mathcal V))\quad \text{for every }T>0.
\end{split}\end{equation}
The latter convergence is a consequence of properties (b) and (d), and of the Aubin-Lions compactness lemma (see \cite[Theorem 2.1, Chapter III]{Temam}).  

To conclude the proof of the theorem, we need to show that the couple $(\V{\tilde v},\V \Omega)$ satisfies \eqref{eq:weak}. In other words, we need to pass to the limit as $n\to \infty$ in the following equation obtained from \eqref{eq:weak_n}, after an integration with respect to time:  
\begin{equation}\label{eq:weak_nr}
\begin{split}
(\V{\tilde v}_n(t),\V \varphi)_B-(\V{\tilde v}_n(0),\V \varphi)_B 
&+2\mu\int^t_0\int_{\mathcal V}\T D(\V{\tilde v}_n)\dotdot \T D(\V \varphi)\;\d V\d \tau
\\
&+ \int^t_0\int_{\mathcal V}\tilde \rho[\V{\tilde v}_n\cdot \nabla \V{\tilde v}_n
+2(\V \Omega_n+b(\V{\tilde v}_n))\times \V{\tilde v}_n]\cdot\V \varphi\;\d V\d \tau
\\
&+ b(\V \varphi)\cdot\int^t_0 [\V \Omega_n+b(\V{\tilde v}_n)]\times \T I\cdot \V \Omega_n\;\d \tau=0,
\\
\T I\cdot\V{\Omega}_n(t)-\T I\cdot\V{\Omega}_n(0)&+\int^t_0[\V \Omega_n+b(\V{\tilde v}_n)]\times \T I\cdot \V \Omega_n\;\d \tau=0, \text{ for all }t\in[0,\infty).
\end{split}
\end{equation} 
Thanks to \eqref{eq:convergence}, the convergence of both linear and nonlinear terms in the above equations follows from standard arguments. We have then shown that, for every $T>0$, the couple $(\V{\tilde v},\V \Omega)$ satisfies \eqref{eq:weak} for every $\V \varphi\in \mathcal H^2_2(\mathcal V)$ and all $t\in [0,T)$. Since $\mathcal H^2_2(\mathcal V)$ is dense in $\mathcal H^1_2(\mathcal V)$, \eqref{eq:weak}$_1$ is also satisfied for every $\V \varphi\in \mathcal H^1_2(\mathcal V)$. Moreover, $\V{\tilde v}\in C_w([0,T);\mathcal H_2(\mathcal V))$ since it satisfies \eqref{eq:weak} in $[0,T)$ for every $T>0$. In fact, from the weak formulation, one can easily show that if $t_0\in [0,T)$, then for every $\varepsilon>0$ there exists $\delta=\delta(\varepsilon)>0$ such that for every $t\in (t_0-\delta,t_0+\delta)$:
\[
|(\V{\tilde v}(t)-\V{\tilde v}(t_0),\V \varphi)_B|<\varepsilon,\qquad \text{for all }\;\V \varphi\in\mathcal H^1_2(\mathcal V). 
\]
By the density of $\mathcal H^1_2(\mathcal V)$ in $\mathcal H_2(\mathcal V)$, the latter property continues to hold for every $\V \varphi\in\mathcal H_2(\mathcal V)$. 
In addition, taking the limit as $n\to \infty$ in \eqref{eq:apriori2} and using \eqref{eq:convergence}$_{2,3,4}$ with $\V{\tilde v}\in C_w([0,T);\mathcal H_2(\mathcal V))$, we can conclude that $(\V{\tilde v},\V \Omega)$ satisfies the strong energy inequality \eqref{eq:strong_energy}. 

Let us prove properties {\em 1.} to {\em 3.} in the statement. Let $\V \omega_1:=\V \Omega+b(\V{\tilde v})$ and recall that $\V{\tilde v}$ has the following representation
\[
\V{\tilde v}=\left\{\begin{split}
\V v\qquad &\text{in }\;\Ell,
\\
\V \omega\times \V x\;\quad &\text{in }\,\mathcal B_2.
\end{split}\right.
\]
Then, $(\V v,\V \omega_1,\V \omega)$ satisfy \eqref{eq:regularity_weak}. 

Recall \eqref{eq:energy_d} and \eqref{eq:kokr}, thus property {\em 1.} immediately follows from the strong energy inequality \eqref{eq:strong_energy} and the lower semicontinuity at zero of the map: $t\to \norm{\V v(t)}_2^2$.  

For what concerns the decays stated in property {\em 2.}, by \eqref{eq:strong_energy} and \eqref{eq:dissipation}, for all $t\ge s$ and a.a. $s\ge 0$ including $s=0$, we find that 
\[
\mathcal E(\V{\tilde v}(t))+C\mu\int^t_s\mathcal E(\V{\tilde v}(\tau))\;\d \tau\le\mathcal E(\V{\tilde v}(s))+G(t,s),
\]
where $G(t,s):=\V \Omega(t)\cdot \T I\cdot \V \Omega(t)-\V \Omega(s)\cdot \T I\cdot \V \Omega(s)$. By \eqref{eq:weak}$_2$, \eqref{eq:strong_energy} with $s=0$ and H\"older inequality, we find that 
\[
G(t,s)= 2\int^t_s\V \Omega\cdot[b(\V{\tilde v})\times \T I\cdot \V \Omega]\;\d \tau\le c_1\int^t_s F(\tau)\; \d \tau
\]
where $c_1$ is a positive constant (independent of time) and $F(t):=\norm{\V{\tilde v}(t)}_2$. Hence, \eqref{eq:decay0} 
follows by Lemma \ref{lem:gronwall1}. In particular, if $\lambda_1=\lambda_2=\lambda_3$, then $\V \Omega\cdot[b(\V{\tilde v})\times \T I\cdot \V \Omega]=0$, and also the exponential decay follows. 

Finally, we obtain \eqref{eq:conservation} from \eqref{eq:weak}$_2$ by dot-multiplying it by $\T I\cdot\V \Omega$ and 
recalling that $\V \Omega=\V \omega_1-~b(\V{\tilde v})$. 
\end{proof}

Due to the coupling with the Navier-Stokes equations, also for the problem at hand, it is an open problem whether weak solutions constructed in Theorem \ref{th:weak} continuously depend upon the initial data, and are in particular unique. Nevertheless, such property holds for any weak solution possessing a further regularity, as for the classical Navier-Stokes case.

\begin{theorem}\label{th:continuous_dependence}
Consider two weak solutions $(\V v,\V \omega_1,\V \omega)$ and $(\V v^*,\V \omega_1^*,\V \omega^*)$ to \eqref{eq:Motion} corresponding to initial data $(\V v_0,\V \omega_{10},\V \omega_0)$ and $(\V v_0^*,\V \omega_{10}^*,\V \omega_0^*)$, respectively. Suppose that there exists a time $T>0$ such that 
\begin{equation}\label{eq:serrin}
\V v^*\in L^p(0,T;L^q(\Ell)), \qquad \frac 2p+\frac 3q=1,\quad \text{for some }\; q>3. 
\end{equation}
Then, the following properties hold. 
\begin{itemize}
\item[a)] There exists a positive constant $c$ depending only on $\norm{\V v^*}_{L^\infty(0,T;L^2(\Ell))}$, $\norm{\V v^*}_{L^p(0,T;L^q(\Ell))}$, $\max_{t\in[0,T]}|\V \omega_1^*(t)|$ and $\max_{t\in[0,T]}|\V \omega^*(t)|$ such that 
\begin{multline*}
\norm{\V v(t)-\V v^*(t)}_{L^2(\Ell)}+|\V \omega_1(t)-\V \omega_1^*(t)|+|\V \omega(t)-\V \omega^*(t)|
\\
\le c\left( \norm{\V v_0-\V v^*_0}_{L^2(\Ell)}+|\V \omega_{10}-\V \omega_{10}^*|+|\V \omega_0-\V \omega^*_0|\right),
\quad \text{ for all }t\in[0,T]. 
\end{multline*}
\item[b)] If $(\V v_0,\V \omega_{10},\V \omega_0)=(\V v_0^*,\V \omega_{10}^*,\V \omega_0^*)$, then 
$(\V v,\V \omega_1,\V \omega)=(\V v^*,\V \omega_1^*,\V \omega^*)$ a.e. in $[0,T]\times \Ell$. 
\end{itemize}
\end{theorem}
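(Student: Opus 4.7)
My plan is to adapt the classical Serrin weak--strong uniqueness argument to the unified formulation on $\mathcal V$ with density $\tilde\rho$. I introduce the difference variables $\V{\tilde V}:=\V{\tilde v}-\V{\tilde v}^*$ and $\V \Xi:=\V \Omega-\V \Omega^*$; since both triples share the coupling structure, $\V{\tilde V}(t)\in\mathcal H^1_2(\mathcal V)$ for a.a. $t\in(0,T)$, and $\V{\tilde V}=\V 0$ on $\mathcal C$. Setting
\[
E(t):=\mathcal E(\V{\tilde V}(t))+\V \Xi(t)\cdot \T I\cdot \V \Xi(t),
\]
the goal is a differential inequality of the form $\dot E+c\mu\norm{\T D(\V{\tilde V})}_{L^2(\Ell)}^2\le \Phi(t)\,E(t)$ with $\Phi\in L^1(0,T)$ depending only on the norms listed in the statement; combined with Lemma \ref{le:kokr}, this delivers (a) via Gronwall, and (b) follows by taking equal initial data.

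The inequality for $E$ comes from adding four contributions: the strong energy inequality \eqref{eq:strong_energy} for $(\V{\tilde v},\V \Omega)$; the corresponding energy equality for $(\V{\tilde v}^*,\V \Omega^*)$, which holds because the Serrin condition \eqref{eq:serrin} upgrades that solution to the strong class (so no energy defect occurs); and the two cross-identities obtained by using $\V{\tilde v}^*$ as test function in the weak formulation \eqref{eq:weak} for $\V{\tilde v}$, and vice versa. The skew-symmetry of the convective trilinear form for divergence-free fields vanishing on $\mathcal C$ (valid even with the piecewise-constant $\tilde\rho$, because $\V{\tilde v}\cdot \V n=0$ on $\mathcal S$) collapses the several cubic terms into the single critical contribution
\[
I(t):=-\int_{\mathcal V}\tilde\rho\,(\V{\tilde V}\cdot \nabla \V{\tilde v}^*)\cdot \V{\tilde V}\,\d V,
\]
plus Coriolis-type remainders coupling $\V \Xi$, $b(\V{\tilde V})$, $\V \Omega^*$ and $\V{\tilde v}^*$, which are bounded quadratically in $E(t)$ with coefficients controlled by $\max_{[0,T]}(|\V \omega_1^*|+|\V \omega^*|)$ and $\norm{\V v^*}_{L^\infty(0,T;L^2(\Ell))}$.

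For $I(t)$ I would interpolate between $L^2(\mathcal V)$ and $L^6(\mathcal V)$ (the latter via Lemma \ref{le:sobolev_korn}) to obtain
\[
\norm{\V{\tilde V}}_{L^{2q/(q-2)}(\mathcal V)}\le c\,\norm{\V{\tilde V}}_2^{1-3/q}\,\norm{\T D(\V{\tilde V})}_{L^2(\Ell)}^{3/q},
\]
and then apply H\"older's inequality and Young's inequality with conjugate exponents $2q/(q+3)$ and $p=2q/(q-3)$ (these are conjugate exactly when $2/p+3/q=1$) to produce
\[
|I(t)|\le \varepsilon\,\norm{\T D(\V{\tilde V})}_{L^2(\Ell)}^2+C(\varepsilon)\,\norm{\V{\tilde v}^*(t)}_{L^q(\mathcal V)}^p\,\norm{\V{\tilde V}(t)}_2^2.
\]
The contribution of $\mathcal B_2$ to the $L^q$ norm of $\V{\tilde v}^*$ is absorbed via $|\V \omega^*\times \V x|\le R|\V \omega^*|$. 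Choosing $\varepsilon$ small enough to absorb the dissipation term into the $4\mu\int\norm{\T D(\V{\tilde V})}_{L^2(\Ell)}^2\,\d \tau$ coming from \eqref{eq:strong_energy} closes the differential inequality, and Gronwall with $\Phi(t)=C\norm{\V{\tilde v}^*(t)}_{L^q(\Ell)}^p+{}$(bounded Coriolis contribution) finishes the proof.

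The main technical obstacle I expect is justifying the cross-testing: since $\V{\tilde v}^*(t)$ has only the weak-solution regularity from Definition \ref{def:weak} augmented by \eqref{eq:serrin}, using it directly as a test function in \eqref{eq:weak} requires rewriting the weak formulation in a time-differentiated form in a suitable dual space, mollifying $\V{\tilde v}^*$ in time (for instance by Steklov averages or by truncation along the Galerkin basis of Theorem \ref{th:basis}), and passing to the limit while preserving the moving-boundary trace $\V{\tilde v}^*=\V \omega^*\times \V x$ on $\mathcal S$. Once this technical step is carried out, the remaining estimates are of the classical Serrin type.
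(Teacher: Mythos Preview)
Your proposal is correct and follows essentially the same route as the paper: extend to $\mathcal V$, set $\V w=\V{\tilde v}-\V{\tilde v}^*$ and $\V\xi=\V\Omega-\V\Omega^*$, combine the strong energy inequalities with the cross-identity, estimate the critical trilinear term via the Serrin interpolation, and conclude by Gr\"onwall and Lemma~\ref{le:kokr}. The paper handles the cross-testing exactly as you anticipate (time-mollification plus approximation along a basis, Lemmas~\ref{lem:equivalent_weak}--\ref{lem:approximation}); note also that the paper simply uses the strong energy \emph{inequality} for both solutions---your upgrade of $(\V{\tilde v}^*,\V\Omega^*)$ to an energy equality is not needed.
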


To show the previous theorem, we need some preliminary lemmas. Their proofs are standard, they are similar to the ones provided in \cite[Chapter 3]{Ma}. 

\begin{lemma}\label{lem:equivalent_weak}
Consider a weak solution $(\V v,\V \omega_1,\V \omega)$ of \eqref{eq:Motion} and the extension $\V{\tilde v}$ of $\V v$ defined in \eqref{eq:extension}. Then,  $\V{\tilde v}$  can be redefined on a set of zero Lebesgue measure in such a way that $\V{\tilde v}\in L^2_R(\mathcal V)$ for all $t\in [0,T)$ and it satisfies the following equation   
\begin{equation}\label{eq:weak_v_d}\begin{split}
&-\int^t_s\left[(\V{\tilde v},\frac{\partial \V \phi}{\partial t})_B-b\left(\frac{\partial \V \phi}{\partial t}\right)\cdot \T I \cdot \V \Omega\right]\d \tau
\\
&\qquad+(\V{\tilde v}(t), \V \phi(t))_B-b(\V \phi(t))\cdot\T I \cdot\V \Omega(t)-(\V{\tilde v}(s), \V \phi(s))_B+b(\V \phi(s))\cdot\T I \cdot\V \Omega(s)
\\
&\qquad+2\mu\int^t_s\int_{\mathcal V}\T D(\V{\tilde v})\dotdot \T D(\V \phi)\;\d V\d \tau
+ \int^t_s\int_{\mathcal V}\tilde \rho[\V{\tilde v}\cdot \nabla \V{\tilde v}+2(\V \Omega+b(\V{\tilde v}))\times \V{\tilde v}]\cdot \V \phi\;\d V\d \tau
=0, 
\end{split}\end{equation}
for all $0\le s\le t$, $t<T$ and $\V \phi\in\mathcal D_R(\mathcal V_T)$. 
\end{lemma}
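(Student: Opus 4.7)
The plan is to obtain \eqref{eq:weak_v_d} from \eqref{eq:weak} by an integration-by-parts-in-time argument, using the absolute continuity in $t$ of the relevant scalar pairings that is built into the weak formulation. The redefinition claim is immediate: since $\V{\tilde v}\in C_w([0,T);\mathcal H_2(\mathcal V))\subset C_w([0,T);L^2_R(\mathcal V))$, every $t$ admits a canonical representative of $\V{\tilde v}(t)$ in $L^2_R(\mathcal V)$, and altering $\V{\tilde v}$ on a Lebesgue-null set of $[0,T)$ makes this representative pointwise in $t$.

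The core step is to carry out the computation for separated test functions $\V\phi(\V x,\tau)=\alpha(\tau)\V\varphi(\V x)$, with $\alpha\in C^1([0,T))$ compactly supported in $[0,T)$ and $\V\varphi\in\mathcal D_R(\mathcal V)$. For such $\V\varphi$, the first equation of \eqref{eq:weak} exhibits $\tau\mapsto(\V{\tilde v}(\tau),\V\varphi)_B$ as the constant $(\V{\tilde v}(0),\V\varphi)_B$ minus an absolutely continuous integral, whose a.e.\ derivative is the spatial integrand of the viscous, convective, Coriolis and $b$-torque terms in \eqref{eq:weak} evaluated at time $\tau$; at the same time, \eqref{eq:weak}$_2$ gives $\T I\cdot\dot{\V\Omega}=-[\V\Omega+b(\V{\tilde v})]\times\T I\cdot\V\Omega$ a.e. Multiplying the first of these by $\alpha$, the second by $\alpha\,b(\V\varphi)$, subtracting and integrating from $s$ to $t$, the scalar integration-by-parts formula -- valid because $\alpha\in C^1$ and both $(\V{\tilde v},\V\varphi)_B$ and $\T I\cdot\V\Omega$ are absolutely continuous in $\tau$ -- produces the boundary contributions $(\V{\tilde v}(\tau),\V\phi(\tau))_B-b(\V\phi(\tau))\cdot\T I\cdot\V\Omega(\tau)$ at $\tau=t$ and $\tau=s$, together with the integral $\int_s^t[(\V{\tilde v},\partial_\tau\V\phi)_B-b(\partial_\tau\V\phi)\cdot\T I\cdot\V\Omega]\,d\tau$, since $\partial_\tau\V\phi=\alpha'\V\varphi$. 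The remaining terms on the right-hand side are exactly the viscous and nonlinear integrals that appear in \eqref{eq:weak_v_d}.

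To pass from separated test functions to a general $\V\phi\in\mathcal D_R(\mathcal V_T)$, I would use a Riemann-sum argument in $\tau$: partition $[s,t]$ into subintervals of width $\Delta\tau\to 0$, apply the identity just derived on each subinterval with the time-frozen snapshot $\V\phi(\cdot,\tau_k)$, and sum. The limit is justified by the uniform continuity of $\tau\mapsto\V\phi(\cdot,\tau)$ in $C^1(\overline{\mathcal V})$, the weak continuity of $\V{\tilde v}$ in $\mathcal H_2(\mathcal V)$, and the uniform bounds on $(\V{\tilde v},\V\Omega)$ granted by Definition \ref{def:weak}. The main technical obstacle is to justify this passage while simultaneously preserving the divergence-free condition on $\mathcal V$, the no-slip condition on $\mathcal C$, and the rigid-rotation structure on a neighborhood of $\mathcal B_2$; the reason to prefer a Riemann-sum approach over a space-time mollification is precisely that every snapshot $\V\phi(\cdot,\tau_k)$ of $\V\phi\in\mathcal D_R(\mathcal V_T)$ already lies in $\mathcal D_R(\mathcal V)$, so all three structural constraints are inherited automatically.
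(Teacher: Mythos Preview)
Your argument is essentially correct; the paper itself omits the proof, labeling it ``standard'' and referring to \cite[Chapter 3]{Ma}, so there is no detailed comparison to make. The two-step strategy --- first handle separated test functions $\alpha(\tau)\V\varphi(\V x)$ via the absolute continuity of $\tau\mapsto(\V{\tilde v}(\tau),\V\varphi)_B$ and $\tau\mapsto b(\V\varphi)\cdot\T I\cdot\V\Omega(\tau)$ furnished by \eqref{eq:weak}, then pass to general $\V\phi\in\mathcal D_R(\mathcal V_T)$ --- is sound, and your observation that the $b(\V\varphi)\cdot[(\V\Omega+b(\V{\tilde v}))\times\T I\cdot\V\Omega]$ term in \eqref{eq:weak}$_1$ cancels against the contribution from \eqref{eq:weak}$_2$ is exactly what makes \eqref{eq:weak_v_d} come out without that term.

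One point deserves to be made explicit. In the Riemann-sum step you apply the separated-variable identity on each $[\tau_k,\tau_{k+1}]$ with $\alpha\equiv 1$ and $\V\varphi=\V\phi(\cdot,\tau_k)$; but with $\alpha'\equiv 0$ that identity contains no $\partial_\tau\V\phi$ contribution at all. The term $\int_s^t(\V{\tilde v},\partial_\tau\V\phi)_B\,d\tau$ only emerges after an Abel summation (discrete integration by parts) on the boundary differences $\sum_k[(\V{\tilde v}(\tau_{k+1}),\V\varphi_k)_B-(\V{\tilde v}(\tau_k),\V\varphi_k)_B]$, which rewrites them as the two endpoint evaluations plus $-\sum_k(\V{\tilde v}(\tau_k),\V\varphi_k-\V\varphi_{k-1})_B$, a Riemann sum for the desired integral. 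You should state this Abel step, and likewise for the $b(\cdot)\cdot\T I\cdot\V\Omega$ terms. The convergence of these Riemann sums then follows exactly from the ingredients you list: weak continuity of $\V{\tilde v}$, $C^1$-continuity of $\tau\mapsto\V\phi(\cdot,\tau)$, and the $L^\infty$ bound on $\V{\tilde v}$. The more common textbook route uses density of finite sums of tensor products in a suitable topology; your Riemann-sum approach has the genuine advantage that it never needs to establish such a density result for the constrained space $\mathcal D_R(\mathcal V_T)$, since every snapshot $\V\phi(\cdot,\tau_k)$ is automatically admissible.
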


For a Banach space $X$, we will consider the {\em (time-)mollification} $\V w_h$ of a function $\V w\in L^2(0,T;\mathcal H_2^1(\mathcal V))$ as the function defined by 
\[
\V w_h(\V x,t):=\int^T_0 j_h(t-s)\V w(\V x,s)\; \d s\in C^\infty([0,T];\mathcal H_2^1(\mathcal V)),
\]
where $\{j_h\in C^\infty_0(-h,h):\, 0<h<T\}$ is a family of mollifiers. Then, the following lemma is an immediate consequence of \cite[Theorem 2.29]{Adams} and \cite[Lemma 1.3.3. \& Remark 1.3.8 (b)]{ArBaHiNe}. 

\begin{lemma}\label{lem:mollification}
Let $H$ be a Hilbert space with the inner product $\langle\cdot,\cdot\rangle$. If $\V u\in C_w([0,T),H)$, then 
\[
\lim_{h\to 0}\langle\V u-\V u_h,\V \psi\rangle=0
\] 
uniformly on every closed interval $J\subset [0,T)$ and for every $\V \psi\in H$. 

Let $X$ be a Banach space. For every $\V w\in L^p(0,T;X)$, $1\le p<\infty$, 
\[
\lim_{h\to 0}\norm{\V w-\V w_h}_{L^p(0,T;X)}=0.
\]
Moreover, let $\{\V w_n\}_{n\in \N}$be a sequence converging to $\V w$ in $L^p(0,T;X)$. Then, 
\[
\lim_{n\to \infty}\norm{(\V v_n)_h-\V w_h}_{L^p(0,T;X)}=0,\qquad \text{ for all }0<h<T. 
\]
\end{lemma}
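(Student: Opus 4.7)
The lemma contains three independent claims, all of which rest on standard properties of convolution with a mollifier family $\{j_h\}$, with some attention paid to the fact that the time integral is truncated to $[0,T]$ rather than taken over all of $\R$. I treat each claim separately.

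For the first claim, the plan is to reduce everything to the scalar map $g(t) := \langle \V u(t),\V \psi\rangle$, which by the very definition of $C_w([0,T),H)$ is continuous on $[0,T)$ for each fixed $\V \psi$. Fix a closed interval $J \subset [0,T)$ and choose $\delta>0$ so that a $\delta$-neighborhood of $J$ remains inside $[0,T)$; on the closure of that neighborhood $g$ is uniformly continuous. Then for $h<\delta$ and $t \in J$ sufficiently far from the endpoints of $[0,T]$, the truncated convolution coincides with the full one, and $\int_{-h}^{h} j_h(\tau)\,d\tau = 1$ gives
\[
\langle \V u(t) - \V u_h(t),\V \psi\rangle \;=\; \int_{-h}^{h} j_h(\tau)\,[g(t) - g(t-\tau)]\, d\tau,
\]
whose modulus is bounded by $\sup_{|\tau|<h}|g(t)-g(t-\tau)|$. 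Uniform continuity of $g$ makes the right-hand side tend to zero uniformly in $t\in J$.

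For the second claim the plan is the classical three-$\varepsilon$ argument adapted to Bochner spaces. First, Minkowski's inequality for vector-valued integrals gives the uniform operator bound $\norm{\V w_h}_{L^p(0,T;X)} \le \norm{j_h}_{L^1(\R)}\norm{\V w}_{L^p(0,T;X)}$. Second, for a test vector $\V \phi \in C_c((0,T);X)$ uniform continuity of $\V \phi$ with values in $X$ gives $\sup_{t}\norm{\V \phi(t)-\V \phi_h(t)}_X \to 0$, whence $\norm{\V \phi - \V \phi_h}_{L^p(0,T;X)} \to 0$. Third, approximate an arbitrary $\V w \in L^p(0,T;X)$ by such a $\V \phi$ in $L^p$-norm (this density being the Bochner analogue of \cite[Theorem 2.29]{Adams}) and combine the three estimates. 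The third claim then drops out as a corollary of the boundedness step: by linearity of mollification, $(\V w_n)_h - \V w_h = (\V w_n - \V w)_h$, so
\[
\norm{(\V w_n)_h - \V w_h}_{L^p(0,T;X)} \;\le\; \norm{j_h}_{L^1(\R)}\,\norm{\V w_n - \V w}_{L^p(0,T;X)} \;\longrightarrow\; 0
\]
as $n\to\infty$, for each fixed $h\in(0,T)$.

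The one technical nuance across all three items is the truncation of the convolution window to $[0,T]$: near the time endpoints $\V w_h(t)$ is not a full two-sided convolution, so in particular $\int_0^T j_h(t-s)\,ds$ need not equal $1$. For assertions two and three this is irrelevant, because the discrepancy occurs on a set of Lebesgue measure $O(h)$ and is absorbed by the Minkowski bound. For the first assertion it is precisely why $J$ must be closed in the half-open interval $[0,T)$ and, in practice, bounded away from both endpoints; within that constraint the argument above is routine. This boundary bookkeeping is the only substantive obstacle; everything else follows from the references cited.
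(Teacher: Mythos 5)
The paper does not actually prove this lemma; it simply declares it an ``immediate consequence'' of \cite[Theorem 2.29]{Adams} (density of smooth functions, used for the $L^p$ statements) and \cite[Lemma 1.3.3 \& Remark 1.3.8(b)]{ArBaHiNe} (vector-valued mollification). Your proposal replaces those citations with a direct, self-contained argument: reduction of the weak-continuity statement to the scalar function $g(t)=\langle \V u(t),\V\psi\rangle$ plus uniform continuity on compacta, and the classical bound--approximate--densify scheme (Minkowski/Young for the uniform operator bound, $C_c((0,T);X)$ for the dense class) for the $L^p$ statements, with the third assertion falling out of linearity and the uniform bound. All of this is correct and is, if anything, more informative than what the paper offers; the identity $(\V w_n)_h-\V w_h=(\V w_n-\V w)_h$ together with $\norm{(\cdot)_h}_{L^p\to L^p}\le 1$ is exactly the right one-line argument for the last claim.

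The one point worth pinning down is the left endpoint in the first assertion, which you flag but resolve by quietly strengthening the hypothesis on $J$. As stated, ``every closed interval $J\subset[0,T)$'' includes $J=[0,a]$, and at $t=0$ the truncated convolution $\V u_h(0)=\int_0^T j_h(-s)\V u(s)\,\d s$ carries total kernel mass $\int_{-h}^{0}j_h$, which for a symmetric mollifier tends to $1/2$, not $1$; hence $\langle \V u(0)-\V u_h(0),\V\psi\rangle\to \tfrac12\langle\V u(0),\V\psi\rangle$, and the claim genuinely fails at $t=0$ unless $\V u(0)=\V 0$ or the $j_h$ are one-sided (supported in $(-h,0)$, so that the window at $t=0$ still captures full mass from $s\in(0,h)$). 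So your restriction to $J$ bounded away from the endpoints is not merely ``practical bookkeeping'': it is forced by the two-sided normalization, and the lemma as literally written needs either that restriction or a one-sided convention on $\supp j_h$. This is a defect of the statement rather than of your argument, but since the downstream identity \eqref{eq:d_mollification} evaluates mollified quantities at $t=0$, it would be worth making the convention explicit rather than leaving it implicit.
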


Moreover, the following result holds. 
\begin{lemma}
For every $\V u, \V w\in C_w([0,T);L^2_R(\mathcal V))\cap L^2(0,T;L^2_R(\mathcal V))$
\begin{equation}\label{eq:d_mollification}
\lim_{h\to 0}\int^t_0\left(( \V u,\frac{\partial \V w_h}{\partial \tau})_B+(\frac{\partial \V u_h}{\partial \tau},\V w)_B\right)\; \d \tau=( \V u(t),\V w(t))_B-(\V u(0),\V w(0))_B 
\end{equation}
$t\in [0,T)$.
\end{lemma}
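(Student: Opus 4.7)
The starting observation is that both mollifications $\V u_h$ and $\V w_h$ belong to $C^\infty([0,T]; L^2_R(\mathcal V))$, so the classical product rule and the fundamental theorem of calculus yield the exact identity
\[
\int_0^t \bigl[(\partial_\tau \V u_h, \V w_h)_B + (\V u_h, \partial_\tau \V w_h)_B\bigr] \, d\tau = (\V u_h(t), \V w_h(t))_B - (\V u_h(0), \V w_h(0))_B
\]
for every $h$ small enough and every $t \in [0, T)$. My plan is then to show that each factor of the integrand on the left can be replaced, up to an error vanishing as $h \to 0$, by the un-mollified quantity appearing in \eqref{eq:d_mollification}, and that the boundary terms on the right converge to the stated limit.

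For the boundary terms, I would write
\[
(\V u_h(t), \V w_h(t))_B - (\V u(t), \V w(t))_B = (\V u_h(t) - \V u(t), \V w_h(t))_B + (\V u(t), \V w_h(t) - \V w(t))_B
\]
and combine the first part of Lemma \ref{lem:mollification} applied to each of $\V u$ and $\V w$ with the uniform boundedness of the family $\{\V w_h(t)\}_h$ in $L^2_R(\mathcal V)$, which follows from the weak continuity of $\V w$ and the uniform boundedness principle on compact subintervals of $[0, T)$. An identical argument handles the endpoint $\tau = 0$.

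For the error term
\[
R(h) := \int_0^t \bigl[(\V u - \V u_h, \partial_\tau \V w_h)_B + (\partial_\tau \V u_h, \V w - \V w_h)_B\bigr] d\tau,
\]
the naive estimate using $\norm{\partial_\tau \V w_h}_{L^2(0,T; L^2_R)} = O(1/h)$ diverges, so a more delicate argument is required. My plan is to exploit the symmetric structure of the convolution: writing $\V u(\tau) - \V u_h(\tau) = \int j_h(\tau-\sigma)[\V u(\tau) - \V u(\sigma)] d\sigma$, invoking the oddness of $j_h'$, and applying Fubini's theorem, one transfers the $\tau$-derivative across the inner product. This produces a representation of $R(h)$ in which only $L^2$-norms of $\V u - \V u_h$ and $\V w - \V w_h$ appear without any factor of $1/h$, and the convergence then follows from the second part of Lemma \ref{lem:mollification}.

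The principal obstacle is precisely the estimate of $R(h)$: because of the mismatch between the singular behaviour of $\partial_\tau \V w_h$ as $h \to 0$ and the merely weak convergence $\V u_h \rightharpoonup \V u$, any argument that tries to bound the two factors of $R(h)$ separately is doomed. One must genuinely exploit the symmetry of the convolution with an even mollifier, so that the derivative lands on the smooth factor and the remaining estimate involves only quantities that are strongly convergent in $L^2(0, T; L^2_R(\mathcal V))$.
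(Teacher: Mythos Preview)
The paper does not supply its own proof of this lemma; it is stated as standard with a pointer to \cite[Chapter~3]{Ma}, so there is nothing to compare against directly. That said, your sketch contains a genuine gap in the treatment of the boundary terms. You decompose
\[
(\V u_h(t), \V w_h(t))_B - (\V u(t), \V w(t))_B = (\V u_h(t) - \V u(t), \V w_h(t))_B + (\V u(t), \V w_h(t) - \V w(t))_B
\]
and argue that the first summand vanishes because $\V u_h(t)-\V u(t)\rightharpoonup 0$ weakly while $\{\V w_h(t)\}_h$ stays bounded. That implication is false in general: weak convergence of one factor against a merely bounded (even weakly convergent) moving second factor does not force the pairing to zero --- think of $\V a_h=\V b_h=e_{n(h)}$ in $\ell^2$. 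Lemma~\ref{lem:mollification} gives only weak pointwise convergence of $\V u_h,\V w_h$, so the cross term $(\V u_h(t)-\V u(t),\V w_h(t)-\V w(t))_B$ is not controlled by your argument, and no further splitting repairs this.

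Your handling of $R(h)$ is also left at the level of an intention: you assert that Fubini and the oddness of $j_h'$ produce a representation featuring only $\norm{\V u-\V u_h}_{L^2}$ and $\norm{\V w-\V w_h}_{L^2}$, but no identity is written down, and it is not clear how the claimed transfer avoids the very $1/h$ blow-up you flag. The classical route (Serrin; reproduced in the reference the paper cites) sidesteps both difficulties at once: one works directly with the double integral
\[
\int_0^t\!\!\int_0^T j_h'(\tau-s)\bigl[(\V u(\tau),\V w(s))_B+(\V u(s),\V w(\tau))_B\bigr]\,ds\,d\tau,
\]
kills the bulk over $[0,t]^2$ by the antisymmetry of the integrand under $(\tau,s)\mapsto(s,\tau)$, and then evaluates the remaining thin boundary layer in $s$ near the endpoints. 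In that argument the pointwise product $(\V u_h(t),\V w_h(t))_B$ never appears, which is precisely what lets one get by with the weak-continuity hypothesis alone.
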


\begin{lemma}\label{lem:approximation}
$\mathcal D_R(\mathcal V_T)$ is dense in $L^2(0,T;\mathcal H^1_2(\mathcal V))$. In particular, every $\V w\in L^2(0,T;\mathcal H^1_2(\mathcal V))$ can be approximated in $L^2(0,T;\mathcal H^1_2(\mathcal V))$ by the family $\{\V w_{n,h}:\; n\in \N,\, 0<h<T\}$ of functions 
\[
\V w_{n,h}:=\sum^n_{k=1}(\V w_h,\V \Psi_k)_1\V \Psi_k,
\]
where $\{\V \Psi_k\}_{k\in \N}\subset \mathcal D_R(\mathcal V)$ is a basis of $\mathcal H_1(\mathcal V)$. Moreover, the following convergences hold: 
\begin{equation*}
\begin{aligned}
&\lim_{n\to \infty}\norm{\V w_{n,h}-\V w_h}_{1,2}=0\qquad &&\text{ for all }t\in [0,T]\,\text{ and }\,h<T,
\\
&\lim_{n\to \infty}\norm{\V w_{n,h}-\V w_h}_{L^2(0,T;\mathcal H^1_2(\mathcal V))}=0\qquad &&\text{ for all }h<T, 
\\
&\lim_{h\to 0}\left(\lim_{n\to \infty}\norm{\V w_{n,h}-\V w}_{L^2(0,T;\mathcal H^1_2(\mathcal V))}\right)=0. 
\end{aligned}
\end{equation*}
\end{lemma}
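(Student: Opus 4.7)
The plan is to combine three standard ingredients: a Schauder basis of $\mathcal H^1_2(\mathcal V)$ contained in $\mathcal D_R(\mathcal V)$, the time mollification introduced before Lemma \ref{lem:mollification}, and a cut-off in the time variable to force compact support in $[0,T)$.

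First I would construct $\{\V \Psi_k\}_{k\in\N}\subset \mathcal D_R(\mathcal V)$. Since $\mathcal H^1_2(\mathcal V)$ is by definition the closure of $\mathcal D_R(\mathcal V)$ under $\norm{\cdot}_{1,2}$ and $W^{1,2}(\mathcal V)$ is separable, there exists a countable subset of $\mathcal D_R(\mathcal V)$ that is dense in $\mathcal H^1_2(\mathcal V)$. Applying Gram--Schmidt in the inner product $(\cdot,\cdot)_1$ of \eqref{eq:inner_product_w_1} would produce the desired orthonormal basis, with all vectors remaining in $\mathcal D_R(\mathcal V)$: this set is a linear subspace, since smoothness, compact support in $\mathcal V$, the divergence-free condition, and rigidity in a neighborhood of $\mathcal B_2$ are all preserved under linear combinations (two neighborhoods of $\mathcal B_2$ always share a common one).

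Next I would establish the three convergences for the explicit family $\V w_{n,h}$. For fixed $h\in(0,T)$, the mollification lies in $C^\infty([0,T];\mathcal H^1_2(\mathcal V))$, so for every $t\in[0,T]$ the vector $\V w_{n,h}(t)$ is precisely the $n$-th Fourier partial sum of $\V w_h(t)$ in the basis $\{\V \Psi_k\}$. Parseval's identity would then yield
\[
\norm{\V w_{n,h}(t)-\V w_h(t)}_{1,2}^2=\sum_{k>n}|(\V w_h(t),\V \Psi_k)_1|^2\xrightarrow[n\to\infty]{}0
\]
together with the pointwise bound $\norm{\V w_{n,h}(t)-\V w_h(t)}_{1,2}\le\norm{\V w_h(t)}_{1,2}$. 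The first claim follows at once, and since $\V w_h\in L^2(0,T;\mathcal H^1_2(\mathcal V))$, Lebesgue dominated convergence gives the second. The third would come from a triangle inequality combined with Lemma \ref{lem:mollification}: pass first to the limit in $n$ using the second convergence, then in $h$ using $\V w_h\to \V w$ in $L^2(0,T;\mathcal H^1_2(\mathcal V))$.

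Finally, to upgrade this to density of $\mathcal D_R(\mathcal V_T)$, I would introduce a time cut-off, because the coefficients $(\V w_h(\cdot),\V \Psi_k)_1$ need not vanish near $t=T$ and therefore $\V w_{n,h}$ is not a priori in $\mathcal D_R(\mathcal V_T)$. Given $\V w$ and $\varepsilon>0$, I would pick $\chi_\varepsilon\in C^\infty_0([0,T))$ with $\chi_\varepsilon\equiv 1$ on $[\varepsilon,T-\varepsilon]$; dominated convergence in the time variable yields $\chi_\varepsilon\V w\to \V w$ in $L^2(0,T;\mathcal H^1_2(\mathcal V))$ as $\varepsilon\to 0$. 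Applying the previous construction to $\chi_\varepsilon\V w$, and choosing $h$ smaller than the distance from $\supp\chi_\varepsilon$ to $T$, the approximants $(\chi_\varepsilon\V w)_{n,h}(\V x,t)=\sum_{k=1}^n c_k^\varepsilon(t)\V\Psi_k(\V x)$ have coefficients $c_k^\varepsilon\in C^\infty_0([0,T))$ and spatial factors in $\mathcal D_R(\mathcal V)$, hence lie in $\mathcal D_R(\mathcal V_T)$. The only delicate point, though purely bookkeeping, is the order of the three limits $n\to\infty$, $h\to 0$, $\varepsilon\to 0$; everything else reduces to Parseval, dominated convergence, and the already-cited mollification lemma.
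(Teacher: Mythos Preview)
Your argument is correct and follows the standard route. The paper does not actually supply a proof of this lemma; it states that the proofs of Lemmas~\ref{lem:equivalent_weak}--\ref{lem:approximation} ``are standard, they are similar to the ones provided in \cite[Chapter 3]{Ma}.'' What you have written is precisely the expected standard argument: Gram--Schmidt in $(\cdot,\cdot)_1$ to produce an orthonormal basis inside $\mathcal D_R(\mathcal V)$, Parseval plus dominated convergence for the three displayed limits, and a time cut-off combined with small mollification parameter to force membership in $\mathcal D_R(\mathcal V_T)$. Your observation that the raw $\V w_{n,h}$ need not lie in $\mathcal D_R(\mathcal V_T)$ (because the Fourier coefficients of $\V w_h$ need not vanish near $t=T$) and that an additional cut-off in time is required for the density statement proper is a genuine point that the paper's formulation leaves implicit.
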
 

We are now in position to prove Theorem \ref{th:continuous_dependence}
\begin{proof}[Proof of Theorem \ref{th:continuous_dependence}]
Consider the extensions $\V{\tilde v}$ and $\V{\tilde v}^*$ of $\V v$ and $\V v^*$ (together with the corresponding initial conditions), defined in \eqref{eq:extension}, respectively. Set $\V \Omega=\V \omega_1-b(\V{\tilde v})$ and $\V \Omega^*=\V \omega_1^*-~b(\V{\tilde v}^*)$. 
Let $\{\V{\tilde v}_{n,h}:\,n\in \N,\, 0<h<T\}$ and $\{\V{\tilde v}^*_{n,h}:\,n\in \N,\, 0<h<T\}$ be the approximating families of $\V{\tilde v}$ and $\V{\tilde v}^*$ in $L^2(0,T;\mathcal H^1_2(\mathcal V))$ given by Lemma \ref{lem:approximation}, respectively. For every $n\in \N$ and $h\in (0,T)$, let us replace $\V{\tilde v}^*_{n,h}$ and $\V{\tilde v}_{n,h}$  in place of $\V \phi$ in \eqref{eq:weak_v_d} with $s=0$, for $\V{\tilde v}$ and $\V{\tilde v}^*$, respectively. The following equations hold: 
\begin{equation*}\begin{split}
&-\int^t_0\left[(\V{\tilde v},\frac{\partial \V{\tilde v}^*_{n,h}}{\partial \tau})_B-b\left(\frac{\partial \V{\tilde v}^*_{n,h}}{\partial \tau}\right)\cdot\T I\cdot\V \Omega\right]\; \d \tau
+(\V{\tilde v}(t),\V{\tilde v}^*_{n,h}(t))_B
-(\V{\tilde v}_0,\V{\tilde v}^*_{n,h}(0))_B
\\
&\quad -b(\V{\tilde v}^*_{n,h}(t))\cdot\T I \cdot\V \Omega(t)+b(\V{\tilde v}^*_{n,h}(0))\cdot\T I \cdot\V \Omega_0
+2\mu\int^t_0\int_{\mathcal V}\T D(\V{\tilde v})\dotdot \T D(\V{\tilde v}^*_{n,h})\;\d V\d \tau
\\
&\quad+\int^t_0\int_{\mathcal V}\tilde \rho[\V{\tilde v}\cdot \nabla \V{\tilde v}
+2(\V \Omega+b(\V{\tilde v}))\times \V{\tilde v}]\cdot \V{\tilde v}^*_{n,h}\;\d V\d \tau
=0,
\end{split}\end{equation*}
and 
\begin{equation*}\begin{split}
&-\int^t_0\left[(\V{\tilde v}^*,\frac{\partial \V{\tilde v}_{n,h}}{\partial \tau})_B-b\left(\frac{\partial \V{\tilde v}_{n,h}}{\partial \tau}\right)\cdot\T I\cdot\V \Omega^*\right]\; \d \tau
+(\V{\tilde v}^*(t), \V{\tilde v}_{n,h}(t))_B
-(\V{\tilde v}^*_0, \V{\tilde v}_{n,h}(0))_B
\\
&\quad -b(\V{\tilde v}_{n,h}(t))\cdot\T I \cdot\V \Omega^*(t)+b(\V{\tilde v}_{n,h}(0))\cdot\T I \cdot\V \Omega^*_0
+2\mu\int^t_0\int_{\mathcal V}\T D(\V{\tilde v}^*)\dotdot \T D(\V{\tilde v}_{n,h})\;\d V\d \tau
\\
&\quad+ \int^t_0\int_{\mathcal V}\tilde \rho[\V{\tilde v}^*\cdot \nabla \V{\tilde v}^*
+2(\V \Omega^*+b(\V{\tilde v}^*))\times \V{\tilde v}^*]\cdot \V{\tilde v}_{n,h}\;\d V\d \tau
=0. 
\end{split}\end{equation*}
Taking the limit as $n\to \infty$ in the preceding two equations, we find that 
\begin{equation}\label{eq:approx_1_h}
\begin{split}
&-\int^t_0\left[(\V{\tilde v},\frac{\partial \V{\tilde v}^*_{h}}{\partial \tau})_B-b\left(\frac{\partial \V{\tilde v}^*_{h}}{\partial \tau}\right)\cdot\T I\cdot\V \Omega\right]\; \d \tau
+(\V{\tilde v}(t),\V{\tilde v}^*_{h}(t))_B
-(\V{\tilde v}_0,\V{\tilde v}^*_{h}(0))_B
\\
&\quad -b(\V{\tilde v}^*_{h}(t))\cdot\T I \cdot\V \Omega(t)+b(\V{\tilde v}^*_{h}(0))\cdot\T I \cdot\V \Omega_0
+2\mu\int^t_0\int_{\mathcal V}\T D(\V{\tilde v})\dotdot \T D(\V{\tilde v}^*_{h})\;\d V\d \tau
\\
&\quad+\int^t_0\int_{\mathcal V}\tilde \rho[\V{\tilde v}\cdot \nabla \V{\tilde v}
+2(\V \Omega+b(\V{\tilde v}))\times \V{\tilde v}]\cdot \V{\tilde v}^*_{h}\;\d V\d \tau
=0,
\end{split}
\end{equation}
and 
\begin{equation}\label{eq:approx_2_h}
\begin{split}
&-\int^t_0\left[(\V{\tilde v}^*,\frac{\partial \V{\tilde v}_{h}}{\partial \tau})_B-b\left(\frac{\partial \V{\tilde v}_{h}}{\partial \tau}\right)\cdot\T I\cdot\V \Omega^*\right]\; \d \tau
+(\V{\tilde v}^*(t), \V{\tilde v}_{h}(t))_B-(\V{\tilde v}^*_0, \V{\tilde v}_{h}(0))_B
\\
&\quad -b(\V{\tilde v}_{h}(t))\cdot\T I \cdot\V \Omega^*(t)+b(\V{\tilde v}_{h}(0))\cdot\T I \cdot\V \Omega^*_0
+2\mu\int^t_0\int_{\mathcal V}\T D(\V{\tilde v}^*)\dotdot \T D(\V{\tilde v}_{h})\;\d V\d \tau
\\
&\quad+ \int^t_0\int_{\mathcal V}\tilde \rho[\V{\tilde v}^*\cdot \nabla \V{\tilde v}^*
+2(\V \Omega^*+b(\V{\tilde v}^*))\times \V{\tilde v}^*]\cdot \V{\tilde v}_{h}\;\d V\d \tau
=0. 
\end{split}
\end{equation}
In the previous limits, the convergence of the linear terms is standard thanks to Lemma \ref{lem:approximation}. For what concerns the nonlinear terms, the convergence follows from the following estimates, Lemma \ref{lem:approximation} and Lebesgue dominated convergence theorem. For every $\V u_1,\V u_2\in L^\infty(0,T;\mathcal H(\mathcal V))\cap L^2(0,T;\mathcal H_2^1(\mathcal V))$: 
\begin{align*}
\int^t_0\int_{\mathcal V}\tilde \rho (\V u_1\cdot \nabla \V u_1)\cdot [(\V u_2)_{n,h}-(\V u_2)_h]\;\d V\d \tau
&\le 
\int^t_0\norm{\V u_1}_{6}\norm{\nabla \V u_1}_{2}\norm{(\V u_2)_{n,h}-(\V u_2)_h}_{3}\;\d \tau
\\
&\le c_1\int^t_0\norm{\nabla \V{\tilde u_1}}^2_{2}\norm{(\V u_2)_{n,h}-(\V u_2)_h}_{1,2}\;\d \tau
\end{align*}
by H\"older inequality, \eqref{eq:sobolev_korn}, \eqref{eq:korn_q} and Sobolev embedding theorem. Moreover, for every $\V a\in L^\infty(0,T)$, by H\"older inequality and \eqref{eq:korn_q}
\begin{equation}\label{eq:nonlinear_estimate0}
\begin{split}
\int^t_0\int_{\mathcal V}2\tilde \rho(\V a+b(\V u_1)\times \V u_1)\cdot [(\V u_2)_{n,h}-(\V u_2)_h]\;\d V\d \tau
&\le 
\int^t_0\norm{\V a\times \V u_1}_2\norm{(\V u_2)_{n,h}-(\V u_2)_h}_{1,2}\; \d \tau
\\
&\le c_2\int^t_0\norm{(\V u_2)_{n,h}-(\V u_2)_h}_{1,2}^2,\;\d \tau,
\end{split}
\end{equation}
where $c_2$ is a positive constant depending on $\norm{\V u_1}_{L^2(0,T;\mathcal H^1_2(\mathcal V))}$ and 
$\max_{t\in [0,T]}|\V a(t)|$. 

From \eqref{eq:weak}$_2$ for $\V \Omega$ and $\V \Omega^*$, we find that 
\begin{multline*}
\int^t_0 b\left(\frac{\partial \V{\tilde v}^*_{h}}{\partial \tau}\right)\cdot\T I\cdot\V \Omega\; \d \tau
-b(\V{\tilde v}^*_{h}(t))\cdot\T I \cdot\V \Omega(t)+b(\V{\tilde v}^*_{h}(0))\cdot\T I \cdot\V \Omega_0
\\
=\int^t_0 b(\V{\tilde v}^*_{h})\cdot [(\V \Omega+b(\V{\tilde v}))\times \T I\cdot \V \Omega]\; \d \tau
\end{multline*}
and
\begin{multline*}
\int^t_0 b\left(\frac{\partial \V{\tilde v}_{h}}{\partial \tau}\right)\cdot\T I\cdot\V \Omega^*\; \d \tau
-b(\V{\tilde v}_{h}(t))\cdot\T I \cdot\V \Omega^*(t)+b(\V{\tilde v}_{h}(0))\cdot\T I \cdot\V \Omega^*_0
\\ =\int^t_0 b(\V{\tilde v}_{h})\cdot [(\V \Omega^*+b(\V{\tilde v}^*))\times \T I\cdot \V \Omega^*]\; \d \tau.
\end{multline*}

Hence, adding \eqref{eq:approx_1_h} and \eqref{eq:approx_2_h}, we find that 
\begin{equation}\label{eq:approx_3_h}
\begin{split}
&-\int^t_0\left[(\V{\tilde v},\frac{\partial \V{\tilde v}^*_{h}}{\partial \tau})_B+(\V{\tilde v}^*,\frac{\partial \V{\tilde v}_{h}}{\partial \tau})_B\right]\;\d \tau +(\V{\tilde v}(t),\V{\tilde v}^*_{h}(t))_B-(\V{\tilde v}_0,\V{\tilde v}^*_{h}(0))_B
\\
&\qquad +(\V{\tilde v}^*(t), \V{\tilde v}_{h}(t))_B-(\V{\tilde v}^*_0, \V{\tilde v}_{h}(0))_B
\\
&\qquad+\int^t_0 b(\V{\tilde v}^*_{h})\cdot [(\V \Omega+b(\V{\tilde v}))\times \T I\cdot \V \Omega]\; \d \tau
+\int^t_0 b(\V{\tilde v}_{h})\cdot [(\V \Omega^*+b(\V{\tilde v}^*))\times \T I\cdot \V \Omega^*]\; \d \tau
\\
&\quad\quad+2\mu\int^t_0\int_{\mathcal V}[\T D(\V{\tilde v})\dotdot \T D(\V{\tilde v}^*_{h})
+\T D(\V{\tilde v}^*)\dotdot \T D(\V{\tilde v}_{h})]\;\d V\d \tau
\\
&\quad\quad+\int^t_0\int_{\mathcal V}\tilde \rho[\V{\tilde v}\cdot \nabla \V{\tilde v}
+2(\V \Omega+b(\V{\tilde v}))\times \V{\tilde v}]\cdot \V{\tilde v}^*_{h}\;\d V\d \tau
\\
&\quad\quad+ \int^t_0\int_{\mathcal V}\tilde \rho[\V{\tilde v}^*\cdot \nabla \V{\tilde v}^*
+2(\V \Omega^*+b(\V{\tilde v}^*))\times \V{\tilde v}^*]\cdot \V{\tilde v}_{h}\;\d V\d \tau
=0. 
\end{split}
\end{equation}

Next, we take the limit as $h\to 0$ in \eqref{eq:approx_3_h}. Again, the convergence of the linear terms follows easily thanks to \eqref{eq:d_mollification} and Lemma \ref{lem:mollification}. For what concerns the nonlinear terms, we use \eqref{eq:nonlinear_estimate0} and the following classical inequality 
\begin{multline}\label{eq:nonlinear_estimate_rs}
\left|\int^T_0\int_{\mathcal V}\tilde \rho (\V u_1\cdot \nabla \V u_2)\cdot \V u_3\;\d V\d \tau\right|
\\
\le c \left(\int^T_0\norm{\nabla \V u_1}_2^2\;\d \tau\right)^{3/2q}\left(\int^T_0\norm{\nabla \V u_2}_2^2\;\d \tau\right)^{1/2}\left(\int^T_0\norm{\V u_3}^p_q\norm{\V u_1}^2_2\;\d \tau\right)^{1/p}
\end{multline}
which holds for every $\V u_1,\V u_2\in L^\infty(0,T;\mathcal H(\mathcal V))\cap L^2(0,T;\mathcal H^1_2(\mathcal V))$ and $\V u_3\in L^p(0,T;L^q(\mathcal V))$ with $p$ and $q$ satisfying \eqref{eq:serrin} (see \cite[Lemma 1]{Serrin}).  Moreover, from \eqref{eq:weak}$_2$, we find that 
\begin{multline*}
\V \Omega^*(t)\cdot \T I \cdot \V \Omega(t)- \V \Omega^*_0\cdot \T I \cdot \V \Omega_0
\\
=-\int^t_0\V \Omega^*\cdot [(\V \Omega+b(\V{\tilde v}))\times \T I\cdot \V \Omega]\; \d \tau
-\int^t_0\V \Omega\cdot [(\V \Omega^*+b(\V{\tilde v}^*))\times \T I\cdot \V \Omega^*]\; \d \tau.
\end{multline*}
Hence, the couples $(\V{\tilde v},\V \Omega)$ and $(\V{\tilde v}^*,\V \Omega^*)$ satisfy the following equality 
 \begin{equation}\label{eq:2vv}
\begin{split}
(\V{\tilde v}(t),\V{\tilde v}^*(t))_B&-(\V{\tilde v}_0,\V{\tilde v}^*_0)_B
+\int^t_0 [\V \Omega^*+b(\V{\tilde v}^*_{h})]\cdot [(\V \Omega+b(\V{\tilde v}))\times \T I\cdot (\V \Omega-\V \Omega^*)]\; \d \tau
\\
&+\V \Omega^*(t)\cdot \T I \cdot \V \Omega(t)- \V \Omega^*_0\cdot \T I \cdot \V \Omega_0+4\mu\int^t_0\int_{\mathcal V}\T D(\V{\tilde v})\dotdot \T D(\V{\tilde v}^*)\;\d V\d \tau
\\
&+\int^t_0\int_{\mathcal V}\tilde \rho[(\V{\tilde v}-\V{\tilde v}^*)\cdot \nabla \V{\tilde v}
+2(\V \Omega-\V \Omega^*+b(\V{\tilde v}-\V{\tilde v}^*))\times \V{\tilde v}]\cdot \V{\tilde v}^*\;\d V\d \tau
=0. 
\end{split}
\end{equation}

We recall that, by Definition \ref{def:weak}, $(\V{\tilde v},\V \Omega)$ and $(\V{\tilde v}^*,\V \Omega^*)$ satisfy the strong energy inequality \eqref{eq:strong_energy} for all $t\in[0,T]$: 
\begin{equation}\label{eq:strong_energy_v}
\mathcal E(\V{\tilde v}(t))+\V \Omega(t)\cdot \T I\cdot \V \Omega(t)+4\mu\int^t_s\norm{\T D( \V{\tilde v}(\tau))}^2_{L^2(\Ell)}\;\d \tau\le\mathcal E(\V{\tilde v}_0)+\V \Omega_0\cdot \T I\cdot \V \Omega_0,
\end{equation}
and 
\begin{equation}\label{eq:strong_energy_vstar}
\mathcal E(\V{\tilde v}^*(t))+\V \Omega^*(t)\cdot \T I\cdot \V \Omega^*(t)+4\mu\int^t_s\norm{\T D( \V{\tilde v}^*(\tau))}^2_{L^2(\Ell)}\;\d \tau\le\mathcal E(\V{\tilde v}_0)^*+\V \Omega^*_0\cdot \T I\cdot \V \Omega^*_0.
\end{equation}
Adding \eqref{eq:strong_energy_v} and \eqref{eq:strong_energy_vstar}, and subtracting twice of \eqref{eq:2vv}, we find that 
the fields $\V w:=\V{\tilde v}-\V {\tilde v}^*$ and $\V \xi:=\V \Omega-\V\Omega^*$ must satisfy the following inequality
\begin{equation}\label{eq:subtraction}
\begin{split}
\mathcal E(\V w(t))+\V \xi(t)\cdot \T I\cdot \V \xi(t)&+4\mu\int^t_s\norm{\T D( \V w(\tau))}^2_{L^2(\Ell)}\;\d \tau
\\
&\le
\mathcal E(\V w_0)+\V \xi_0\cdot \T I\cdot \V \xi_0-2\int^t_0 [\V \xi+b(\V w)]\cdot [(\V \Omega^*+b(\V{\tilde v}^*_{h}))\times \T I\cdot \V \xi]\; \d \tau
\\
&\quad+2\int^t_0\int_{\mathcal V}\tilde \rho[\V w\cdot \nabla \V w+2(\V \xi+b(\V w))\times \V w]\cdot \V{\tilde v}^*\;\d V\d \tau,
\end{split}
\end{equation}
where $\V w_0:=\V{\tilde v}_0-\V {\tilde v}^*_0$ and $\V \xi_0:=\V \Omega_0-\V\Omega^*_0$. 
By H\"older inequality, \eqref{eq:nonlinear_estimate_rs} and Young's inequality, we get the following estimates 
\begin{multline*}
\mathcal E(\V w(t))+\V \xi(t)\cdot \T I\cdot \V \xi(t)+2\mu\int^t_s\norm{\T D( \V w(\tau))}^2_{L^2(\Ell)}\;\d \tau
\le \mathcal E(\V w_0)+\V \xi_0\cdot \T I\cdot \V \xi_0
\\+ c_3\int^t_0[\norm{\V{\tilde v}^*(\tau)}^p_{L_q(\mathcal V)}+\norm{\V w(\tau)}_{L^2(\mathcal V)}+|\V \xi(\tau)|][\mathcal E(\V w(\tau))+\V \xi(\tau)\cdot \T I\cdot \V \xi(\tau)] \;\d \tau.
\end{multline*}
Recalling \eqref{eq:extension} and \eqref{eq:energy_d} and using Gr\"onwall's Lemma together with \eqref{eq:kokr}, properties {\em (a)} and {\em (b)} of Theorem \ref{th:continuous_dependence} immediately follow. 
\end{proof}

\section{Existence of strong solution}\label{sec:strong}

In this section, we will prove the local in time existence and continuous dependence upon initial data of strong solutions to \eqref{eq:Motion} for a considerably ``large'' class of initial conditions. The approach is the one of maximal $L^p-L^q$ regularity in time-weighted $L^p$-spaces (see Appendix \ref{sec:maximal_regularity} for a brief discussion on such approach).  

Let us introduce some notation. For the remaining part of the paper, the brackets $[\cdot,\cdot]_\theta$ denote the complex interpolation, whereas $(\cdot,\cdot)_{\alpha,\gamma}$ are used for the real interpolation. For $p\in (1,\infty)$, $1/p<\upmu\le 1$ and a Banach space $X$, the {\em time-weighted $L^p$-spaces} are defined as follows 
\begin{equation}\label{eq:time_weigthed_Lp}
\begin{aligned}
&\V u\in L^p_{\upmu}((0,T); X) && \Leftrightarrow \quad  t^{1-\upmu}\V u\in L^p((0,T);X),\\
&\V u\in H^1_{p,\upmu}((0,T);X) && \Leftrightarrow \quad \V u,\d \V u/\d t\in L^p_{\upmu}((0,T); X).
 \end{aligned}
\end{equation}
Consider the operator $(\T A_q,D(\T A_q))$ where 
\begin{equation}\label{eq:stokes_v}
\T A_q:=-\frac{\mu}{\tilde \rho}\;\mathcal P_q\Delta
\end{equation}
is the Stokes operator with domain $D(\T A_q):=\{\V{\tilde w}\in \mathcal H^2_q(\mathcal V)\cap H_q(\mathcal V):\; \V w=\V 0\text{ on }\mathcal C\}$, $\tilde \rho$ is given in  \eqref{eq:density}; we recall that $\mathcal P_q$ is the projection of $L^q_R(\mathcal V)$ onto $\mathcal H_q(\mathcal V)$. Moreover, for $p,\;q\in(1,\infty)$, we consider the spaces 
$X_0:=\mathcal H_q(\mathcal V)\times \R^3$, $X_1:=D(\T A_q)\times\R^3$, and  the  interpolation spaces 
\[
X_{\gamma,\upmu}:=(X_0,X_1)_{\upmu-1/p,p},\quad X_\alpha=[X_0,X_1]_{\alpha}\quad\text{for }\mu\in (1/p,1],\;\alpha\in (0,1). 
\]  
The previous spaces are endowed with the norms 
\[
\norm{\V u}_{X_0}:=\sqrt{\norm{\V{\tilde v}}_{L^q(\mathcal V)}^2+|\V \omega_1|^2},\qquad\qquad 
\norm{\V u}_{X_1}:=\sqrt{\norm{\V{\tilde v}}_{W^{2,q}(\mathcal V)}^2+|\V \omega_1|^2}
\]
and similarly for the interpolation spaces. We recall the following characterization of Besov spaces $B^s_{qp}(\mathcal V)=(H^{s_0}_q(\mathcal V),H^{s_1}_q(\mathcal V))_{\theta,p}$ as real interpolation of Bessel potential spaces, and of Bessel potential spaces $H^s_q(\mathcal V)=[H^{s_0}_q(\mathcal V),H^{s_1}_q(\mathcal V)]_{\theta}$. These characterizations are valid for $s_0\ne s_1\in \R$, $p,q\in [1, \infty)$, $\theta\in (0,1)$ and $s=(1-\theta)s_0+\theta s_1$. We also recall that $B^s_{qq}(\mathcal V) = W^{s,q}(\mathcal V)$  and $B^s_{22}(\mathcal V) = W^{s,2}(\mathcal V) =H^s_2(\mathcal V)$. 

Before stating our main result about existence and related properties of strong solutions to \eqref{eq:Motion}, we need some preliminary observations. Let us consider the initial boundary value problem which describes the motion of a rigid body having a cavity $\mathcal V$ completely filled by a viscous liquid with a varying density $\tilde \rho$ defined in  \eqref{eq:density}.
\begin{equation}\label{eq:Motion_e}
\begin{aligned}
&\left.\begin{split}
&\frac{\partial \V{\tilde v}}{\partial t}+\V{\dot \omega}_1\times \V x+
\V{\tilde v}\cdot \nabla \V{\tilde v}+2\V \omega_1\times \V{\tilde v}
=\frac{\mu}{\tilde \rho}\Delta \V{\tilde v}-\frac{1}{\tilde \rho}\nabla \pi
\\ 
&\diver \V{\tilde v}=0
\end{split}\right\}&&\text{ on }\mathcal V\times (0,\infty),
\\
&\ \V{\dot \omega}_1-b\left(\frac{\partial \V{\tilde v}}{\partial t}\right)+\T I^{-1}\cdot\left[\V \omega_1\times \T I\cdot (\V \omega_1-b(\V{\tilde v}))\right]=\V 0 
&&\text{ in }(0,\infty),
\\
&\ \V{\tilde v}=\V 0&&\text{ on }\mathcal C,
\\
&\ \V{\tilde v}|_{t=0}=\V{\tilde v}_0,\qquad \V \omega_1(0)=\V \omega_{10}&& 
\end{aligned}
\end{equation}
Assume that for some initial data $(\V{\tilde v}_0, \V \omega_{10})$ satisfying the condition 
\begin{equation}\label{eq:compatibility}
\V{\tilde v}_0=\left\{\begin{aligned}
&\V v_0&&\text{ on }\Ell,
\\
&\V \omega_0\times \V x&&\text{ on }\mathcal B_2,
\end{aligned}\right.\qquad \qquad \text{with }\V v_0=\V \omega_0\times \V x\text{ on }\mathcal S,
\end{equation}
there exists $(\V{\tilde v},\V \omega_1)$ a strong solution to \eqref{eq:Motion_e} in the class $\mathbb{E}_{1,\upmu}(0,T)$ with $\upmu=1$, defined in \eqref{eq:regularity_c} below. Then there exist $\V v\in H^1_p(0,t_1;H_q(\Ell))\cap L^p(0,t_1;H^2_q(\Ell))$ and $\V \omega\in C^1((0,T];\R^3)$ such that $\V v=\V 0$ on $\mathcal C$, $\V v=\V \omega\times \V x$ on $\mathcal S$, and 
\[
\V{\tilde v}= \left\{\begin{aligned}
&\V v &&\text{on }\Ell,
\\
&\V \omega\times \V x &&\text{on }\mathcal B_2.
\end{aligned}\right.
\]
Using a duality argument (generalizing that in Remark \ref{re:strong_2}), one can find that the triple $(\V v,\V \omega_1,\V \omega)$ is a strong solution to \eqref{eq:Motion}. Moreover, $(\V v,\V \omega_1,\V \omega)$ satisfies the initial conditions thanks to \eqref{eq:compatibility}. Therefore, the goal of this section is to investigate the existence and related properties of strong solutions to \eqref{eq:Motion_e}. 

In the following we set 
\begin{equation*}
\mathcal B^{s}_{qp,\sigma}(\mathcal V):=
\left\{ 
\begin{aligned}
&\{\V u\in  B^{s}_{qp}(\mathcal V)\cap \mathcal H_q(\mathcal V): \V u=\V 0\; \text{on}\; \mathcal C\}, &&s>1/q,\\
&B^{s}_{qp}(\mathcal V)\cap \mathcal H_q(\mathcal V), && s\in [0,1/q).\\
\end{aligned}
\right.
\end{equation*}

In view of the previous observations, next theorem turns out to be the main result of this section. 

\begin{theorem}\label{th:strong}
Suppose
\begin{equation}
\label{assumptions-pq}
p\in(1,\infty),\quad q\in (1,3),\quad  2/p +3/q\le 3,
\end{equation}
and let (the time-weight) $\upmu$ satisfy
\begin{equation}
\label{assumptions-mu}
\upmu\in (1/p,1],\quad  \upmu\ge \upmu_{\rm crit}=\frac{1}{p} + \frac{3}{2q}-\frac{1}{2}. 
\end{equation}
\begin{enumerate}
\setlength\itemsep{1mm}
\item[{\bf (a)}]
Let  $\V u_0=(\V{\tilde v}_0,\V \omega_{10})\in \mathcal B^{2\upmu-2/p}_{qp,\sigma}(\mathcal V)\times \R^3=X_{\gamma,\upmu}$ be given such that \eqref{eq:compatibility} is satisfied.
Then there are positive constants $T=T(\V u_0)$ and $\eta=\eta(\V u_0)$ such that
\eqref{eq:Motion_e} admits a unique solution $\V u(\cdot, \V u_0)=(\V{\tilde v},\V \omega_1)$ in 
 \begin{equation}\label{eq:regularity_c}
 \mathbb E_{1,\upmu}(0,T)=H^1_{p,\upmu}((0,T); X_0)
 \cap L^p_{\upmu}((0,T); X_1).
 \end{equation}
\item[{\bf(b)}]
Suppose $p_j, q_j$, $\upmu_j$ satisfy \eqref{assumptions-pq}-\eqref{assumptions-mu}  and, in addition, $p_1\leq p_2$, $q_1\leq q_2$ as well as
\begin{equation}\label{mu-j}
 \upmu_1- \frac{1}{p_1}- \frac{3}{2q_1} \ge  \upmu_2- \frac{1}{p_2}- \frac{3}{2q_2}.
 \end{equation}
Then for each initial value $(\V{\tilde v}_0,\V \omega_{10})\in \mathcal B^{2\upmu_1 -2/p_1}_{q_1 p_1,\sigma}(\mathcal V)\times \R^3$ satisfying \eqref{eq:compatibility}, problem  \eqref{eq:Motion_e} admits a unique solution $(\V{\tilde v},\V \omega_1)$ in the class
\begin{equation*}
\begin{split}
&H^1_{p_1,\upmu_1}((0,T); \mathcal H_{q_1}(\mathcal V)\times\R^3)\cap L_{\upmu_1}^{p_1}((0,T); D(\T A_{q_1})\times\R^3) \\
&\cap H^1_{p_2,\upmu_2}((0,T); \mathcal H_{q_2}(\mathcal V)\times\R^3)\cap L_{\upmu_2}^{p_2}((0,T);D(\T A_{q_2})\times\R^3).
\end{split}
\end{equation*}
\item[{\bf (c)}]
Each solution exists on a maximal interval  $[0,t_+)=[0,t_+(\V u_0))$, and enjoys the additional regularity property
\begin{equation*}
\V{\tilde v} \in C([0,t_+); \mathcal B^{2\upmu-2/p}_{qp,\sigma}(\mathcal V))\cap C((0,t_+);\mathcal B^{2-2/p}_{qp,\sigma}(\mathcal V)),
\quad \V \omega_1\in C^1([0,t_+),\R^3).
\end{equation*}
\item[{\bf (d)}]
The solution $\V u=(\V{\tilde v},\V \omega_1)$ exists globally if
 $\V u([0,t_+))\subset B^{2\upmu-2/p}_{qp}(\mathcal V)\times \R^3$ is relatively compact. 
\end{enumerate}
\end{theorem}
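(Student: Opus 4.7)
The plan is to cast \eqref{eq:Motion_e} as a quasilinear abstract Cauchy problem in the product space $X_0=\mathcal H_q(\mathcal V)\times\R^3$, with unknown $\V u=(\V{\tilde v},\V\omega_1)$, and to invoke the abstract theory of maximal $L^p$-$L^q$-regularity in time-weighted $L^p$-spaces (in the spirit of Pr\"uss--Simonett). The intertwining of the time derivatives in \eqref{eq:Motion_e}$_1$ (through $\dot{\V\omega}_1\times\V x$) and in \eqref{eq:Motion_e}$_3$ (through $b(\partial_t\V{\tilde v})$) is a bounded linear coupling, which I would absorb by passing to the variable $\V\Omega=\V\omega_1-b(\V{\tilde v})$ already introduced in \eqref{eq:Motion}$_3$. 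The system then takes the form $\partial_t\V u+\mathbf A\V u=N(\V u)$ with principal linear operator $\mathbf A=\mathrm{diag}(\T A_q,\V 0)$ and nonlinearity $N$ collecting the convection $\V{\tilde v}\cdot\nabla\V{\tilde v}$, the Coriolis term $2\V\omega_1\times\V{\tilde v}$, and the quadratic coupling $\T I^{-1}\cdot[\V\omega_1\times\T I\cdot(\V\omega_1-b(\V{\tilde v}))]$.

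First I would establish that $\mathbf A$ has maximal $L^p_\upmu$-regularity on $X_0$. Since the $\R^3$-block is trivial, this reduces to maximal regularity of the Stokes operator $\T A_q$ on $\mathcal H_q(\mathcal V)$. The standard route is through $\mathcal R$-sectoriality (equivalently, a bounded $\mathcal H^\infty$-calculus), which is known for the classical Stokes operator with no-slip conditions on a bounded $C^2$-domain; the piecewise-constant density $\tilde\rho$ is a bounded positive multiplier and is absorbed harmlessly, while the rigid-body constraint $\V{\tilde v}=\V\omega\times\V x$ on $\mathcal B_2$ is already built into $\mathcal H_q(\mathcal V)$. The extension to the time-weighted setting then follows from the non-weighted one by the classical extrapolation result, and yields the trace identification $\mathbb E_{1,\upmu}(0,T)\hookrightarrow C([0,T];X_{\gamma,\upmu})$, matching the space of admissible initial data in the statement. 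The next step is to verify that $N$ is locally Lipschitz from $\mathbb E_{1,\upmu}(0,T)$ into $L^p_\upmu((0,T);X_0)$, with Lipschitz constant tending to zero as $T\downarrow 0$. The delicate ingredient is the convective estimate: using H\"older's inequality together with Sobolev embeddings $\mathcal B^{2\upmu-2/p}_{qp}(\mathcal V)\hookrightarrow L^{s_1}(\mathcal V)$ and $W^{1,q}(\mathcal V)\hookrightarrow L^{s_2}(\mathcal V)$ with $1/s_1+1/s_2=1/q$, one sees that the condition $\upmu\ge \upmu_{\rm crit}=1/p+3/(2q)-1/2$ in \eqref{assumptions-mu} is exactly what balances the scaling and closes the estimate, while $2/p+3/q\le 3$ in \eqref{assumptions-pq} provides the necessary slack on the admissible range of exponents; the remaining Coriolis-type and $\R^3$-quadratic terms are easier and controlled by the same embeddings. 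With this in hand, part~(a) follows from the standard contraction argument for $\V u\mapsto L^{-1}(\V u_0+N(\V u))$ on a small ball in $\mathbb E_{1,\upmu}(0,T)$, where $L=\partial_t+\mathbf A$ is the isomorphism provided by maximal regularity.

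Parts~(b)--(d) then follow from the general machinery of quasilinear parabolic problems with maximal regularity in weighted $L^p$-spaces. For (b), the parabolic smoothing inherent to such spaces places $\V u(t_0)$ into a better Besov class for any $t_0\in(0,t_+)$; restarting the problem at $t_0$ with the finer parameters $(p_2,q_2,\upmu_2)$ and concatenating via uniqueness yields the stated regularity on the full interval, provided the ordering \eqref{mu-j} is respected (this is precisely the monotonicity of the scaling invariant $\upmu-1/p-3/(2q)$). For (c), the maximal interval $[0,t_+)$ is constructed by the standard continuation argument, $\V{\tilde v}\in C([0,t_+);\mathcal B^{2\upmu-2/p}_{qp,\sigma}(\mathcal V))$ follows from the trace embedding above, and the improved continuity $\V{\tilde v}\in C((0,t_+);\mathcal B^{2-2/p}_{qp,\sigma}(\mathcal V))$ is obtained by applying (b) with $\upmu_2=1$. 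For (d), if the orbit $\V u([0,t_+))$ is relatively compact in $X_{\gamma,\upmu}$, then the existence time furnished by (a) is bounded below uniformly along any sequence $t_n\nearrow t_+$, so the solution can be continued past $t_+$, contradicting maximality unless $t_+=\infty$. The main technical obstacle I anticipate is the convective estimate at the critical scaling $\upmu_{\rm crit}$, which must simultaneously respect the mixed-density structure of $X_0$ and the rigid-body constraint on $\mathcal B_2$ encoded in $\mathcal H_q(\mathcal V)$; the verification of $\mathcal R$-sectoriality of $\T A_q$ in this constrained setting must likewise be performed with care, though it is essentially a bounded perturbation of the classical case.
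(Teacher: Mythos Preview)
Your overall strategy---semilinear reformulation plus maximal $L^p$-regularity in time-weighted spaces, with the convective estimate closing exactly at $\upmu_{\rm crit}$---matches the paper's, and your treatment of parts~(b)--(d) via smoothing/restart and the standard continuation argument is a legitimate alternative to the paper's fixed-point-on-intersection argument for~(b). However, there is a genuine gap in your handling of the time-derivative coupling. Passing to $\V\Omega=\V\omega_1-b(\V{\tilde v})$ decouples only the \emph{second} equation: substituting $\dot{\V\omega}_1=\dot{\V\Omega}+b(\partial_t\V{\tilde v})$ back into the fluid equation leaves the term $\mathcal P_q\big(b(\partial_t\V{\tilde v})\times\V x\big)$, so after projection the fluid equation reads $\T E_q\,\partial_t\V{\tilde v}+\T A_q\V{\tilde v}=(\text{nonlinear})$ with $\T E_q\V w=\V w+\mathcal P_q(b(\V w)\times\V x)$. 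Thus the principal linear operator is \emph{not} $\mathrm{diag}(\T A_q,\V 0)$ but $\mathrm{diag}(\T E_q^{-1}\T A_q,\V 0)$, and you must first show that $\T E_q$ is invertible and then that $\L_q:=\T E_q^{-1}\T A_q$ inherits maximal regularity.

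The paper does exactly this: it writes $\T E=\T 1+\T K$ with $\T K$ of finite rank, shows $\T E$ is Fredholm of index zero with trivial kernel (via Lemma~\ref{le:kokr}), and obtains $\L_q=(\T 1+\T C_q)\T A_q$ with $\T C_q$ compact. The transfer of maximal regularity is then done not through a direct $\mathcal R$-sectoriality argument but via a $\mathcal{BIP}$ perturbation result (Proposition~\ref{prop:perturbation}): since $\T A_q\in\mathcal{BIP}$ with angle $0$ (by the cited Abels result on $\mathcal H_q(\mathcal V)$), and $\T C_q\T A_q$ maps $D(\T A_q)$ boundedly into $D(\T A_q^{s/2})$ for some $s\in(0,1/q)$, one concludes $\L_q\in\mathcal{BIP}$ with power angle $<\pi/2$, which yields maximal regularity. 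Your sketch also understates the issue with $\T A_q$ itself: the rigid-body constraint $\V u|_{\mathcal B_2}=\V\omega_u\times\V x$ built into $\mathcal H_q(\mathcal V)$ is not a ``bounded perturbation of the classical case'' in any obvious operator-theoretic sense; the paper imports this from the literature rather than deriving it from the unconstrained Stokes operator. Finally, the paper packages the nonlinear estimate as a bilinear map $X_\beta\times X_\beta\to X_0$ with $\beta=\tfrac14(1+3/q)$, which feeds directly into the abstract Theorem~\ref{th:MRSP}; your direct H\"older/Sobolev route is equivalent in content, but you should check that it is compatible with the perturbed operator $\L_q$ rather than $\T A_q$ when identifying the interpolation spaces.
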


\begin{proof}
The statements in (a), (c) and (d) follow from Theorem \ref{th:MRSP}. We will verify the hypotheses of Theorem \ref{th:MRSP} in the next three steps. 

\paragraph{Step 1. A semilinear evolution equation}Problem \eqref{eq:Motion_e} can be reformulated as a semilinear evolution equation for the variable $\V u=[\V{\tilde v},\V \omega_1]^T$: 
\begin{equation}\label{eq:evolution0}
\T E\cdot \frac{\d \V u}{\d t}+\T A \V u=\T G(\V u,\V u),\quad\V u(0)=\V u_0,
\end{equation}
where, 
\begin{equation}\label{eq:E}
\begin{split}
&\T E:\;\left[\begin{matrix}\V w \\ \V \xi\end{matrix}\right]\in X_0\mapsto\T E(\V w,\V \xi):=\left[\begin{matrix}
\V w+\mathcal P_q\left(\V \xi\times \V x\right)
\\
\V\xi-b(\V w)\end{matrix}\right]\in X_0,
\\
&\T A:=\left[\begin{matrix}
\T A_q & \T 0
\\
\T 0 & \T 0
\end{matrix}\right]:\; X_1\to X_0,\quad\text{$\T A_q$ defined in \eqref{eq:stokes_v}, }
\\
&\T G(\V u,\V u):=\left[\begin{matrix}
\mathcal P_q(-\V{\tilde v}\cdot \nabla \V{\tilde v}-2\V \omega_1\times \V{\tilde v})
\\
-\T I^{-1}\cdot[\V \omega_1\times \T I\cdot(\V \omega_1-b(\V{\tilde v}))
\end{matrix}\right],
\end{split}\end{equation}
and the functional $b(\cdot)$ has been introduced in \eqref{eq:a_varphi}. 
The operator $\T E$ is linear, bounded, invertible, and has a bounded inverse. The linearity and boundedness of $\T E$ is obvious from its definition. For what concerns its invertibility, we observe that $\T E=\T 1+\T K$ with  
\[
\T K:=\left[\begin{matrix}
\T 0 & \mathcal P_q(\cdot \times \V x)
\\
-b(\cdot) & \T 0
\end{matrix}\right]
\]
a bounded operator with a finite dimensional range (see \eqref{eq:a_varphi}). A basis for the range of $\T K$ is given by $\{(\V e_i,\mathcal P_q(\V e_i\times \V x)):\; i=1,2,3\}$). Thus, $\T K$ is a compact operator, and $\T E$ is a Fredholm operator of index zero (by \cite[Theorem 5.26, page 238]{Kato}). The invertibility of $\T E$ then follows if we prove that its null space reduces to $\mathsf{N}[\T E]=\{\V 0\}$. The latter immediately follows from Lemma \ref{le:kokr} (actually, from its proof). In fact, $\T E$ is one-to-one when $q=2$. In addition to the previous properties of $\T E$, we can also infer that $\T E^{-1}\equiv\T 1+\T C$, where $\T C:=-\T K\cdot\T E^{-1}:\; X_0\to \mathcal R(\mathcal V)\cap \mathcal H_q(\mathcal V)\times\R^3$ is a bounded operator with a finite dimensional range, and then compact. 

\noindent Let us consider the linear operator $\L:=\T E^{-1}\cdot \T A$ with domain $X_1$, and observe that 
\begin{equation}\label{eq:op_L}
\L=(\T 1+\T C)\cdot \T A=\left[\begin{matrix}
\T A_q & 0
\\
0 & 0
\end{matrix}\right]+\T C\left[\begin{matrix}
\T A_q & 0
\\
0 & 0
\end{matrix}\right],  
\end{equation}
and let us denote $\T N(\V u,\V u):=\T E^{-1}\T G(\V u,\V u)$. Then, equation \eqref{eq:evolution0} (and thus \eqref{eq:Motion_e}) can be equivalently rewritten as 
\begin{equation}\label{eq:evolution}
\frac{\d \V u}{\d t}+\L \V u=\T N(\V u,\V u),\qquad \V u(0)=\V u_0.
\end{equation}

\paragraph{Step 2. Properties of the linear operator $\L$}\cite[Theorem 2]{Abels2009} implies that $\T A_q\in \mathcal{BIP}(X_0)$ with angle $\theta_{\T A_q}=0$ and $0\in\varrho(\T A_q)$. 
 
\noindent Consider the linear operator ${\L}_q:=\T E^{-1}_q\T A_q$ with domain $D({\L}_q)\equiv D(\T A_q)$, and for every $\V u\in \mathcal H_q(\mathcal V)$ 
\begin{equation}\label{eq:E_q}
\T E_q\V u:=\V u+\mathcal P_q\left(b(\V u)\times \V x\right)=\V u+\mathcal P_q\left(\V x\times \T I^{-1}\cdot\int_{\mathcal V}\tilde \rho \V x\times \V u\; \d V\right)
\in \mathcal H_q(\mathcal V). 
\end{equation}
With an argument similar to the one done in {\em Step 1}, it can be shown that 
\[
\L=\left[\begin{matrix} \L_q & \T 0
\\
\T 0 & \T 0\end{matrix}\right]
\]
and ${\L}_q=(\T 1+\T C_q)\T A_q$ with $\T C_q:\;\mathcal H_q(\mathcal V)\to  \mathcal R(\mathcal V)\cap \mathcal H_q(\mathcal V)$ a bounded operator with a finite dimensional range, and then compact. Since $\L_q$ is a compact perturbation of $\T A_q$, $\L_q$ has compact resolvent. In addition, its spectrum consists entirely of eigenvalues of finite algebraic multiplicity, and it is independent of $q$. From Lemma \ref{le:kokr}, it follows that $\L_q$ is positive definite on $\mathcal H_q(\mathcal V)$ when $q=2$. Thus, $\sigma(\L_q)\subset (0,\infty)$. In particular, $0\in \varrho(\L_q)$. 

\noindent The operator $\T B_q:= \T C_q\T A_q$ is bounded from $D(\T A_q)$ to $\mathcal R(\mathcal V)\cap \mathcal H_q(\mathcal V)$. In particular, there exists $s\in (0,1/q)$ such that 
\[
\T B_q:\; D(\T A_q) \to D(\T A^{s/2}_q)\quad\text{ is bounded,}
\]
where $D(\T A_q^{s/2})=[\mathcal H_q(\mathcal V), D(\T A_q)]_{s/2}$ (by \cite[Theorem 3.3.7]{PrSi}). 

\noindent Proposition \ref{prop:perturbation} and Remark \ref{rem:perturbation} imply that $\L_q\in \mathcal{BIP}(\mathcal H_q(\mathcal V)$, and then $\L\in \mathcal{BIP}(\mathcal H_q(\mathcal V)\times \R^3)$ with angle $\theta_{\L_q}<\pi/2$. 

\paragraph{Step 3. The nonlinear term} For $\beta\in (0,1)$, let $X_\beta:=[X_0,X_1]_\beta$.
Then we have
$X_\beta={\mathcal H^{2\beta}_{q}(\mathcal V)}\times \R^3$, where
$\mathcal H^{2\beta}_{q}(\mathcal V)$ is defined by
\begin{equation*}
\mathcal H^{2\beta}_{q}(\mathcal V):=
\left\{ 
\begin{aligned}
&\{\V u\in  H^{s}_{q}(\mathcal V)\cap\mathcal H^q(\mathcal V): \V u=\V 0\; \text{on}\; \partial\mathcal C\}, &&s>1/q,\\
&H^{s}_{q}(\mathcal V)\cap \mathcal H_q(\mathcal V), && s\in [0,1/q).\\
\end{aligned}
\right.
\end{equation*}
The fact that $\T N:=\T E^{-1}\T G:\; X_\beta\times X_\beta\to X_0$ is  bounded for  $\beta=\frac{1}{4}\big(1+\frac{3}{q}\big)$ with $q\in(1,3)$ follows from standard estimates (see e.g. \cite[Section 3]{PrWi} and \cite[proof of Theorem 3.4]{MaPrSi}). For such choice of $\beta$, \eqref{assumptions-pq} implies that $\upmu_{\rm crit}\le 1$. 
\newline

It remains to prove part (b). We note that, under the stated hypotheses,  
\[
{B}^{2\upmu_1 -2/p_1}_{q_1 p_1,\sigma}(\mathcal V)\times \R^3\hookrightarrow
{B}^{2\upmu_2 -2/p_2}_{q_2 p_2,\sigma}(\mathcal V)\times \R^3
\]
and for each fixed $j=1,2$, solutions $\V u_j\equiv(\tilde{\V v_j},\V \omega_{1,j})$ to \eqref{eq:evolution} in the class 
\[
\mathbb{E}_{1,\upmu_j}(0,T):=H^1_{p_j,\upmu_j}((0,T); \mathcal H_{q_j}(\mathcal V)\times\R^3)\cap L^{p_j}_{\upmu_j}((0,T); D(\T A_{q_j})\times\R^3)
\]
are fixed points of the strict contraction 
\[
{\sf T}:\; \mathbb{M}_j\to\mathbb{M}_j,\qquad {\sf T}\V u:=\e^{-t\T L}\V u_0+\e^{-t\T L}*\T N(\V u,\V u),
\]
where $\mathbb{M}_j$ is a closed subset of $\mathbb{E}_{1,\upmu_j}(0,T)$. Since also ${\sf T}:\; \mathbb{M}_1\cap\mathbb{M}_2\to\mathbb{M}_1\cap\mathbb{M}_2$ is a strict contraction, then it admits a unique fixed point which is the unique solution $(\V{\tilde v},\V \omega_1)$ in the class
\begin{equation*}
\begin{split}
&H^1_{p_1,\upmu_1}((0,T); \mathcal H_{q_1}(\mathcal V)\times\R^3)\cap L_{\upmu_1}^{p_1}((0,T); D(\T A_{q_1})\times\R^3) \\
&\cap H^1_{p_2,\upmu_2}((0,T); \mathcal H_{q_2}(\mathcal V)\times\R^3)\cap L_{\upmu_2}^{p_2}((0,T);D(\T A_{q_2})\times\R^3).
\end{split}
\end{equation*}
\end{proof}

\begin{remark}
\begin{itemize}
\item[(a)] In the case $p_1=q_1=2$, we  obtain $\mu_{\rm crit}=3/4$ and 
we find the largest space of initial data $X_{\rm crit}$,
\begin{equation}
\label{p=q=2}
X_{\rm crit}:=(\mathcal H_2(\mathcal V)\times\R^3, \mathcal H^2_2(\mathcal V)\times\R^3)_{1/4,2}
\subset \mathcal {H}^{1/2}_2(\mathcal V)\times \R^3,
\end{equation}
corresponding to which, there exists a unique solution to \eqref{eq:evolution} in the class
\begin{equation*}
\begin{split} 
&H^1_{2,3/4}((0,T); \mathcal H_2(\mathcal V)\times\R^3)\cap L^{2}_{3/4}((0,T); \mathcal H^2_2(\mathcal V)\times\R^3) \\
&\cap H^1_{p,\mu}((0,T); \mathcal H_q(\mathcal V)\times\R^3)\cap L^{p}_{\mu_2}((0,T); \mathcal H^2_q(\mathcal V)\times\R^3),
\end{split}
\end{equation*}
for any $p\ge 2, q\in [2,3)$, with $\mu=1/p + 3/2q -1/2.$
In particular, we can conclude that $v\in C((0,t_+); B^{2-2/p}_{qp}(\mathcal V))$ for any $p\ge 2, q\in [2,3)$.
\item[(b)] Theorem~\ref{th:strong}(b) asserts that problem \eqref{eq:evolution} admits for each initial value
$$(\V{\tilde v}_0,\V{\omega}_{10})\in \mathcal H^1_2(\mathcal V)\times\R^3$$
a unique solution in the class 
\begin{equation*}
\begin{split}
&W^{1,2}((0,T); \mathcal H_2(\mathcal V)\times\R^3)\cap L^2((0,T); \mathcal H^2_2(\mathcal V)\times\R^3) \\
&\cap H^1_{p,\mu}((0,T); \mathcal H_{q}(\mathcal V)\times\R^3)\cap L^{p}_{\mu}((0,T);\mathcal H^2_{q}(\mathcal V)\times\R^3),
\end{split}
\end{equation*}
for any $p\ge 2, q\in [2,3)$, with $\mu=1/p +3/2q-1/4$.
In particular, we can conclude that $v\in C((0,t_+); B^{2-2/p}_{qp}(\mathcal V))$ for any $p\ge 2, q\in [2,3)$.

\end{itemize}
\end{remark}


\appendix
\gdef\thesection{\Alph{section}} 
\makeatletter
\renewcommand\@seccntformat[1]{Appendix \csname the#1\endcsname.\hspace{0.5em}}
\makeatother
\section{Some useful integral equalities}

We recall some elementary integral equalities that have been widely used in the paper.  

Let $B_R$ the open ball in $\R^3$ with radius $R$, centered at the origin of a coordinate system $\{O;\V e_1,\V e_2,\V e_3\}$. The following equalities hold: 
\begin{enumerate}
\item \begin{equation}\label{eq:xtimesomegatimesx}
\int_{B_R}\V x\times (\V \omega\times \V x)=\frac{8\pi R^5}{15}\V \omega\qquad \text{for all }\V \omega \in \R^3. 
\end{equation}
\item \begin{equation}\label{eq:omegatimesxtimesomegatimesx}
\int_{B_R}(\V \omega \times \V x)\times (\V \xi \times \V x)=\frac{4\pi R^5}{15}\V \omega\times \V \xi\qquad \text{for all }\V \omega,\;\V \xi \in \R^3. 
\end{equation}
\end{enumerate}

\section{$L^p$-maximal regularity in time-weighted spaces}\label{sec:maximal_regularity}
In the following, we briefly present the main ideas and results concerning the abstract theory of $L^p$-maximal regularity in time-weighted spaces that has been used in Section \ref{sec:strong}. 

Consider the semilinear parabolic evolution equation on a Banach space $X_0$  
\begin{equation}\label{eq:abstract}
\frac{\d \V u}{\d t}+\L \V u=\T N(\V u,\V u),\quad t\in (0,T),\qquad \V u(0)=\V u_0\in X_0,
\end{equation}
where $T\in (0,\infty]$, $\L:\; X_1\to X_0$ is a linear bounded operator with $X_1$ dense in $X_0$ and $X_1\hookrightarrow X_0$, the operator $\T N:\;X_\beta\times X_\beta\to X$ is bounded and bilinear with $X_\beta:=[X_0,X_1]_\beta$, for some $\beta\in [0,1)$. 

Last two decades have seen a great mathematical effort to answer the following fundamental question regarding \eqref{eq:abstract}, and quasilinear parabolic evolution equations\footnote{These are evolution equations of the form 
$\displaystyle \frac{\d \V u}{\d t}+\L(\V u) \V u=\V F(\V u)$ for suitable (nonlinear) operators $\L$ and $\V F$, see \cite{PrSi}. In the following, we will focus on the particular case of semilinear parabolic equations, since our governing equations can be rewritten in such form. }, in general: {\em what is the {\em critical space}, i.e., the largest  set of initial data that would ensure well-posedness in the time-weighted $L^p$-space}
\begin{equation}\label{eq:time_weighted_class}
\mathbb E_{1,\upmu}(0,T):=H^1_{p,\upmu}((0,T); X_0)
 \cap L^p_{\upmu}((0,T); X_1)?
\end{equation}
The time-weighted spaces in the above equation have been defined in \eqref{eq:time_weigthed_Lp}. The choice of working in time-weighted spaces is twofold. In fact, estimates in time-weighted spaces not only help in lowering the regularity of initial data, they also exploit the parabolic regularization that is usually expected for solutions to  parabolic problems (c.f.  \cite[Introduction]{LePrWi}). More precisely, assume that, corresponding to $u_0\in X_{\gamma,\upmu}:=(X_0,X_1)_{\upmu-1/p,p}$ with $\upmu\in (1/p,1]$, there exists a unique solution to \eqref{eq:abstract} in $\mathbb E_{1,\upmu}(0,T)$. Note that $\mathbb E_{1,\upmu}(0,T)\hookrightarrow C([0,T];X_{\gamma,\upmu})$. However, since,
\[
\mathbb E_{1,\upmu}(\delta,T)\hookrightarrow W^{1,p}((\delta,T); X_0)\cap L^p((\delta,T); X_1)\hookrightarrow C([\delta,T];(X_0,X_1)_{1-1/p,p}), 
\]
for any small $\delta\in (0,T)$, this means that the solution regularizes instantly provided $\upmu<1$. 

Several works have been devoted to this subject and many applications have been presented, we refer the interested reader to \cite{PrSi04,KoPrWi,LePrWi,PrWi} and to the books \cite{amann,PrSi} for a more comprehensive treatment. In \cite{Pruss2018}, it has been also shown that such {\em critical spaces are scaling invariant}, provided the given equation admits a scaling. More recently, parabolic regularization in time-weighted spaces has turned out to be  useful for ascertaining the long-time behaviour of solutions to the equations governing the inertial motion of fluid-filled rigid bodies (see \cite{MaPrSi,MaPrSi19}). 

The existence and uniqueness of solutions to \eqref{eq:abstract} are obtained as fixed points of the map  
\[
{\sf T}:\; \mathbb{M}\to\mathbb{M},\qquad {\sf T}\V u:=\e^{-t\T L}\V u_0+\e^{-t\T L}*\T N(\V u, \V u),
\]
where $\mathbb{M}$ is a closed subset of $\mathbb{E}_{1,\upmu}(0,T)$ (see \cite[proof of Theorem 2.1]{Pruss2018}). Unfortunately, for the above map to be a strict contraction on $\mathbb{E}_{1,\upmu}(0,T)$, it is not enough that $-\L$ is the generator of an analytic semigroup. As a matter of fact, difficulties arise already at the linear level, when one seeks to prove well-posedness of the linear problem   
\begin{equation}\label{eq:abstract_linear}
\frac{\d \V u}{\d t}+\L \V u=\V f,\quad t\in (0,T),\qquad \V u(0)=\V 0\in X_0,
\end{equation}
in the class $W^{1,p}((0,T);X_0)\cap L^p((0,T);X_1)$ for any given $\V f\in L^p((0,T);X_0)$ (so that $\L$ has the so-called {\em property of maximal $L^p$-regularity}, see \cite[Section III.4]{amann}). More conditions are needed on the forcing $\V f$, on the Banach space $X_0$ and on the linear operator $\L$ (see the classical results in \cite{DoreVenni}). Among others, two fundamental requirements are:  
\begin{enumerate}
\item $X_0$ must satisfy the {\em unconditional martingale difference property}, or shortly $X_0$ is a {UMD-space}. This condition is equivalent to require that the Hilbert transform is continuous from $L^p(\R, X_0)$ into $L^p(\R, X_0)$ for $p\in (1,\infty)$. Examples of UMD-spaces are finite-dimensional Banach spaces; Hilbert spaces; Lebesgue spaces $L^p(X,\mu; E)$ for a $\sigma$-finite measure space $(X,\mu)$, a UMD-space $E$ and $p\in (1,\infty)$; Closed subspaces, quotients, duals and finite products of UMD-spaces; Complex interpolation spaces and real interpolation spaces of UMD-spaces (see \cite[Subsections III.4.4 \& III.4.5]{amann}).  
\item $\L\in \mathcal{BIP}(X_0)$, i.e., $\L$ is an operator with {\em bounded imaginary powers}. Here is the definition taken from \cite[Sections 3.2--3.4]{PrSi}: 
\begin{definition}
A sectorial operator $\T A$ is said to admit bounded imaginary powers if the operator $\T A^{is}:\; X_0\to X_0$, defined in the sense of the extended functional calculus for sectorial operators introduced in \cite[Sections 3.2]{PrSi}, is a bounded linear operator for each $s\in \R$, and there exists a constant $C>0$ such that $\norm{\T A^{is}}\le C$ for $|s| \le 1$. The class of such operators is denoted by $\mathcal{BIP}(X_0)$.
\end{definition}
The following representation holds 
\begin{equation}\label{eq:BIP_repr}
\T A^{is}=\frac{1}{2\pi i}\int_\Gamma \lambda^{is}\frac{\lambda}{(1+\lambda)^2}(\T 1+\T A)^2\T A^{-1}(\lambda\T 1-\T A)^{-1}\;\d \lambda,
\end{equation}
where the integration path is taken over $\Gamma=(-\infty, 0]\e^{i\psi}\cup[0,+\infty)\e^{-i\psi}$ with $\phi_{\T A}<\psi<\pi$,\footnote{The integral is independent of $\psi$. It can be replaced by any other curve 
encircling $\sigma(\T A)$ counterclockwise. } $\phi_{\T A}\in [0,\pi)$ sectoriality angle of $\T A$. 

For $\T A\in \mathcal {BIP}(X_0)$, $(\T A^{is})_{s\in \R}$ forms a strongly continuous group of bounded linear operators in $X_0$. The growth bound of this group 
\[
\theta_{\T A}:=\limsup_{|s|\to \infty}\frac{\log\norm{\T A^{is}}}{|s|}
\]
is called {\em power angle }of $\T A$. We refer to \cite[Subsection 3.3.4]{PrSi} (or to \cite[Subsection III.4.7 \& IIII.4.7]{amann}) for other related properties. 
\end{enumerate}
We are now ready to state the main result concerning the local well-posedness of the semilinear evolution equation \eqref{eq:abstract}. 

\begin{theorem}\label{th:MRSP}{\em (\cite[Theorem 2.1 \& Corollary 2.3]{Pruss2018})} 
Let $X_0$ be a UMD-space, $X_1$ be dense in $X_0$ and $X_1\hookrightarrow X_0$. Assume that $\L:\; X_1\to X_0$ is a bounded linear operator and $\L\in \mathcal{BIP}(X_0)$ with power angle $\theta_{\L}<\pi/2$, and $\T N:\;X_\beta\times X_\beta\to X$ is bounded and bilinear with $X_\beta=[X_0,X_1]_\beta\equiv D(\L^\beta)$ for some $\beta\in [0,1)$. 

If $p\in (1,\infty)$, $\upmu\in (1/p,1]$, $\beta\in (\upmu-1/p,1)$ and $2\beta-1\le \upmu-1/p$, then for each $\V u_0\in X_{\gamma,\upmu}$ there exists $T=T(\V u_0)>0$ and a unique solution $\V u$ to \eqref{eq:abstract} in the class 
\[
H^1_{p,\upmu}((0,T); X_0)\cap L^p_{\upmu}((0,T); X_1).
\]

In addition, the solution $\V u$ exists on a maximal time interval $[0,t_+(\V u_0))$, depends continuously on the data and enjoys the additional regularity 
\[
\V u\in W^{1,p}_{loc}((0,t_+);X_0)\cap L^p((0,t_+);X_1)\hookrightarrow C((0,t_+);X_{\gamma,1}). 
\]

Finally, if $\V u([0,t_+))\subset X_{\gamma,\upmu}$ is relatively compact, then the maximal existence time is $t_+=\infty$.
\end{theorem}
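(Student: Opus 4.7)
}

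\textbf{Overall strategy.} The plan is to rewrite \eqref{eq:abstract} in variation-of-parameters form
\[
({\sf T}\V u)(t)\;:=\;\e^{-t\L}\V u_0 \;+\; \int_{0}^{t}\e^{-(t-s)\L}\T N(\V u(s),\V u(s))\,\d s,
\]
and show that, for a sufficiently small $T=T(\V u_0)>0$, ${\sf T}$ is a strict contraction on a suitable closed ball in the time-weighted maximal-regularity class $\mathbb{E}_{1,\upmu}(0,T)=H^1_{p,\upmu}((0,T);X_0)\cap L^p_{\upmu}((0,T);X_1)$. The unique fixed point is the sought solution. Global aspects (continuation, continuous dependence, parabolic regularisation, compactness criterion) then follow by standard dynamical-systems arguments for semiflows generated by semilinear Cauchy problems in UMD-spaces.

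\textbf{Step 1: linear maximal regularity in time-weighted spaces.} Since $X_0$ is UMD and $\L\in\mathcal{BIP}(X_0)$ with $\theta_\L<\pi/2$, the Dore–Venni theorem gives $\L$ the property of maximal $L^p$-regularity on any bounded interval: the map
\[
(\V u_0,\V f)\mapsto \V u,\qquad \V u'+\L\V u=\V f,\ \ \V u(0)=\V u_0,
\]
is an isomorphism from $X_{\gamma,1}\times L^p((0,T);X_0)$ onto $\mathbb{E}_{1,1}(0,T)$, with trace space $X_{\gamma,1}=(X_0,X_1)_{1-1/p,p}$. A multiplication by the weight $t^{1-\upmu}$ together with the Hardy-type inequality in the half-line (the argument from Prüss–Simonett, Ch.~3) transfers this to the weighted setting: for every $\V u_0\in X_{\gamma,\upmu}=(X_0,X_1)_{\upmu-1/p,p}$ and $\V f\in L^p_{\upmu}((0,T);X_0)$ there is a unique solution in $\mathbb{E}_{1,\upmu}(0,T)$, and the corresponding solution operator is bounded independently of $T\in(0,T_0]$. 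I will record the two consequences used below: the trace embedding $\mathbb{E}_{1,\upmu}(0,T)\hookrightarrow C([0,T];X_{\gamma,\upmu})$ (so pointwise evaluation of initial data makes sense), and the mixed-derivative embedding
\[
\mathbb{E}_{1,\upmu}(0,T)\hookrightarrow H^{1-\theta}_{p,\upmu}((0,T);X_{\theta}),\qquad \theta\in[0,1],
\]
which will be the main tool for controlling the nonlinearity.

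\textbf{Step 2: bilinear estimate and contraction.} The hypothesis $\beta\in(\upmu-1/p,1)$ ensures $X_\beta\hookrightarrow X_{\gamma,\upmu}$ is not required, but rather that the temporal-trace exponent $1-\beta$ lies above $1/p-\upmu$, so that the embedding of Step~1 with $\theta=\beta$ combined with Sobolev embedding in time yields, for $u\in\mathbb{E}_{1,\upmu}(0,T)$,
\[
\||\cdot|^{1-\upmu}\,\|u(\cdot)\|_{X_\beta}\|_{L^{2p}(0,T)} \;\lesssim\; \|u\|_{\mathbb{E}_{1,\upmu}(0,T)},
\]
the admissibility of the exponent $2p$ here being exactly the condition $2\beta-1\le\upmu-1/p$ (the ``criticality'' inequality). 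Using boundedness of $\T N:X_\beta\times X_\beta\to X_0$ and Hölder in time,
\[
\|\T N(u,v)\|_{L^p_{\upmu}((0,T);X_0)}\;\lesssim\; \||\cdot|^{1-\upmu}\|u\|_{X_\beta}\|v\|_{X_\beta}\|_{L^p(0,T)}\;\lesssim\; \|u\|_{\mathbb{E}_{1,\upmu}}\|v\|_{\mathbb{E}_{1,\upmu}}.
\]
Combined with Step~1, ${\sf T}$ maps $\mathbb{E}_{1,\upmu}(0,T)$ into itself. On the closed ball $\mathbb{M}:=\{\V u\in\mathbb{E}_{1,\upmu}(0,T):\V u(0)=\V u_0,\ \|\V u-\e^{-\cdot\L}\V u_0\|_{\mathbb{E}_{1,\upmu}}\le R\}$, the bilinear estimate plus the fact that the weighted norm of the integral part of ${\sf T}\V u$ tends to $0$ as $T\downarrow 0$ (by absolute continuity of the Bochner integral in $L^p_\upmu$) give, for $T$ small, a self-map and a contraction of ratio $<1/2$. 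Banach fixed-point yields the unique solution.

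\textbf{Step 3: maximal interval, regularisation and compactness criterion.} Uniqueness extends to arbitrary intervals by a standard gluing argument, producing a maximal existence time $t_+(\V u_0)\in(0,\infty]$. For $0<\delta<t<t_+$, the restriction of $\V u$ to $[\delta,t]$ lies in the unweighted class $W^{1,p}((\delta,t);X_0)\cap L^p((\delta,t);X_1)$, so that the trace embedding with $\upmu=1$ gives $\V u\in C((0,t_+);X_{\gamma,1})$; this is the instantaneous parabolic regularisation. Continuous dependence on $\V u_0\in X_{\gamma,\upmu}$ follows because the fixed-point construction depends continuously on the forcing input $\e^{-t\L}\V u_0$ and the construction radius $R$ can be chosen locally uniformly in the data. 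Finally, suppose $\V u([0,t_+))$ is relatively compact in $X_{\gamma,\upmu}$ and $t_+<\infty$. Then for every sequence $t_n\uparrow t_+$ a subsequence of $\V u(t_n)$ converges to some $\V u_\star\in X_{\gamma,\upmu}$, and by continuous dependence the local existence time starting from points in a neighbourhood of $\V u_\star$ is bounded below by a fixed $\tau>0$; restarting the Cauchy problem from $\V u(t_n)$ with $n$ large enough to have $t_n+\tau>t_+$ contradicts maximality, so $t_+=\infty$.

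\textbf{Main obstacle.} The crux is the bilinear estimate of Step~2 at the critical weight $\upmu=\upmu_{\rm crit}$, i.e.\ when $2\beta-1=\upmu-1/p$: there the mixed-derivative embedding exhausts all temporal regularity and Hölder's inequality leaves no room for a small factor $T^{\varepsilon}$ to drive the contraction constant to zero. One then must verify contractivity not from smallness of $T$, but from smallness of the ball radius $R$ together with absolute continuity of the nonlinear term in $L^p_\upmu((0,T);X_0)$, which is the subtle point that distinguishes time-weighted critical theory from the classical sub-critical setting.
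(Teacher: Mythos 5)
You should first note that the paper does not actually prove Theorem \ref{th:MRSP}: it is quoted from \cite[Theorem 2.1 \& Corollary 2.3]{Pruss2018}, and Appendix \ref{sec:maximal_regularity} only records the strategy, namely that solutions are obtained as fixed points of ${\sf T}\V u=\e^{-t\L}\V u_0+\e^{-t\L}*\T N(\V u,\V u)$ on a closed subset $\mathbb M$ of $\mathbb{E}_{1,\upmu}(0,T)$. Your proposal follows exactly that route (Dore--Venni maximal regularity transferred to the weighted scale, mixed-derivative embedding, bilinear estimate, contraction, standard continuation/regularization/compactness arguments), so in approach it coincides with the intended proof.

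There is, however, one step that does not close as written. In Step 2 you claim $\bigl\| |\cdot|^{1-\upmu}\,\|u(\cdot)\|_{X_\beta}\bigr\|_{L^{2p}(0,T)}\lesssim\|u\|_{\mathbb{E}_{1,\upmu}}$ and then apply H\"older. But H\"older with two $L^{2p}$ factors, each carrying the \emph{full} weight $t^{1-\upmu}$, controls $\bigl\| t^{2(1-\upmu)}\|u\|_{X_\beta}\|v\|_{X_\beta}\bigr\|_{L^{p}(0,T)}$, whereas membership of $\T N(u,v)$ in $L^p_{\upmu}((0,T);X_0)$ requires the weight $t^{1-\upmu}$; you are short a factor $t^{-(1-\upmu)}$, which is unbounded at $t=0$ whenever $\upmu<1$. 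The correct bookkeeping puts half the weight on each factor: one needs $t^{(1-\upmu)/2}\|u(t)\|_{X_\beta}\in L^{2p}(0,T)$, i.e.\ $u\in L^{2p}_{(1+\upmu)/2}((0,T);X_\beta)$, and it is precisely the embedding $\mathbb{E}_{1,\upmu}(0,T)\hookrightarrow H^{1-\beta}_{p,\upmu}((0,T);X_\beta)\hookrightarrow L^{2p}_{(1+\upmu)/2}((0,T);X_\beta)$ whose Sobolev-index condition reduces to $2\beta-1\le \upmu-1/p$; with your choice of weight the criticality inequality never enters the computation, which is a sign the exponents are off. Once this is corrected, the remainder of your argument --- in particular the observation that at $\upmu=\upmu_{\rm crit}$ the contraction constant must come from smallness of the ball radius together with the vanishing of the $L^{2p}_{(1+\upmu)/2}$-norm of $\e^{-t\L}\V u_0$ over a shrinking interval, rather than from a positive power of $T$ --- is the argument of \cite{Pruss2018}.
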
 

This abstract theory has been widely applied to the Navier-Stokes equations in bounded domains and with different boundary conditions (see e.g. \cite{PrWi,Pruss2018,PrWi18}). Given the structure of our evolution equation \eqref{eq:evolution0} (or equivalently \eqref{eq:evolution}), with the relevant operators defined in \eqref{eq:E} and \eqref{eq:op_L}, the main difficulty in applying Theorem \ref{th:MRSP} resides in proving that our linear operator (given in \eqref{eq:op_L}) satisfies $\L\in \mathcal{BIP}(X_0)$, provided we carefully choose the parameters $p$, $\beta$ and $\upmu$. Proving that an operator admits bounded imaginary powers is not an easy task, in general. For the problem at hand, we will use the following perturbation result. 

\begin{proposition}\label{prop:perturbation}
Let $\T A$ be a sectorial operator on a Banach space $X_0$ 
with sectoriality angle $\phi_{\T A}\in [0,\pi)$, $0\in \varrho(\T A)$, and $\T B:\; D(\T A)\to D(\T A^\alpha)$ be a linear operator, satisfying 
\begin{equation}\label{eq:boundB}
\norm{\T B \V u}_{\T D(\T A^\alpha)}\le c\norm{\V u}_{D(\T A)}\qquad\text{ all }\V u\in D(\T A),
\end{equation}
for some $\alpha\in (0,1]$.  Assume that $0\in \varrho(\T A + \T B)$ and $\T A+\T B$ is sectorial with sectoriality angle $\phi_{\T A+\T B}\in [0,\pi)$. 
If $\T A\in \mathcal{BIP}(X_0)$ with power angle $\theta_{\T A}$, 
then $\T A+\T B\in \mathcal{BIP}(X_0)$ with angle $\theta_{\T A+\T B}\le \max\{\theta_{\T A},\phi_{\T A+\T B}\}$. 
\end{proposition}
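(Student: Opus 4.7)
The plan is to establish the result by a functional-calculus perturbation argument. Representing $(\T A+\T B)^{is}$ via the Dunford formula \eqref{eq:BIP_repr} and using the second resolvent identity to split the integrand, I aim to reduce the bound on $\|(\T A+\T B)^{is}\|$ to the BIP property of $\T A$ plus a remainder integral whose control hinges on the smoothing property of $\T B$. Since $\T A+\T B$ is sectorial with $0\in\varrho(\T A+\T B)$ and angle $\phi_{\T A+\T B}<\pi$, formula \eqref{eq:BIP_repr} applied to $\T A+\T B$ is valid on any Hankel contour $\Gamma_\psi$ of opening $\psi\in(\phi_{\T A+\T B},\pi)$, and the decisive choice is $\psi>\max\{\theta_{\T A},\phi_{\T A+\T B}\}$, which simultaneously places $\Gamma_\psi$ in $\varrho(\T A)\cap\varrho(\T A+\T B)$ and yields $|\lambda^{is}|\le e^{\psi|s|}$ on the contour.

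The central algebraic step is the second resolvent identity
\[
R(\lambda,\T A+\T B)=R(\lambda,\T A)+R(\lambda,\T A)\,\T B\,R(\lambda,\T A+\T B),
\]
combined with the auxiliary smoothing identity $(\T A+\T B)^{-1}-\T A^{-1}=-\T A^{-1}\T B(\T A+\T B)^{-1}$, used to reduce the operator prefactor $(\T 1+\T A+\T B)^2(\T A+\T B)^{-1}$ to $(\T 1+\T A)^2\T A^{-1}$ modulo corrections of the same $\T B$-smoothing type. Substituting and rearranging yields a decomposition $(\T A+\T B)^{is}=\T A^{is}+R(s)$, in which the first term reproduces the Dunford representation of $\T A^{is}$ along $\Gamma_\psi$ (admissible since $\psi>\theta_{\T A}\ge\phi_{\T A}$), and the remainder $R(s)$ is an integral whose integrand carries at least one factor of $\T B$ sandwiched between resolvents of $\T A$ and $\T A+\T B$.

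The crucial estimate for $R(s)$ uses the smoothing hypothesis $\T B:D(\T A)\to D(\T A^\alpha)$. From $(\T A+\T B)R(\lambda,\T A+\T B)=\lambda R(\lambda,\T A+\T B)-\T 1$ one obtains uniform $D(\T A)$-boundedness of $R(\lambda,\T A+\T B)$ along $\Gamma_\psi$, whence $\T A^\alpha\T B R(\lambda,\T A+\T B)$ is a uniformly bounded family of operators on $X_0$. Writing
\[
R(\lambda,\T A)\,\T B\,R(\lambda,\T A+\T B)=\T A^{-\alpha}R(\lambda,\T A)\bigl[\T A^\alpha\T B R(\lambda,\T A+\T B)\bigr]
\]
and combining with the moment inequality $\|\T A^\beta R(\lambda,\T A)\|\le C|\lambda|^{\beta-1}$ for $\beta\in[0,1]$ together with the decay of the rational regularizer $\lambda/(1+\lambda)^2$, the remainder integrand is controlled so that $\|R(s)\|\le Ce^{\psi|s|}$. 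Combined with the hypothesis $\|\T A^{is}\|\le Ce^{\theta_{\T A}|s|}$ from $\T A\in\mathcal{BIP}(X_0)$, summing the two bounds and letting $\psi\downarrow\max\{\theta_{\T A},\phi_{\T A+\T B}\}$ yields the claimed estimate $\theta_{\T A+\T B}\le\max\{\theta_{\T A},\phi_{\T A+\T B}\}$.

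The main technical obstacle is the management of the operator prefactor $(\T 1+\T A+\T B)^2(\T A+\T B)^{-1}$ throughout the splitting. It does not commute with $R(\lambda,\T A)$, so after the resolvent identity is applied the integrand is no longer in the canonical form of \eqref{eq:BIP_repr} for $\T A$. The algebraic rearrangements identifying the main piece with $\T A^{is}$ (through the smoothing identity for $(\T A+\T B)^{-1}-\T A^{-1}$) should be performed on the dense subspace $D(\T A)\cap D((\T A+\T B)^2)$, where all domains are compatible and the manipulations are classical, and then extended to $X_0$ by closure using the operator-norm bounds established on the remainder. Care is also required because the Dunford integral \eqref{eq:BIP_repr} converges only in a regularized/oscillatory sense rather than absolutely, so the estimates must be carried out after the explicit algebraic cancellations reduce each surviving piece to a form where the contour estimates above are meaningful.
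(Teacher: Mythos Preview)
Your strategy is the same as the paper's: use the second resolvent identity to write $(\T A+\T B)^{is}$ as $\T A^{is}$ plus a remainder integral carrying a factor of $\T B$, and then control that remainder through the smoothing hypothesis \eqref{eq:boundB} together with the moment-type bound $\|\T A(\lambda-\T A)^{-1}\V w\|_{X_0}\le c|\lambda|^{-\alpha}\|\V w\|_{D(\T A^\alpha)}$. So the overall architecture is correct.

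The one point where the paper is cleaner concerns exactly what you flag as ``the main technical obstacle''. Rather than working with the full regularized representation \eqref{eq:BIP_repr} and its prefactor $(\T 1+\T A+\T B)^2(\T A+\T B)^{-1}$, the paper first uses $0\in\varrho(\T A)$ and Cauchy's theorem to replace \eqref{eq:BIP_repr} by the simpler formula
\[
\T A^{is}=\frac{1}{2\pi i}\int_\Gamma \frac{\lambda^{is}}{1+\lambda}\,(\T 1 +\T A)(\lambda-\T A)^{-1}\,\d\lambda,
\]
and likewise for $\T A+\T B$. With only a first-order prefactor $(\T 1+\T A+\T B)$, the splitting via the resolvent identity no longer generates the noncommuting cross-terms you are worried about: one term is $\T A^{is}$, a second is simply $\T B(\T 1+\T A)^{-1}\T A^{is}$, and the remaining piece is exactly the integral you estimate with the $|\lambda|^{-\alpha}$ decay. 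This shortcut removes the need for your density and closure arguments on $D(\T A)\cap D((\T A+\T B)^2)$ and makes each surviving integral absolutely convergent, so the delicate ``regularized/oscillatory'' bookkeeping you mention is not needed.
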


In the above statement $D(\T A^\alpha)$ denotes the domain of the (real) fractional power $\T A^\alpha$ of the operator $\T A$.\footnote{Fractional powers are defined for any sectorial operator. } Such domain is endowed with the norm 
\[
\norm{\V u}_{D(\T A^\alpha)}=\norm{\T A^\alpha\V u}_{X_0}\quad \text{for all }\alpha\in [0,1],\ \V u\in D(\T A^\alpha). 
\]
In addition, $D(\T A)\hookrightarrow D(\T A^\alpha)\hookrightarrow D(\T A^0)\equiv X_0$ for $\alpha\in (0,1)$. 

The proof of the proposition is in line with the one of \cite[Theorem 2.3]{ABH01}. The assumption $0\in \varrho(\T A)$ can be dropped. The proof is then obtained by a modification of the proof of \cite[Proposition 3.3.9]{PrSi}. Here, we keep the assumption $0\in \varrho(\T A)$ for simplicity, and present a proof for completeness. 

\begin{proof}
The following representation  
\[
\T A^{is}=\frac{1}{2\pi i}\int_\Gamma \frac{\lambda^{is}}{1+\lambda}(\T 1 +\T A)(\lambda\T 1-\T A)^{-1}\; \d\lambda
\]
follows from \eqref{eq:BIP_repr} and Cauchy Integral Theorem, since $0\in \varrho(\T A)$ and  
\[\begin{split}
&\frac{\lambda}{(1+\lambda)^2}(\T 1+\T A)^2\T A^{-1}(\lambda\T 1-\T A)^{-1}
-\frac{1}{1+\lambda}(\T 1+\T A)(\lambda \T 1-\T A)^{-1}=\frac{1}{(1+\lambda)^2}(\T A^{-1}+\T 1). 
\end{split}\]

Let $\Gamma=(-\infty, 0]\e^{i\psi}\cup[0,+\infty)\e^{-i\psi}$ with $\phi_{\T A+\T B}<\psi<\pi$. We will show that 
\[
\frac{1}{2\pi i}\int_\Gamma \frac{\lambda^{is}}{1+\lambda}(\T 1 +\T A+\T B)(\lambda\T 1-\T A-\T B)^{-1}\; \d\lambda
\]
defines a (linear) bounded operator from $X_0$ to $X_0$, with a uniform bound for $s\in [-1,1]$. Since $\T A\in \mathcal{BIP}(X_0)$, then  
\[
\frac{1}{2\pi i}\int_\Gamma \frac{\lambda^{is}}{1+\lambda}(\T 1 +\T A)(\lambda\T 1-\T A)^{-1}\; \d\lambda=\T A^{is}
\]
and 
\[
\frac{1}{2\pi i}\int_\Gamma \frac{\lambda^{is}}{1+\lambda}\T B(\lambda\T 1-\T A)^{-1}\; \d\lambda=\T B(\T 1 +\T A)^{-1}\T A^{is}
\]
are bounded operators, and they are uniformly bounded for $s\in [-1,1]$. So, it remains to show that the following operator enjoys the same properties 
\begin{equation}\label{eq:ip1}\begin{split}
\frac{1}{2\pi i}\int_\Gamma \frac{\lambda^{is}}{1+\lambda}(\T 1 +\T A+\T B)(\lambda\T 1-\T A-\T B)^{-1}\; \d\lambda-\frac{1}{2\pi i}\int_\Gamma \frac{\lambda^{is}}{1+\lambda}(\T 1 +\T A+\T B)(\lambda\T 1-\T A)^{-1}\; \d\lambda.
\end{split}\end{equation}
Since 
\[
(\lambda\T 1-\T A-\T B)^{-1}-(\lambda\T 1-\T A)^{-1}=(\lambda\T 1-\T A)^{-1}\T B(\lambda\T 1-\T A-\T B)^{-1}, 
\]
it is then enough to show that 
\begin{equation}\label{eq:ip2}
\frac{1}{2\pi i}\int_\Gamma \frac{\lambda^{is}}{1+\lambda}(\T 1 +\T A+\T B)(\lambda\T 1-\T A)^{-1}\T B(\lambda\T 1-\T A-\T B)^{-1}\; \d\lambda
\end{equation}
is bounded. Note that 
\[
\frac{1}{2\pi i}\int_\Gamma \frac{\lambda^{is}}{1+\lambda}(\lambda\T 1-\T A)^{-1}\T B(\lambda\T 1-\T A-\T B)^{-1}\; \d\lambda
\]
is bounded since $\T A$ and $\T A+\T B$ are sectorial operators, $0\in \varrho(\T A)$, $\T B$ satisfies \eqref{eq:boundB}, and then 
\[\begin{split}
(|\lambda|+1)&\norm{(\lambda\T 1-\T A)^{-1}\T B(\lambda\T 1-\T A-\T B)^{-1}\V u}_{X_0}
\\
&\le \left[|\lambda|\norm{(\lambda\T 1-\T A)^{-1}}+\norm{\T A^{-1}}\norm{\T A(\lambda\T 1-\T A)^{-1}}\right]\norm{\T B(\lambda\T 1-\T A-\T B)^{-1}\V u}_{X_0}
\\
&\le k_1\norm{\T B(\lambda\T 1-\T A-\T B)^{-1}\V u}_{X_0}
\le  k_2\norm{(\lambda\T 1-\T A-\T B)^{-1}\V u}_{D(\T A)} \le  k_3\norm{\V u}_{X_0}
\end{split}\]
for all $\V u\in X_0$. Finally, the following estimates  
\[\begin{split}
&\norm{(\T A+\T B)(\lambda\T 1-\T A)^{-1}\T B(\lambda\T 1-\T A-\T B)^{-1}\V u}_{X_0}
\\
&\qquad \qquad \le k_4\norm{(\lambda\T 1-\T A)^{-1}\T B(\lambda\T 1-\T A-\T B)^{-1}\V u}_{D(\T A)}
\\
&\qquad \qquad\le \frac{k_5}{|\lambda|^\alpha}\norm{\T B(\lambda\T 1-\T A-\T B)^{-1}\V u}_{D(\T A^\alpha)}
\le \frac{k_6}{|\lambda|^\alpha}\norm{(\lambda\T 1-\T A-\T B)^{-1}\V u}_{D(\T A)}
\le \frac{k_7}{|\lambda|^\alpha}\norm{\V u}_{X_0}
\end{split}\]
hold for all $\V u\in X_0$ thanks to \eqref{eq:boundB}, the fact that $0\in \varrho(\T A)$, $\T A$ and $\T A+\T B$ are sectorial,  and the estimate (see e.g. \cite[Page 64]{lunardi})
\[
\norm{\T A(\lambda\T 1-\T A)^{-1}\V w}_{X_0}\le \frac{c}{|\lambda|^\alpha}\norm{\V w}_{D(\T A^\alpha)}\quad\text{for all }\V w\in D(\T A^\alpha).
\]
\end{proof}

\begin{remark}\label{rem:perturbation}
\begin{itemize}
\item[{\em (R1)}] With the same assumptions of the above proposition, if we further assume that  ${\sf Re}\;\sigma(\T A)\subset (0,\infty)$, \footnote{ ${\sf Re}\;\sigma(\T A):=\{{\sf Re}\; \lambda:\; \lambda \in \sigma(\T A)\}$. } then the hypothesis $\T A+\T B$ sectorial would be automatically satisfied by \cite[Propositions 2.4.1 \&  2.2.15]{lunardi}. 
\item[{\em (R2)}] Let $X$ and $Y$ be two Banach spaces, and assume that $\dim Y<\infty$. Consider the linear operator 
\[
\T L=\left[\begin{matrix} \T A & \T 0
\\
\T 0 & \T B\end{matrix}\right]:\; \V u=\left[\begin{matrix}\V v \\ \V w\end{matrix}\right]\in D(\T A)\times Y\to \T L\V u=\left[\begin{matrix}\T A\V v \\ \T B\V w\end{matrix}\right]\in X\times Y. 
\] 
The Banach space $X\times Y$ is endowed with the norm $\norm{\cdot}_{X\times Y}:=\norm{\cdot}_X+\norm{\cdot}_Y$. Note that any linear operator from a finite dimensional space to a finite dimensional space admits bounded imaginary powers (as a matter of fact, the path $\Gamma$ in \eqref{eq:BIP_repr} does not need to go to infinity as the spectrum consists of a finite number of eigenvalues, and then the integral representation \eqref{eq:BIP_repr} defines a uniformly bounded operator). Therefore, for $\T A\in \mathcal{BIP}(X)$, we have that $\T L\in\mathcal{BIP}(X\times Y)$.   
\end{itemize}
\end{remark}

\section*{Aknowlegments}
I would like to express my gratitude to Gieri Simonett for the illuminating discussions. 

\bibliographystyle{elsarticle-harv}
\bibliography{Mazzone_Liquid-filled_Gap_R2}   

\begin{thebibliography}{49}
\expandafter\ifx\csname natexlab\endcsname\relax\def\natexlab#1{#1}\fi
\providecommand{\url}[1]{\texttt{#1}}
\providecommand{\href}[2]{#2}
\providecommand{\path}[1]{#1}
\providecommand{\DOIprefix}{doi:}
\providecommand{\ArXivprefix}{arXiv:}
\providecommand{\URLprefix}{URL: }
\providecommand{\Pubmedprefix}{pmid:}
\providecommand{\doi}[1]{\href{http://dx.doi.org/#1}{\path{#1}}}
\providecommand{\Pubmed}[1]{\href{pmid:#1}{\path{#1}}}
\providecommand{\bibinfo}[2]{#2}
\ifx\xfnm\relax \def\xfnm[#1]{\unskip,\space#1}\fi
\bibitem[{{Abels, H.} and {Terasawa, Y.}(2009)}]{Abels2009}
\bibinfo{author}{{Abels, H.}}, \bibinfo{author}{{Terasawa, Y.}},
  \bibinfo{year}{2009}.
\newblock \bibinfo{title}{On {S}tokes operators with variable viscosity in
  bounded and unbounded domains}.
\newblock \bibinfo{journal}{Math. Ann.} \bibinfo{volume}{344},
  \bibinfo{pages}{381--429}.
\bibitem[{{Adams, R.~A.} and {Fournier, J.~J.~F.}(2003)}]{Adams}
\bibinfo{author}{{Adams, R.~A.}}, \bibinfo{author}{{Fournier, J.~J.~F.}},
  \bibinfo{year}{2003}.
\newblock \bibinfo{title}{Sobolev Spaces}.
\newblock \bibinfo{edition}{Second} ed., \bibinfo{publisher}{Academic Press}.
\bibitem[{Amann(1995)}]{amann}
\bibinfo{author}{Amann, H.}, \bibinfo{year}{1995}.
\newblock \bibinfo{title}{Linear and Quasilinear Parabolic Problems. Volume I:
  Abstract Linear Theory}.
\newblock \bibinfo{publisher}{Birkh\"auser Verlag}.
\bibitem[{{Arendt, W.} et~al.(2001a){Arendt, W.}, {Batty, C.~J.~K.}, {Hieber,
  M.} and {Neubrander, F.}}]{ArBaHiNe}
\bibinfo{author}{{Arendt, W.}}, \bibinfo{author}{{Batty, C.~J.~K.}},
  \bibinfo{author}{{Hieber, M.}}, \bibinfo{author}{{Neubrander, F.}},
  \bibinfo{year}{2001}a.
\newblock \bibinfo{title}{Vector-valued {L}aplace transforms and {C}auchy
  problems}. volume~\bibinfo{volume}{96} of \textit{\bibinfo{series}{Monographs
  in Mathematics}}.
\newblock \bibinfo{publisher}{Birkh\"{a}user Verlag, Basel}.
\bibitem[{{Arendt, W.} et~al.(2001b){Arendt, W.}, {Bu, S.} and {Haase,
  M.}}]{ABH01}
\bibinfo{author}{{Arendt, W.}}, \bibinfo{author}{{Bu, S.}},
  \bibinfo{author}{{Haase, M.}}, \bibinfo{year}{2001}b.
\newblock \bibinfo{title}{Functional calculus, variational methods and
  {L}iapunov's theorem}.
\newblock \bibinfo{journal}{Arch. Math. (Basel)} \bibinfo{volume}{77},
  \bibinfo{pages}{65--75}.
\bibitem[{{Busse, F.~H.}(1968)}]{Bu1}
\bibinfo{author}{{Busse, F.~H.}}, \bibinfo{year}{1968}.
\newblock \bibinfo{title}{Steady fluid flow in a precessing spheroidal shell}.
\newblock \bibinfo{journal}{J. Fluid Mech.} \bibinfo{volume}{33 Issue 04},
  \bibinfo{pages}{739--751}.
\bibitem[{{Busse, F.~H.}(1970)}]{Bu4}
\bibinfo{author}{{Busse, F.~H.}}, \bibinfo{year}{1970}.
\newblock \bibinfo{title}{{The Dynamical Coupling Between Inner Core and Mantle
  of the {E}arth and the 24-Year Libration of the Pole}}, in:
  \bibinfo{booktitle}{Earthquake Displacement Fields and the Rotation of the
  Earth: A NATO Advanced Study Institute Conference Organized by the Department
  of Geophysics, University of Western Ontario, London, Canada, 22 June--28
  June 1969}, \bibinfo{publisher}{Springer Netherlands}. pp.
  \bibinfo{pages}{88--98}.
\bibitem[{{Chemetov, N.~V.} and {Ne\v{c}asov\'{a},
  \v{S}.}(2017)}]{Chemetov2017}
\bibinfo{author}{{Chemetov, N.~V.}}, \bibinfo{author}{{Ne\v{c}asov\'{a},
  \v{S}.}}, \bibinfo{year}{2017}.
\newblock \bibinfo{title}{The motion of the rigid body in the viscous fluid
  including collisions. {G}lobal solvability result}.
\newblock \bibinfo{journal}{Nonlinear Anal. Real World Appl.}
  \bibinfo{volume}{34}, \bibinfo{pages}{416--445}.
\bibitem[{{Conca, C.} et~al.(2000){Conca, C.}, {San~Mart\`in, H.~J.} and
  {Tucsnak, M.}}]{CoMaTu}
\bibinfo{author}{{Conca, C.}}, \bibinfo{author}{{San~Mart\`in, H.~J.}},
  \bibinfo{author}{{Tucsnak, M.}}, \bibinfo{year}{2000}.
\newblock \bibinfo{title}{Existence of solutions for the equations modeling the
  motion of a rigid body in a viscous fluid}.
\newblock \bibinfo{journal}{Comm. Partial Differential Equations}
  \bibinfo{volume}{25}, \bibinfo{pages}{1019--1042}.
\bibitem[{{Desjardins, B.} and {Esteban, J.~M.}(1999)}]{DeEs}
\bibinfo{author}{{Desjardins, B.}}, \bibinfo{author}{{Esteban, J.~M.}},
  \bibinfo{year}{1999}.
\newblock \bibinfo{title}{Existence of weak solutions for the motion of rigid
  bodies in a viscous fluid}.
\newblock \bibinfo{journal}{Arch. Ration. Mech. Anal.} \bibinfo{volume}{146},
  \bibinfo{pages}{59--71}.
\bibitem[{{Disser, K.} et~al.(2016){Disser, K.}, {Galdi, G.~P.}, {Mazzone, G.}
  and {Zunino, P.}}]{DiGaMaZu}
\bibinfo{author}{{Disser, K.}}, \bibinfo{author}{{Galdi, G.~P.}},
  \bibinfo{author}{{Mazzone, G.}}, \bibinfo{author}{{Zunino, P.}},
  \bibinfo{year}{2016}.
\newblock \bibinfo{title}{Inertial motions of a rigid body with a cavity filled
  with a viscous liquid}.
\newblock \bibinfo{journal}{Arch. Ration. Mech. Anal.} \bibinfo{volume}{221},
  \bibinfo{pages}{487--526}.
\bibitem[{{Dore, G.} and {Venni, A.}(1987)}]{DoreVenni}
\bibinfo{author}{{Dore, G.}}, \bibinfo{author}{{Venni, A.}},
  \bibinfo{year}{1987}.
\newblock \bibinfo{title}{On the closedness of the sum of two closed
  operators}.
\newblock \bibinfo{journal}{Math. Z.} \bibinfo{volume}{196},
  \bibinfo{pages}{189--201}.
\bibitem[{Feireisl(2003)}]{Fei2003}
\bibinfo{author}{Feireisl, E.}, \bibinfo{year}{2003}.
\newblock \bibinfo{title}{On the motion of rigid bodies in a viscous
  compressible fluid}.
\newblock \bibinfo{journal}{Arch. Ration. Mech. Anal.} \bibinfo{volume}{167},
  \bibinfo{pages}{281--308}.
\bibitem[{{Feireisl, E.}(2003)}]{Fe2003}
\bibinfo{author}{{Feireisl, E.}}, \bibinfo{year}{2003}.
\newblock \bibinfo{title}{On the motion of rigid bodies in a viscous
  incompressible fluid}.
\newblock \bibinfo{journal}{J. Evol. Equ.} \bibinfo{volume}{3},
  \bibinfo{pages}{419--441}.
\bibitem[{{Fujita, H.} and {Sauer, N.}(1970)}]{FuSa}
\bibinfo{author}{{Fujita, H.}}, \bibinfo{author}{{Sauer, N.}},
  \bibinfo{year}{1970}.
\newblock \bibinfo{title}{On existence of weak solutions of the
  {N}avier-{S}tokes equations in regions with moving boundaries}.
\newblock \bibinfo{journal}{J. Fac. Sci. Univ. Tokyo Sect. I}
  \bibinfo{volume}{17}, \bibinfo{pages}{403--420}.
\bibitem[{Galdi(2011)}]{Ga}
\bibinfo{author}{Galdi, G.P.}, \bibinfo{year}{2011}.
\newblock \bibinfo{title}{An Introduction to the Mathematical Theory of the
  {N}avier-{S}tokes Equations: Steady-State Problems}.
\newblock \bibinfo{edition}{Second} ed., \bibinfo{publisher}{Springer-Verlag,
  New York}.
\bibitem[{{G\'{e}rard-Varet, D.} et~al.(2015){G\'{e}rard-Varet, D.},
  {Hillairet, M.} and {Wang, C.}}]{Hillairet2015}
\bibinfo{author}{{G\'{e}rard-Varet, D.}}, \bibinfo{author}{{Hillairet, M.}},
  \bibinfo{author}{{Wang, C.}}, \bibinfo{year}{2015}.
\newblock \bibinfo{title}{The influence of boundary conditions on the contact
  problem in a 3{D} {N}avier-{S}tokes flow}.
\newblock \bibinfo{journal}{J. Math. Pures Appl. (9)} \bibinfo{volume}{103},
  \bibinfo{pages}{1--38}.
\bibitem[{{Geymonat, G.} and {Suquet, P.}(1986)}]{Geymonat}
\bibinfo{author}{{Geymonat, G.}}, \bibinfo{author}{{Suquet, P.}},
  \bibinfo{year}{1986}.
\newblock \bibinfo{title}{Functional spaces for {N}orton-{H}off materials}.
\newblock \bibinfo{journal}{Math. Methods Appl. Sci.} \bibinfo{volume}{8},
  \bibinfo{pages}{206--222}.
\bibitem[{{Glass, O.} and {Sueur, F.}(2015)}]{Sueur2015}
\bibinfo{author}{{Glass, O.}}, \bibinfo{author}{{Sueur, F.}},
  \bibinfo{year}{2015}.
\newblock \bibinfo{title}{Uniqueness results for weak solutions of
  two-dimensional fluid-solid systems}.
\newblock \bibinfo{journal}{Arch. Ration. Mech. Anal.} \bibinfo{volume}{218},
  \bibinfo{pages}{907--944}.
\bibitem[{{Grandmont, C.} and {Maday, Y.}(2000)}]{GrMa}
\bibinfo{author}{{Grandmont, C.}}, \bibinfo{author}{{Maday, Y.}},
  \bibinfo{year}{2000}.
\newblock \bibinfo{title}{Existence for an unsteady fluid-structure interaction
  problem}.
\newblock \bibinfo{journal}{M2AN Math. Model. Numer. Anal.}
  \bibinfo{volume}{34}, \bibinfo{pages}{609--636}.
\bibitem[{{Gunzburger, M.~D.} et~al.(2000){Gunzburger, M.~D.}, {Lee, H.~-C.}
  and {Seregin, G.~A.}}]{gunzburger}
\bibinfo{author}{{Gunzburger, M.~D.}}, \bibinfo{author}{{Lee, H.~-C.}},
  \bibinfo{author}{{Seregin, G.~A.}}, \bibinfo{year}{2000}.
\newblock \bibinfo{title}{Global existence of weak solutions for viscous
  incompressible flows around a moving rigid body in three dimensions}.
\newblock \bibinfo{journal}{J. Math. Fluid Mech.} \bibinfo{volume}{2},
  \bibinfo{pages}{219--266}.
\bibitem[{{Hillairet, M.}(2007)}]{Hillairet}
\bibinfo{author}{{Hillairet, M.}}, \bibinfo{year}{2007}.
\newblock \bibinfo{title}{Lack of collision between solid bodies in a 2{D}
  incompressible viscous flow}.
\newblock \bibinfo{journal}{Comm. Partial Differential Equations}
  \bibinfo{volume}{32}, \bibinfo{pages}{1345--1371}.
\bibitem[{{Hillairet, M.} and {Takahashi, T.}(2009)}]{Hillairet2009}
\bibinfo{author}{{Hillairet, M.}}, \bibinfo{author}{{Takahashi, T.}},
  \bibinfo{year}{2009}.
\newblock \bibinfo{title}{Collisions in three-dimensional fluid structure
  interaction problems}.
\newblock \bibinfo{journal}{SIAM J. Math. Anal.} \bibinfo{volume}{40},
  \bibinfo{pages}{2451--2477}.
\bibitem[{{Karpov, B.~G.}(1962)}]{Ka}
\bibinfo{author}{{Karpov, B.~G.}}, \bibinfo{year}{1962}.
\newblock \bibinfo{title}{Experimental observations of the dynamic behavior of
  liquid filled shell}.
\newblock \bibinfo{howpublished}{BRL Report No. 1171, Aberdeen Proving Ground,
  Md}.
\bibitem[{Kato(1966)}]{Kato}
\bibinfo{author}{Kato, T.}, \bibinfo{year}{1966}.
\newblock \bibinfo{title}{Perturbation Theory for Linear Operators}.
\newblock \bibinfo{publisher}{Springer-Verlag, Berlin-New York}.
\bibitem[{{K\"{o}hne, M.} et~al.(2010){K\"{o}hne, M.}, {Pr\"{u}ss, J.} and
  {Wilke, M.}}]{KoPrWi}
\bibinfo{author}{{K\"{o}hne, M.}}, \bibinfo{author}{{Pr\"{u}ss, J.}},
  \bibinfo{author}{{Wilke, M.}}, \bibinfo{year}{2010}.
\newblock \bibinfo{title}{On quasilinear parabolic evolution equations in
  weighted {$L_p$}-spaces}.
\newblock \bibinfo{journal}{J. Evol. Equ.} \bibinfo{volume}{10},
  \bibinfo{pages}{443--463}.
\bibitem[{{Kopachevsky, N.~D.} and {Krein, S.~G.}(2000)}]{KoKr}
\bibinfo{author}{{Kopachevsky, N.~D.}}, \bibinfo{author}{{Krein, S.~G.}},
  \bibinfo{year}{2000}.
\newblock \bibinfo{title}{Operator Approach to Linear Problems of
  Hydrodynamics, Vol.2: Nonself-Adjoint Problems for Viscous Fluids}.
\newblock \bibinfo{publisher}{Birkh\"auser Verlag, Basel-Boston-Berlin}.
\bibitem[{{LeCrone, J.} et~al.(2014){LeCrone, J.}, {Pr\"{u}ss, J.} and {Wilke,
  M.}}]{LePrWi}
\bibinfo{author}{{LeCrone, J.}}, \bibinfo{author}{{Pr\"{u}ss, J.}},
  \bibinfo{author}{{Wilke, M.}}, \bibinfo{year}{2014}.
\newblock \bibinfo{title}{On quasilinear parabolic evolution equations in
  weighted {$L_p$}-spaces {II}}.
\newblock \bibinfo{journal}{J. Evol. Equ.} \bibinfo{volume}{14},
  \bibinfo{pages}{509--533}.
\bibitem[{Lunardi(1995)}]{lunardi}
\bibinfo{author}{Lunardi, A.}, \bibinfo{year}{1995}.
\newblock \bibinfo{title}{Analytic semigroups and optimal regularity in
  parabolic problems}.
\newblock Modern Birkh\"{a}user Classics,
  \bibinfo{publisher}{Birkh\"{a}user/Springer Basel AG, Basel}.
\newblock \bibinfo{note}{[2013 reprint of the 1995 original]}.
\bibitem[{Mazzone(2012)}]{Ma}
\bibinfo{author}{Mazzone, G.}, \bibinfo{year}{2012}.
\newblock \bibinfo{title}{A mathematical analysis of the motion of a rigid body
  with a cavity containing a {N}ewtonian fluid}.
\newblock Ph.D. thesis. Universit\`a del Salento.
\bibitem[{Mazzone(2016)}]{Ma2}
\bibinfo{author}{Mazzone, G.}, \bibinfo{year}{2016}.
\newblock \bibinfo{title}{On the dynamics of a rigid body with cavities
  completely filled by a viscous liquid}.
\newblock Ph.D. thesis. University of Pittsburgh.
\bibitem[{{Mazzone, G.} et~al.(2019a){Mazzone, G.}, {Pr{\"u}ss, J.} and
  {Simonett, G.}}]{MaPrSi}
\bibinfo{author}{{Mazzone, G.}}, \bibinfo{author}{{Pr{\"u}ss, J.}},
  \bibinfo{author}{{Simonett, G.}}, \bibinfo{year}{2019}a.
\newblock \bibinfo{title}{A maximal regularity approach to the study of motion
  of a rigid body with a fluid-filled cavity}.
\newblock \bibinfo{journal}{J. Math. Fluid Mech.} \bibinfo{volume}{21},
  \bibinfo{pages}{44}.
\bibitem[{{Mazzone, G.} et~al.(2019b){Mazzone, G.}, {Pr{\"u}ss, J.} and
  {Simonett, G.}}]{MaPrSi19}
\bibinfo{author}{{Mazzone, G.}}, \bibinfo{author}{{Pr{\"u}ss, J.}},
  \bibinfo{author}{{Simonett, G.}}, \bibinfo{year}{2019}b.
\newblock \bibinfo{title}{On the motion of a fluid-filled rigid body with
  {N}avier boundary conditions}.
\newblock \bibinfo{journal}{{SIAM J. Math. Anal.}} \bibinfo{volume}{51},
  \bibinfo{pages}{1582--1606}.
\bibitem[{{Proudman, I.}(1956)}]{Proud}
\bibinfo{author}{{Proudman, I.}}, \bibinfo{year}{1956}.
\newblock \bibinfo{title}{The almost-rigid rotation of viscous fluid between
  concentric spheres}.
\newblock \bibinfo{journal}{J. Fluid Mech.} \bibinfo{volume}{1},
  \bibinfo{pages}{505--516}.
\bibitem[{{Pr\"{u}ss, J.} and {Simonett, G.}(2004)}]{PrSi04}
\bibinfo{author}{{Pr\"{u}ss, J.}}, \bibinfo{author}{{Simonett, G.}},
  \bibinfo{year}{2004}.
\newblock \bibinfo{title}{Maximal regularity for evolution equations in
  weighted {$L_p$}-spaces}.
\newblock \bibinfo{journal}{Arch. Math. (Basel)} \bibinfo{volume}{82},
  \bibinfo{pages}{415--431}.
\bibitem[{{Pr\"uss, J.} and {Simonett, G.}(2016)}]{PrSi}
\bibinfo{author}{{Pr\"uss, J.}}, \bibinfo{author}{{Simonett, G.}},
  \bibinfo{year}{2016}.
\newblock \bibinfo{title}{Moving interfaces and quasilinear parabolic evolution
  equations}.
\newblock \bibinfo{publisher}{Birkh\"auser Basel}.
\bibitem[{{Pr{\"u}ss, J.} et~al.(2018){Pr{\"u}ss, J.}, {Simonett, G.} and
  {Wilke, M.}}]{Pruss2018}
\bibinfo{author}{{Pr{\"u}ss, J.}}, \bibinfo{author}{{Simonett, G.}},
  \bibinfo{author}{{Wilke, M.}}, \bibinfo{year}{2018}.
\newblock \bibinfo{title}{Critical spaces for quasilinear parabolic evolution
  equations and applications}.
\newblock \bibinfo{journal}{J. Differential Equations} \bibinfo{volume}{264},
  \bibinfo{pages}{2028 -- 2074}.
\bibitem[{{Pr{\"u}ss, J.} and {Wilke, M.}(2017)}]{PrWi}
\bibinfo{author}{{Pr{\"u}ss, J.}}, \bibinfo{author}{{Wilke, M.}},
  \bibinfo{year}{2017}.
\newblock \bibinfo{title}{{Addendum to the paper ``On quasilinear parabolic
  evolution equations in weighted $L_p$-spaces II}}.
\newblock \bibinfo{journal}{J. Evol. Equ.} , \bibinfo{pages}{1--8}.
\bibitem[{{Pr\"{u}ss, J.} and {Wilke, M.}(2018)}]{PrWi18}
\bibinfo{author}{{Pr\"{u}ss, J.}}, \bibinfo{author}{{Wilke, M.}},
  \bibinfo{year}{2018}.
\newblock \bibinfo{title}{On critical spaces for the {N}avier-{S}tokes
  equations}.
\newblock \bibinfo{journal}{J. Math. Fluid Mech.} \bibinfo{volume}{20},
  \bibinfo{pages}{733--755}.
\bibitem[{{San Mart\'in, J.~A.} et~al.(2002){San Mart\'in, J.~A.},
  {Starovoitov, V.} and {Tucsnak, M. }}]{Tuc}
\bibinfo{author}{{San Mart\'in, J.~A.}}, \bibinfo{author}{{Starovoitov, V.}},
  \bibinfo{author}{{Tucsnak, M. }}, \bibinfo{year}{2002}.
\newblock \bibinfo{title}{Global weak solutions for the two-dimensional motion
  of several rigid bodies in an incompressible viscous fluid}.
\newblock \bibinfo{journal}{Arch. Ration. Mech. Anal.} \bibinfo{volume}{161},
  \bibinfo{pages}{113--147}.
\bibitem[{Sather(1963)}]{sather}
\bibinfo{author}{Sather, J.O.}, \bibinfo{year}{1963}.
\newblock \bibinfo{title}{The initial-boundary value problem for the
  {N}avier-{S}tokes equations in regions with moving boundaries}.
\newblock Ph.D. thesis. University of Minnesota.
\bibitem[{{Sauer, N.}(1988)}]{Sau}
\bibinfo{author}{{Sauer, N.}}, \bibinfo{year}{1988}.
\newblock \bibinfo{title}{The steady state {N}avier--{S}tokes equations for
  incompressible flows with rotating boundaries}.
\newblock \bibinfo{journal}{Proc. Roy. Soc. Edinburgh Sect. A}
  \bibinfo{volume}{110}, \bibinfo{pages}{93--99}.
\bibitem[{{Schopp, R.} and {Colin De Verdi\`ere, A.}(1997)}]{ScCDV}
\bibinfo{author}{{Schopp, R.}}, \bibinfo{author}{{Colin De Verdi\`ere, A.}},
  \bibinfo{year}{1997}.
\newblock \bibinfo{title}{Taylor columns between concentric spheres}.
\newblock \bibinfo{journal}{Geophys. Astrophys. Fluid Dyn.}
  \bibinfo{volume}{86}, \bibinfo{pages}{43--73}.
\bibitem[{Serrin(1963)}]{Serrin}
\bibinfo{author}{Serrin, J.}, \bibinfo{year}{1963}.
\newblock \bibinfo{title}{The initial value problem for the {N}avier-{S}tokes
  equations}.
\newblock \bibinfo{journal}{Nonlinear Problems Proc. Sympos., Madison, Wis.,
  1963} , \bibinfo{pages}{69--98}.
\bibitem[{Temam(1979)}]{Temam}
\bibinfo{author}{Temam, R.}, \bibinfo{year}{1979}.
\newblock \bibinfo{title}{{N}avier-{S}tokes Equations: Theory \& Numerical
  Analysis}. volume~\bibinfo{volume}{2} of \textit{\bibinfo{series}{Studies in
  Mathematics and Its Applications}}.
\newblock \bibinfo{publisher}{North-Holland}.
\bibitem[{Temam(1985)}]{temam1985}
\bibinfo{author}{Temam, R.}, \bibinfo{year}{1985}.
\newblock \bibinfo{title}{Mathematical Problems in Plasticity}.
\newblock \bibinfo{publisher}{Gauthier-Villars}.
\bibitem[{{Vanyo, J.~P.}(2004)}]{Va2004}
\bibinfo{author}{{Vanyo, J.~P.}}, \bibinfo{year}{2004}.
\newblock \bibinfo{title}{Core-mantle relative motion and coupling}.
\newblock \bibinfo{journal}{Geophys. J. Internat.} \bibinfo{volume}{158},
  \bibinfo{pages}{470--478}.
\bibitem[{Yosida(1995)}]{Yosida}
\bibinfo{author}{Yosida, K.}, \bibinfo{year}{1995}.
\newblock \bibinfo{title}{Functional Analysis}.
\newblock Classics in Mathematics. \bibinfo{edition}{sixth} ed.,
  \bibinfo{publisher}{Springer, Berlin Heidelberg}.
\bibitem[{{Zhukovskii, N.~Ye.}()}]{Zh}
\bibinfo{author}{{Zhukovskii, N.~Ye.}}, .
\newblock \bibinfo{title}{On the motion of a rigid body with cavities filled
  with a homogeneous liquid drop}.
\newblock \bibinfo{journal}{Zh. Fiz.-Khim. Obs. physics part}
  \bibinfo{volume}{17 (1885), pp.~81--113; 17 (1885), pp.~145--199; 17 (1885),
  pp.~231--280. Reprinted in his Selected Works, 1 (Gostekhizdat, Moscow,
  1948), pp.~31--152}.

\end{thebibliography}

\end{document}